

\newif\ifndjflsub

\ifndjflsub
\documentclass{ndjflart}
\else
\documentclass{article}
\usepackage[margin=3.5cm]{geometry}
\newenvironment{acks}{\subsection*{Acknowledgements}}{}
\fi


 \usepackage[T1]{fontenc}
 \usepackage{bbold}
\usepackage{tgtermes}
\usepackage[scaled=.92]{helvet}
\ifndjflsub \setoptfont{enc={T1},fam={pop}} \fi 

\usepackage{amsthm,amsmath,amssymb}
\usepackage[shortlabels]{enumitem}
\usepackage{enumitem}
\usepackage{makebox}
\usepackage{mathrsfs}
\usepackage[numbers]{natbib}  
\usepackage[colorlinks,citecolor=blue,urlcolor=blue,breaklinks]{hyperref}  
\usepackage[capitalise]{cleveref}
\usepackage{etoolbox,mathtools}

\usepackage[nottoc,notlot,notlof]{tocbibind}
\usepackage{graphicx,array}
\usepackage[matrix,arrow,curve,rotate]{xy}

\ifndjflsub \startlocaldefs \fi
\ifndjflsub \artstatus{am} \fi 
\ifndjflsub  \fi

\Crefname{propenumi}{Proposition}{Propositions}
\AtBeginEnvironment{proposition}{%
    \crefalias{enumi}{proposition}%
    \setlist[enumerate,1]{
        label={\textit{(\alph*)}},
        ref={\alph*}
    }%
  }

\AtBeginEnvironment{proof}{%
    \setlist[enumerate,1]{
        label={\textit{(\alph*)}},
        ref={\alph*}
    }%
  }

\AtBeginEnvironment{theorem}{%
    \crefalias{enumi}{theorem}%
    \setlist[enumerate,1]{
      label={\textit{(\alph*)}},
        ref={\alph*}
    }%
}
  
\Crefname{defenumi}{Definition}{definitions}
\AtBeginEnvironment{definition}{%
    \crefalias{enumi}{definition}%
    \setlist[enumerate,1]{
        label={\textit{(\alph*)}},
        ref={\alph*}
    }%
  }

\Crefname{corenumi}{Corollary}{corollaries}
\AtBeginEnvironment{corollary}{%
    \crefalias{enumi}{corollary}%
    \setlist[enumerate,1]{
        label={\textit{(\alph*)}},
        ref={\alph*}
    }%
  }

\newtheorem{theorem}{Theorem}[section]
\newtheorem{proposition}[theorem]{Proposition}

\newtheorem{corollary}[theorem]{Corollary} 
\theoremstyle{definition}
\newtheorem{definition}[theorem]{Definition}

\theoremstyle{remark}
\newcommand{\wellfound}{well-founded}
\newcommand{\biginfty}{\infty\hspace{-0.95em} \infty}
\newcommand{\subbiginfty}{\infty\hspace{-0.825em} \infty}

\newcommand{\subbigginfty}{\subbiginfty}
\newcommand{\ddset}{\mathop{\rotatebox[origin=c]{270}{\(\twoheadrightarrow\)}}}
\newcommand{\waybel}[1]{\ddset{#1}}
\newcommand{\subwaybel}[1]{\text{\raisebox{-0.3ex}{$\ddset$}}{#1}}
\newcommand{\graphof}[1]{\mathsf{graph}({#1})}

\newtheorem{example}[theorem]{Example}
\newcommand{\wiscres}[2]{{#1}}
\newcommand{\gamdltof}[2]{\Gamma_{({#1},{#2})}}
\newcommand{\gamdlt}{\gamdltof{D}{<}}
\newcommand{\delwlof}[3]{\Delta_{({#1},{#2})}^{#3}}
\newcommand{\delwl}{\delwlof{D}{<}{L}}
\newcommand{\rankf}{\rho}
\newcommand{\ordtype}[1]{\text{order-type}{#1}}
\newcommand{\full}{Full}
\newcommand{\jlt}{J}
\newcommand{\sbwf}{\wellfound{} set-based}

\newcommand{\Qwide}{Quasiwide}
\newcommand{\qwide}{quasiwide}

\newcommand{\jltstar}{J^{*}}
\newcommand{\extord}{\Ord_{\infty}}
\newcommand{\copair}[2]{\mathsf{copair}({#1},{#2})}

\newcommand{\jltplus}{J^{+}}
\newcommand{\bbI}{\mathbb{M}}
\newcommand{\dof}[2]{D_{{#2},{#1}}}
\newcommand{\fund}[1]{{#1}_{\mathsf{small}}}

\newcommand{\alestar}{\aleph^{*}}
\newcommand{\ob}{\mathsf{ob}\ }
\newcommand{\cfof}[1]{\mathsf{cf}({#1})}
\newcommand{\ppart}[1]{\part{#1}}
\newcommand{\psection}[1]{\section{#1}}
\newcommand{\psubsection}[1]{\subsection{#1}}
\newcommand{\standcoll}{standard ordered collection}

\newcommand{\iof}[2]{i_{{#1},{#2}}}
\newcommand{\algof}[1]{R({#1})}

\newcommand{\makelitfam}[1]{\overline{#1}}
\newcommand{\makefam}[1]{\overline{#1}}
\newcommand{\makerub}[1]{\dot{#1}}
\newcommand{\compsof}[1]{\chi({#1})}

\newcommand{\arsof}[1]{\mathsf{Arit}({#1})}
\newcommand{\setb}{set-based}
\newcommand{\fullsetb}{iteratively set-based}
\newcommand{\Sset}{Set}

\newcommand{\setgen}{\Sset{} Generation}
\newcommand{\classchoice}{Collective Choice}



\newcommand{\cwise}[1]{\widehat{#1}}
\newcommand{\cowise}[1]{\widetilde{#1}}

\newcommand{\jspl}{$^{*}$}
\newcommand{\jwew}{$^{\dag}$}
\newcommand{\spl}{$\,^{*}$}
\newcommand{\wew}{$\,^{\dag}$}
\newcommand{\ptset}{\text{Powerset}}

\newcommand{\rnkof}[2]{\overline{#1}}

\newcommand{\upto}[1]{\overline{#1}}

\newcommand{\predof}[1]{\downarrow\! {#1}}

\newcommand{\limso}{\mathsf{LimPt}}
\newcommand{\limsof}[1]{\limso({#1})}

\newcommand{\prefof}[1]{\mathsf{Pref}({#1})}

\newcommand{\ovresof}[1]{\mathsf{O}_{\catr}({#1})}

\newcommand{\ovreso}{\mathsf{O}_{\catr}}

\newcommand{\classfamof}[1]{\mathsf{ClassFam}({#1})}
\newcommand{\classfamcat}[1]{\mathbf{ClassFam}({#1})}

\newcommand{\injcfof}[1]{\mathsf{InjClassFam}({#1})}
\newcommand{\injfof}[1]{\mathsf{InjFam}({#1})}

\newcommand{\bfclass}{\mathbf{Class}}

\newcommand{\classfamoo}[2]{\partialprod{#2}{#1}}
\newcommand{\funcok}{\acute{K}}
\newcommand{\subclassof}[1]{\mathsf{Sub}({#1})}
\newcommand{\class}{\mathsf{Class}}

\newcommand{\mayg}{}
\newcommand{\pmiplong}[1]{\makebox*{\ptset $+$ Wide \Supgeneration{}}[l]{${#1}$}}
\newcommand{\pmipblong}[1]{\makebox*{\ptset $+$ Broad \Supgeneration{}}[l]{${#1}$}}

\newcommand{\twodots}{\mathinner {\ldotp \ldotp}}
\newcommand{\id}{\mathsf{id}}
\newcommand{\gamfamcatr}{\Delta_{\catr}}

\newcommand{\incs}{\mathsf{Maybe}}
\newcommand{\incof}[1]{\mathsf{Maybe}\,{#1}}
\newcommand{\const}[1]{\mathsf{Const}_{#1}}
\newcommand{\constalpha}{\const{\alpha}}
\newcommand{\constk}{\const{\catk}}
\newcommand{\constl}{\const{\catl}}
\newcommand{\regord}{\mathsf{Reg}}

\newcommand{\limclass}{\mathsf{Lim}}
\newcommand{\ssup}{\mathsf{ssup}}
\newcommand{\supo}{\bigvee}
\newcommand{\supclosed}{supclosed}
\newcommand{\supcomplete}{supcomplete}
\newcommand{\Supgeneration}{Supgeneration}
\newcommand{\supgenerat}{supgenerat}
\newcommand{\noidea}{}

\newcommand{\symb}{\mathsf{Pred}}

\newcommand{\psetinh}{\mathcal{P}_{\mathsf{inh}}}

\newcommand{\ccolc}{\! : \!}

\newcommand{\derivsof}[1]{\mathsf{Deriv}_{#1}}

\newcommand{\assumi}[1]{\emph{(Assuming {#1}.)}}


\newcommand{\injcomp}{\preccurlyeq}
\newcommand{\surjcomp}{\preccurlyeq^{*}\!}

\newcommand{\inone}{}

\newcommand{\explind}[1]{}
\newcommand{\tzero}{\mathrm{zero}}

\newcommand{\tnat}{\mathrm{nat}}
\newcommand{\tpow}[1]{\mathrm{pow}({#1})}

\newcommand{\tsigma}[2]{\mathrm{sigma}({#1},{#2})}


\newcommand{\allrub}[1]{\mathsf{WideRub}({#1})}
\newcommand{\allbroadrub}[1]{\mathsf{BroadRub}({#1})}
\newcommand{\widerule}[1]{\mathsf{WideRule}({#1})}
\newcommand{\broadrule}[1]{\mathsf{BroadRule}({#1})}

\newcommand{\sig}{\famof{\allsets}}

\newcommand{\wfimpure}{V_{\mathsf{impure}}}
\newcommand{\wfpure}{V_{\mathsf{pure}}}

\newcommand{\Ord}{\mathsf{Ord}}

\newcommand{\Start}{\rStart}
\newcommand{\yBuild}[3]{\rBuild{#1}{\tuple{{#2},{#3}}}}

\newcommand{\elset}[1]{\mathcal{E}({#1})}
\newcommand{\preBuild}[2]{\rBuild{#1}{#2}}
\newcommand{\Succm}[1]{{#1}}

\newcommand {\cate}{\mathcal{E}}

\newcommand{\catb}{\mathcal{B}}
\newcommand{\catd}{\mathcal{D}}

\newcommand{\bbegin}[3]{Narrow}

\newcommand{\ccont}[4]{}
\newcommand{\Zero}{\mathsf{Nothing}}
\newcommand{\Succ}[1]{\mathsf{Just}{#1}}
\newcommand{\Succj}{\mathsf{Just}}

\newcommand{\succof}[1]{\mathsf{S}{#1}}
\newcommand{\Succb}[1]{\mathsf{Just}({#1})}

\newcommand{\cats}{\mathcal{S}}
\newcommand{\set}{\mathbf{Set}}

\newcommand{\famof}[1]{\mathsf{Fam}({#1})}

\newcommand{\catk}{\mathcal{K}}
\newcommand{\catl}{\mathcal{L}}

\newcommand{\catr}{\mathcal{R}}

\newcommand{\mus}{\wide{S}}
\newcommand{\inck}{\mathsf{Maybe}^{\circ}_{K}}
\newcommand{\incf}{\mathsf{Maybe}^{\circ}_{F}}
\newcommand{\incfprime}{\mathsf{Maybe}^{\circ}_{F'}}
\newcommand{\incg}{\mathsf{Maybe}_{G}}
\newcommand{\ovs}{\overline{S}}
\newcommand{\redbroad}[1]{\mathsf{SimpleBroad}({#1})}
\newcommand{\broad}[1]{\mathsf{Broad}({#1})}
\newcommand{\redwide}[1]{\mathsf{SimpleWide}({#1})}
\newcommand{\wide}[1]{\mathsf{Wide}({#1})}

\newcommand{\op}{\mathsf{op}}
\newcommand{\rStart}{\mathsf{Begin}}
\newcommand{\rBuild}[2]{\mathsf{Make}({#1},{#2})}
\newcommand{\summay}{\tesum^{\mathsf{Maybe}}}

\newcommand{\psetset}{\pset}

\newcommand{\transset}{\mathsf{TrSet}}

\newcommand{\xff}[1]{\xf{#1}}
\newcommand{\xf}[1]{\mu^{{#1}}h}

\newcommand{\memreach}[1]{\mathsf{\mathcal{E}}^{*}({#1})}

\newcommand{\memplus}[1]{\mathsf{\mathcal{E}}^{+}({#1})}

\newcommand{\pset}{\mathcal{P}}
\newcommand{\eqdef}{\stackrel{\mbox{\rm {\tiny def}}}{=}}
\newcommand{\iffdef}{\stackrel{\mbox{\rm {\tiny def}}}{\iff}}
\newcommand{\cata}{\ensuremath{\mathcal{A}}}
\newcommand{\catc}{\ensuremath{\mathcal{C}}}
\newenvironment{spaceout}[1]{\begin{displaymath}\setlength{\extrarowheight}{3pt}\begin{array}{#1}}{\end{array}\setlength{\extrarowheight}{0pt}\end{displaymath} \noindent}
\newcommand{\univv}{\mathfrak{U}}

\newcommand{\domof}[1]{\mathsf{Dom}({#1})}
\newcommand{\rangeof}[1]{\mathsf{Range}({#1})}

\newcommand{\nats}{\mathbb{N}}

\newcommand{\setbr}[1]{\{{#1}\}}
\newcommand{\tuple}[1]{\langle {#1} \rangle}

\newcommand{\itinl}{\mathsf{inl}\;}

\newcommand{\itinr}{\mathsf{inr}\;}

\newcommand{\totall}{\mathfrak{T}}
\newcommand{\allsets}{\mathsf{Set}}

\newcommand{\isset}[1]{\mathsf{IsSet}({#1})}

\newcommand{\partialprod}[1]{\prod_{{#1}}^{\hspace{-0.73em}\text{\raisebox{0.15ex}{$\rightharpoonup$}}}}





\newcommand{\qqed}{}

\newcommand{\smin}{\!\in\!}

\newcommand{\bnfgo}{::=\hspace{0.7em}}
\newcommand{\bnf}{\hspace{0.5em} | \hspace{0.5em}}

\newcommand{\tesum}{\textstyle{\sum}}

\newcommand{\existsu}{\exists!}

\ifndjflsub \endlocaldefs \fi

\begin{document}


\ifndjflsub \begin{frontmatter} 
 \title{\emph{Broad Infinity and Generation Principles}} 
 \runtitle{Broad Infinity and Generation Principles}
  \author{\fnms{Paul Blain}
    \snm{Levy}
    \corref{}
    \ead[label=e1]{P.B.Levy@cs.bham.ac.uk}
    \ead[label=u1,url]{http://www.cs.bham.ac.uk/\textasciitilde pbl}
  }
  \address{School of Computer Science\\
    University of Birmingham\\
    Edgbaston\\
    Birmingham\\
    B15 2TT \\
    UNITED KINGDOM \\
    \printead{e1} \\ \printead{u1}}

  \runauthor{P.~B.~Levy}
  
  \else
  \title{Broad Infinity and Generation Principles}
  \author{Paul Blain Levy, University of Birmingham}
  \date{}
  \fi


\bibliographystyle{alpha}
\maketitle

\begin{abstract}
  We introduce Broad Infinity, a new set-theoretic axiom scheme based
  on the slogan ``Every time we construct a new element, we gain a new
  arity.''  It says that three-dimensional trees whose growth is
  controlled by a specified class function form a set.  Such trees are
  called ``broad numbers''.

Assuming AC (the axiom of choice) or at least the weak version known
as WISC (Weakly Initial Set of Covers), we show that Broad
Infinity is equivalent to
  Mahlo's principle, which says that the class of all regular limit ordinals is stationary.
  Assuming AC or WISC, Broad Infinity also yields a convenient principle for
generating a subset of a class using  a ``rubric'' (family of rules); this
directly gives the existence of Grothendieck universes, without
requiring a detour via ordinals.

In the absence of choice, Broad Infinity implies that the derivations
of elements from a rubric form a set; this yields the existence of Tarski-style universes.

Additionally, we reveal a pattern of resemblance between ``Wide'' principles, that are provable in ZFC, and ``Broad''
principles, that go beyond ZFC.

Note: this paper uses a base theory that is weaker than ZF but includes
classical first-order logic and Replacement.
\end{abstract}

\ifndjflsub
\begin{keyword}[class=AMS]
  \kwd[Primary ]{03E10} \kwd{03E25} \kwd[; Secondary ]{03E70} \kwd{03E55}
\end{keyword}

\begin{keyword}
 \kwd{Set theory}  \kwd{Broad Infinity} \kwd{Generation
    Principles}  \kwd{Axiom of Choice}  \kwd{Grothendieck universe}
  \kwd{Tarski-style universe} \kwd{Inaccessible cardinal} \kwd{WISC}
  \kwd{Ordinal} \kwd{Inductive chain} \kwd{Scaffold} \kwd{Rubric} 
  \kwd{Mahlo's principle} \kwd{Lindenbaum number} 
\end{keyword}
\end{frontmatter}
\fi

\tableofcontents

\ppart{Introduction} 

\psection{Broad Infinity vs Mahlo's Principle}  \label{sect:broadvsmahlo}

 \psubsection{Broad Infinity in a Nutshell} \label{sect:nutshell}

 This paper is about a new axiom scheme of set theory, which is easy
 to state.

 First, some preliminaries.  
 For the sake of this introduction, assume either ZF or a variant  
  that allows urelements.  
 We write $\totall$ for the universal class and $\allsets$ for the
 class of all sets; they are the same in ZF. The axiom of choice (AC) is
 not assumed.     

 We must say how to encode ordered pairs and
the like.
\begin{definition}  \label{def:encode}
  Let $C$ be a class.
  \begin{enumerate}
  \item \label{item:encodeop} An  \emph{ordered pair encoding} on $C$ is a binary operation
    $\tuple{-,-} \ccolc C^2 \rightarrow C$ such that, for all $x,y,x',y' \smin C$, if
    $\tuple{x,y} = \tuple{x',y'}$, then $x=x'$ and $y=y'$.
  \item  \label{item:encodeunary} A \emph{unary Dedekind encoding} on $C$
    consists of an element $\Zero \smin C$ and a unary operation
    $\mathsf{Just} \ccolc C \rightarrow C$, such that
    \begin{itemize}
    \item for all $x \smin C$, we have $\Succb{x} \not= \Zero$
     \item  for all $x,x' \smin C$, if $\Succb{x} = \Succb{x'}$, then $x=x'$.
     \end{itemize}
   \item \label{item:encodebinary} A \emph{binary Dedekind encoding} on $C$
    consists of an element $\rStart \smin C$ and a binary operation
    $\mathsf{Make} \ccolc C^2 \rightarrow C$, such that
    \begin{itemize}
    \item for all $x,y \smin C$, we have $\rBuild{x}{y} \not= \rStart$
     \item  for all $x,y,x',y' \smin C$, if $\rBuild{x}{y} =
       \rBuild{x'}{y'}$, then $x=x'$ and $y=y'$.
     \end{itemize}
  \end{enumerate}
\end{definition}
The following encodings are fixed throughout the paper.
 \begin{definition} \hfill \label{def:specificenc}
  \begin{enumerate}
  \item \label{item:specifop} We give an ordered pair encoding on $\totall$ as follows:
    \begin{eqnarray*}
    \tuple{x,y} & \eqdef & \setbr{\setbr{x},\setbr{x,y}}
    \end{eqnarray*}
  \item \label{item:specifunary} We give a unary Dedekind encoding on $\totall$ as follows:
    \begin{eqnarray*}
    \Zero & \eqdef & \emptyset \\ \Succb{x} & \eqdef & \setbr{x}
    \end{eqnarray*}
  \item \label{item:specifbinary} We give a binary Dedekind encoding on $\totall$ as follows:
    \begin{eqnarray*}
    \rStart & \eqdef & \emptyset \\
    \rBuild{x}{y} & \eqdef & \setbr{\setbr{x},\setbr{x,y}}
    \end{eqnarray*}
  \end{enumerate}
\end{definition}

For a class $C$ and set $K$, we write $C^{K}$ for the class of all
functions from $K$ to  $C$. 

The axiom of 
\emph{Infinity} is included
in ZF.  As formulated by Zermelo~\cite{Zermelo:natnumber}, it says that
there is a set $X$ with the following properties:
\begin{itemize}
\item $\Zero \in X$.
\item For any $x \smin X$, we have $\Succb{x} \in X$. 
\end{itemize}
The new axiom scheme of \emph{Simple Broad Infinity} is similar.  It says that, for any
function $F \ccolc \totall \rightarrow \allsets$, there is a set $X$
with the following properties:
\begin{itemize}
\item $\rStart \in X$.
\item For any $x \smin X$ and $y \smin X^{Fx}$,  we have $\rBuild{x}{y}\in X$.
\end{itemize}
Here is a slogan: ``Every time we construct a new element, we gain a new arity.''

ZF extended with this scheme is called \emph{Broad ZF}.  
 The following sections will motivate  
 this extension in light of a previously studied principle.

\psubsection{Regular Limits and Stationary Classes} \label{sect:regstat}

 We begin with some useful notions concerning ordinals.  We write
 $\Ord$ for the class of all
 ordinals and $\limclass$ for the class of all \emph{limit ordinals}---ordinals that are
 neither 0 nor a successor.  An \emph{initial} ordinal is one that is
 not the range of a function from a smaller ordinal; examples are the
 finite ordinals, $\omega$ and $\omega_1$.    (In ZFC, an initial
 ordinal is also called a ``cardinal''.)


 A limit ordinal $\kappa$ 
 is \emph{regular} when, for all $\alpha < \kappa$,
the supremum function
  $\Ord^{\alpha} \rightarrow \Ord$ restricts to a
  function $\kappa^{\alpha} \rightarrow \kappa$.  (See \Cref{sect:regularity} for an alternative definition.)  It follows that
  $\kappa$ is initial, so $\omega$ is the only regular limit ordinal
  that is countable.   We write $\regord$ for the class of all regular
  limit ordinals.

For a function  $F \ccolc \Ord \rightarrow \Ord$, we say that an
limit ordinal $\lambda$ 
is  \emph{$F$-closed} when $F$ restricts to a function 
$\lambda \rightarrow \lambda$.  Here are some examples:
\begin{itemize}
\item Let $\mathsf{S}$ be the successor function.  Every limit ordinal
  is 
  $\mathsf{S}$-closed.
\item For an ordinal $\alpha$, let $\constalpha$ be the constant
  function $\gamma \mapsto \alpha$.  A limit ordinal is
   $\constalpha$-closed iff it is $> \alpha$.
 \item For functions $F,G \ccolc \Ord \rightarrow \Ord$, let $F \vee
   G$ be the pointwise maximum $\gamma \mapsto F(\gamma) \vee
   G(\gamma)$.  A limit ordinal is $(F\vee G)$-closed iff it is 
  both  $F$-closed and $G$-closed.  
\end{itemize}
 A class  of limit ordinals $D$ is \emph{stationary} when, for 
every function $F \ccolc \Ord \rightarrow \Ord$, there is an  
 an 
 $F$-closed member of $D$. (See
 \Cref{sect:unbstat,sect:powstat} for alternative definitions.)  It
 follows that $D$ is unbounded and that, for
 every function  $F \ccolc \Ord \rightarrow \Ord$, there are 
 stationarily many $F$-closed members of $D$.  

\psubsection{Two Principles from the Literature}  \label{sect:twoprinc}

Next we look at two principles that use the above notions.
\begin{itemize}
\item \emph{Mahlo's principle}, also known as ``Ord is Mahlo'', says that $\regord$
  is stationary~\cite{Hamkins:simplemax,Jorgensen:generate,Levy:stronginf,Mayberry:book,Wang:largesets}.  To illustrate its
  power, note that ZFC $+$ Mahlo's principle
  proves that there are stationarily many inaccessible
  cardinals.\footnote{In the absence of AC, there is no
  accepted notion of
  inaccessible.  See~\cite{BlassDimitriouLoewe:inaccnoac} for
  a comparative analysis.} 
  That is because, in ZFC, an inaccessible cardinal is precisely  an 
    uncountable   $F$-closed limit ordinal, where $F$ sends $\alpha$ to $2^{\alpha}$ if
  $\alpha$ is a cardinal and to 0 otherwise.
\item \emph{Blass's axiom}~\cite{Blass:initalg} says merely that $\regord$ is
  unbounded.  It follows from AC, but
  is it provable in ZF alone?  To answer this question, Gitik~\cite{gitik:uncsing}
  showed that, if ZFC $+$ ``Arbitrarily large 
  strongly compact cardinals exist'' is consistent, then so is ZF $+$
  ``Every limit ordinal is the supremum of a strictly
  increasing $\omega$-sequence''.  This
  means that ZF
  cannot even prove the existence of an uncountable regular limit ordinal, let alone prove Blass's axiom.
\end{itemize}

\psubsection{Limitations of Mahlo's Principle} \label{sect:limitmah}

Appealing though Mahlo's principle may be, I consider it 
deficient as an axiom scheme, in two respects.

 Firstly, it does not meet the ZF standard of simplicity.  Each ZF
 axiom, other than Extensionality and Foundation, expresses the idea that
some easily grasped things form a set: the natural numbers
(Infinity), the subsets of a  
 set (Powerset), the elements of a set that satisfy a 
property (Separation),
the images of a set's elements under a function (Replacement), and so forth.
  This is what makes these axioms so compelling. But Mahlo's principle  
  does not do this.  
 
  The second problem is that Mahlo's principle, or indeed any addition to ZF that implies the existence of an
  uncountable regular limit ordinal, seems to be \emph{entangled with choice} in
  light of Gitik's result.
  Admittedly this view is contentious, as some people would try to justify 
  Mahlo's principle via the following choiceless argument: ``For any 
  $F \ccolc \Ord \rightarrow \Ord$, the property of being an 
  $F$-closed regular limit 
 can be reflected down from Absolute Infinity to an ordinal.''   But such
 thinking is avoided in this paper.
 

 \psubsection{Motivating Broad Infinity} \label{sect:motivbi}

In light of the preceding discussion, my primary goal was 
to obtain an axiom scheme that 
\begin{enumerate}
\item \label{item:eqmahlo} is equivalent to Mahlo's
  principle, assuming AC
  \item \label{item:grasp} asserts that some easily grasped
  things form a set
\item \label{item:noreg} does not imply (given only ZF) that an uncountable
regular limit ordinal exists.
\end{enumerate}
To this end, I propose Simple Broad Infinity. Does it meet the requirements?
\begin{enumerate}
\item \label{item:eqnmahloans} Assuming AC,  we shall prove that Simple Broad Infinity is equivalent to Mahlo's
  Principle.  So this requirement is met.
\item \label{item:graspans} Simple Broad Infinity
  asserts, for each function $F \ccolc \totall \rightarrow \allsets$,
  that the class of all {simple $F$-broad numbers} (explained in Section~\ref{sect:broadinftech} below) is a
  set.  Arguably this is ``easily grasped'', but the question is
  subjective and must be left to the reader's judgement.
\item \label{item:noregans} In Broad ZF, I see no way to obtain the
  existence of an uncountable regular limit ordinal.  However,  
  an analogue of Gitik's result is currently lacking.
\end{enumerate}





\psection{Goals and Structure of the Paper} \label{sect:goals}

\psubsection{Plausible vs Useful} \label{sect:plausibleuseful}

Simple Broad Infinity has been designed to be as \emph{plausible} as
possible.  In other words, I aimed to minimize the mental effort needed to believe
it.  This is surely a desirable feature for an axiom scheme.
Furthermore, disentanglement from choice helps to achieve it because, even for a person who finds AC intuitively convincing (as
I do), it is easier to accept one intuition at a time.

My second goal was different: to find an equivalent scheme that is as \emph{useful} as possible.  In other words, I wanted to
minimize the effort needed to apply it.    
In particular,  it should \emph{obviously} imply the existence
of Grothendieck universes, without requiring a detour via notions of 
ordinal or cardinal.  

To this end, I propose a scheme called \emph{Broad Set
  Generation}.  For people who accept AC, this meets the stated
goal.  For those who do not, I offer instead a principle
called \emph{Broad Derivation Set}.  The latter yields the existence of
``Tarski-style'' universes that are sometimes used in the literature~\cite{MartinLoef:inttypetheory}.

\psubsection{Urelements and Non-Well-Founded Membership} \label{sect:urnwf}

In ZF, everything is a set and the membership relation is
\wellfound{}.  But our results also hold in variants of ZF that 
allow urelements and/or non-\wellfound{} membership~\cite{wikipedia:urelement,yao:thesis,wikipedia:nonwellfounded,Aczel:book}.  Making
this clear is the third goal.  

\psubsection{Weak Choice Principle} \label{sect:weakchoice}

Although---as stated above---some of our results depend on AC, the
full strength of this axiom is not needed.  More precisely, a weak
form of choice known as WISC (Weakly Initial Set of Covers) suffices
for our results.  Explaining this fact is the fourth goal.

Caveat: we shall see different versions of WISC, and care must be taken to use an appropriate one. In ZF, they are all equivalent.


\psubsection{Wide vs Broad} \label{sect:widevsbroad}

We give the name ``Broad'' to the principles studied in this paper that go beyond ZFC.  It
turns out that each of them has a   
ZFC-provable counterpart that we call ``Wide''.  For example,
Mahlo's principle is Broad, and its Wide counterpart is Blass's
axiom.

The fifth goal is to convey this pattern of resemblance, which is
depicted in \Cref{fig:diagsub}, a summary of the results in the
paper (using a base theory weaker than ZF).  The rows within each block are equivalent,  
 and each arrow represents inclusion of theories---i.e., reverse
 implication.  The Wide principles appear on the left and the
 corresponding Broad principles on the right.

 \begin{figure}
   \noindent \textbf{Without assuming the Axiom of Choice}
{\small \begin{displaymath} 
    \xymatrix@R=1.5pc@C=5em{
      {
        \begin{array}{l}
          \pmiplong{\ptset + \text{Infinity}}  \\
              \text{Wide Infinity\spl{}} \\
          \text{Wide Derivation Set\wew{}} \\
          \text{Injective Wide \setgen{}\wew{}}
         \end{array}
      } \ar[r] \ar[d]
     &{\begin{array}{l}
         \pmipblong{\noidea} \\
                  \text{Broad Infinity\spl{}} \\
          \text{Broad Derivation Set} \\
         \text{Injective Broad \setgen{}} 
        \end{array}
      }  \ar[d] \\ 
      {
        \begin{array}{l}
          \pmiplong{\ptset +\text{Wide \Supgeneration{}\spl{}}\quad} \\
          \ptset + \text{Blass's Axiom} 
          \\
          \text{Wide \setgen{}\wew}
        \end{array}
      }  \ar[r] & 
        {
          \begin{array}{l}
                \pmipblong{\ptset +\text{Broad \Supgeneration{}\spl{}}\quad} \\
            \ptset + \text{Mahlo's Principle} 
            \\
          \text{Broad \setgen{}}
        \end{array} 
      }
    } \end{displaymath} }
\noindent \textbf{Assuming the Axiom of Choice or at least WISC}
{\small \begin{displaymath} 
    \xymatrix@R=1pc@C=5em{
      {
        \begin{array}{l}
          \pmiplong{\ptset + \text{Infinity}}  \\
              \text{Wide Infinity\spl} \\
          \text{Wide Derivation Set\wew} \\
            \pmiplong{\ptset +\text{Wide \Supgeneration{}\spl} \,} \\
          \ptset + \text{Blass's Axiom} 
          \\
          \text{Wide \setgen{}\wew} 
         \end{array}
      } \ar[r] 
     &{\begin{array}{l}
         \pmipblong{\noidea} \\
                  \text{Broad Infinity\spl} \\
          \text{Broad Derivation Set} \\
              \pmipblong{\ptset +\text{Broad \Supgeneration{}\spl}} \\
            \ptset + \text{Mahlo's Principle} 
            \\
         \text{Broad \setgen{}} 
        \end{array}
      } 
    } \end{displaymath} }
\jspl{} The Simple and \full{} versions are equivalent.

\jwew{} The Wide and \Qwide{} versions are equivalent.

\caption{Diagram of theories, each extending the base theory} \label{fig:diagsub}
\end{figure}

\psubsection{Summary of Goals} \label{sect:summgoals}

To summarize the previous sections, our goals are as follows.
\begin{enumerate}
\item \label{item:plaus} To give a simple and plausible axiom scheme, disentangled
  from choice, that is equivalent over ZFC to
  Mahlo's principle.  \ \textbf{Solution} \ Simple Broad Infinity.
\item \label{item:useful} To give an equivalent principle that is 
  convenient for applications.  \ \textbf{Solution} \ Broad \setgen{} for
  those who accept AC, and Broad
  Derivation Set for those who do not.
 \item \label{item:urnwf} To show that our results hold even when urelements
   and non-\wellfound{} membership are allowed.
  \item \label{item:wiscgoal} To show that, for the results that rely on AC, a weak choice
    principle suffices.
   \item \label{item:widebroad} To convey the resemblance between Wide principles (which are
     provable in ZFC) and Broad principles (which are not, provided ZF is consistent).  
 \end{enumerate}
Throughout the paper, we use a base theory that includes classical
first-order logic, and do not consider the issue of logical complexity.    Other versions of set theory are left to future work.

 \psubsection{Related Work} \label{sect:relwork}

Many formulations of Mahlo's principle have been 
studied~\cite{Jorgensen:generate,Mayberry:book,Levy:stronginf,Montague:twocontribs,Dowd:somenewaxioms},
and variations have been given for type
theory~\cite{Rathjen:superjump,Setzer:mlttonemahlo} and Explicit
Mathematics~\cite{KahleSetzer:extendedpredmahlo}.  Other principles
have been considered that are equiconsistent with Mahlo's principle~\cite{Hamkins:simplemax,Mathias:happyfam}.

Another related topic---which inspired the Broad Derivation Set
principle---is  the treatment of
``induction recursion'' in type
theory~\cite{DybjerSetzer:indindrec,GhaniHancock:contmonir}.  It is used in the proof assistant Agda, allows the
formation of Tarski-style universes (as in \Cref{sect:tarski}),
and was modelled in~\cite{DybjerSetzer:indindrec} using a Mahlo
cardinal.

\psubsection{Structure of Paper} \label{sect:structpaper}

Before treating the wide and broad principles, the paper presents
various foundational concepts in \cref{part:founds}, beginning in \Cref{sect:basicsets} with an introduction to
sets and classes.  \Cref{sect:wfscaffolds} treats
\wellfound{}ness, and gives a way to generate subclasses and
partial functions; this is 
is used throughout the paper, and especially to formulate the Derivation
Set principles.  Next, \cref{sect:basicord} is devoted to ordinals, and explains how
to use an inductive chain to obtain a least prefixpoint.  Lastly,
\cref{sect:cattheory} is devoted to category theory, notably the concept of an initial algebra.

\Cref{part:widebroadsets} is devoted to those wide and broad principles that are
concerned with sets and rubrics (not ordinals), beginning in \Cref{sect:infprin} with the wide and
broad infinity principles.  This is followed in \cref{sect:rubrics} by
the useful principles  of 
\setgen{} and Derivation Set, with restricted versions of these
principles considered in \cref{sect:specialrub}.  In  \cref{sect:provesetgen}, we see how
to deduce Set Generation principles from Derivation Set principles, by
either imposing an injectivity condition or assuming AC or WISC.

\Cref{part:widebroadord} presents the wide and broad principles for ordinals, beginning
in \cref{sect:supgen} with ``supgeneration'' principles
that connect the world of sets to that of ordinals.  \Cref{sect:lind} explains the concept of
Lindenbaum numbers, which is known from the literature on choiceless
mathematics.   This allows us in 
\Cref{sect:mahlo}  to 
develop Mahlo's principle and establish all its relationships.
Lastly, \Cref{sect:powstat} presents the traditional use of Mahlo's
principle (in a class setting) to prove the
existence of various kinds of ordinal.

\Cref{part:wrap} wraps up the paper by summarizing the contributions 
and suggesting further work.

Some readers may just want to see the ZFC proof that Simple Broad
Infinity is equivalent to Mahlo's principle.  This is divided into
several steps:
\begin{itemize}
\item Simple Broad Infinity is equivalent to Full Broad
  Infinity---\cref{prop:broadinf}(\ref{item:broadsimplefull}).
\item Full Broad Infinity implies Broad Derivation Set---\cref{prop:implderiv}(\ref{item:broadimplderiv}).
\item Broad Derivation Set implies Broad \setgen{}---\cref{prop:chainac}(\ref{item:acrange}).  
  Only this step uses AC.
\item Broad \setgen{} implies Full Broad
  Infinity---\cref{prop:setgeninf}(\ref{item:broadsginf}).
\item Broad \setgen{} is equivalent to Broad \Supgeneration{}---\cref{prop:setsup}(\ref{item:broadsetsup}).
\item Broad \Supgeneration{} is equivalent to Mahlo's principle---\cref{prop:equivbound}(\ref{item:equivboundbroad}).
\item Various definitions of stationarity,  each giving a different
  formulation of Mahlo's principle,  are equivalent---\cref{prop:equivstat}.  
\end{itemize}

\ppart{Foundations} \label{part:founds}

\psection{Basic Theory of Sets}
\label{sect:basicsets}
 \psubsection{Our Base Theory} \label{sect:basetheory}

 For this paper, I have chosen a base theory that differs from ZF in
 several ways:
 \begin{itemize}
 \item It allows urelements
   and non-\wellfound{} membership.
 \item It excludes Powerset and 
 Infinity, so that we can examine how these
 axioms relate to other principles.
 \item It allows undefined unary 
   predicate symbols, also known as class variables.
 \end{itemize}
For a given set $\symb$ of predicate symbols, the syntax  
is as follows:
\begin{spaceout}{lll}
  \phi,\psi & \bnfgo & P(x) \bnf  \isset{x} \bnf x \in y \bnf x = y \bnf
  \mathsf{True} \bnf \mathsf{False}  \\
 & & \quad \bnf \neg \phi \bnf \phi
  \wedge \psi  \bnf 
         \phi \vee \psi \bnf \phi \Rightarrow \psi  \bnf
         \forall
              x.\,\phi \bnf \exists x.\,\phi
            \end{spaceout}%
with $P$ ranging over $\symb$.  The formula $\isset{x}$ asserts that $x$ is a
set.   

We define the \emph{base theory} over $\symb$ to be the classical first-order theory
with equality, axiomatized as follows. 
 \begin{itemize}
  \item \label{item:ext} Axiom of \emph{Extensionality}: Any two sets with the same elements are equal.
   \item Axiom of \emph{Inhabitation}: Anything that has an element is a set.   
     \item \label{item:replace} Axiom scheme of \emph{Replacement}: For any set $A$ and binary predicate $F$ 
    such that each $x \smin A$ has a unique $F$-image, there is a
    set $\setbr{F(x) \mid x \smin A}$ of all $F$-images of  
    elements of $A$.
 \item Axiom of \emph{Twoity}: There are sets $0,1,2$ such that $0 =
   \setbr{}$ and $1 = \setbr{0}$ and $2 = \setbr{0,1}$.  
 \item  Axiom of  \emph{Union Set}: For any set of sets $\cata$, there
s   is a set $\bigcup \cata$ of all elements of elements of $\cata$.
 \end{itemize}

{\bf  Henceforth we assume the base theory.}    Pairing and Separation follow via 
  \begin{eqnarray*}
    \setbr{x_0,x_1} & \eqdef & \setbr{x_i \mid i \in \setbr{0,1}} \\
    \setbr{x \smin A \mid P(x)} & \eqdef & \bigcup_{x \in A} \left\{
                                         \begin{array}{ll}
                                           \setbr{x} & \text{ if
                                                       $P(x)$} \\
                                           \emptyset & \text{ otherwise}
                                         \end{array} \right.
  \end{eqnarray*}

\paragraph*{Related work} \quad  For set theory without Powerset,
see~\cite{GitmanHamkins:nopset}.  For set theory without
Infinity, see~\cite{KayeWong:arithset}.

\psubsection{Classes, Functions and Partial Functions} \label{sect:classes}

 Since classes are so important in our story, much of this paper is
 devoted to studying them.  As usual in set theory, a class is
 represented as a predicate formula with
 parameters.  We write $\totall$ for the class of all things, $\allsets$ for the class of all sets, and
 $\mathsf{Ur}$ for that of all urelements (things that are not sets).
 A class $C$ is \emph{inhabited} when it has an element---i.e., is not
 empty.  For $x \smin C$, the phrase 
 ``$x$ is contained in $C$'' mean $x \smin C$. 

 Given classes $A$ and $B$, we write 
 \begin{eqnarray*}
 A \times B & \eqdef & \setbr{\tuple{x,y} \mid x \smin A, y \smin B} \\
A+B & \eqdef & \setbr{\itinl x \mid x \smin A} \cup
\setbr{\itinr y \mid  y \smin B} 
\end{eqnarray*}
where $\tuple{x,y} \eqdef \setbr{\setbr{x},\setbr{x,y}}$ and $\itinl x \eqdef
\tuple{0,x}$ and $\itinr y \eqdef \tuple{1,y}$.  The notation  $A
\subseteq B$ means that $A$ is \emph{included} in (i.e., a subclass of) $B$.
The notation $F
\ccolc A \rightarrow B$ means that $F$ is a function sending each $x
\smin A$ to an element of $B$.

For a function $F$ on a class $A$, the restriction of $F$ to a
subclass $C$ of $A$ is written 
$F \restriction_C$, or simply as $F$ when $C$ is clear from the context.

For a set $K$, a \emph{$K$-tuple} is a function on $K$.  It is
written as $[x_k]_{k \in K}$ and envisaged as a column with $K$
entries.  It is \emph{within} a class $C$ when, for all $k \smin K$,
we have $x_k \in C$.

A \emph{family} consists of a set $I$ and a function $x$ on $I$.  More generally, a \emph{class-family} consists of a class
$I$ and function $x$ on $I$.  It may be written
as $(I,x)$ or as $(x_i)_{i \in I}$.  It is \emph{injective} when the function $x$
is injective, and its \emph{range} is the range of $x$.  It is \emph{within} a class $C$
when, for all $i \smin I$, we have $x_i \in C$.

Given class-families $(x_i)_{i \in I}$ and
$(y_j)_{j \in J}$,  we say that the former is \emph{included} in the latter when $I \subseteq J$ and $x =
y \restriction_I$.  A  \emph{map} $(x_i)_{i \in I} \rightarrow (y_j)_{j \in J}$ is a
  function $f \ccolc I \rightarrow J$ such that, for all $i \smin I$,
  we have $x_i = y_{f(i)}$.  It is an \emph{isomorphism} when $f$ is
  bijective; note that isomorphic class-families have
  the same range.

  The following will be useful.
  \begin{proposition} \label{prop:basicsurj}
 Let $C$ be a class.  Any sets $A,B$ and surjection  $f \ccolc A
\rightarrow B$ yield an injective function $C^f \ccolc C^B \rightarrow C^A$ sending
$[x_b]_{b \in B}$ to $[x_{f(a)}]_{a \in A}$.  Its range
is the class of all $A$-tuples $y$ such that, for
all $a,a' \smin A$ with the same $f$-image, we have $y_{a} =
y_{a'}$.  
\end{proposition}
\begin{proof}
 Straightforward.
\end{proof}

For a class $C$, we write $\psetset C$ for the class of all subsets of
$C$, and $\psetinh C$ for the class of all inhabited subsets of $C$.
We write $\famof{C}$ for the class of all families within $C$, and
$\injfof{C}$ for the class of all injective families within $C$.  For
any set $I$, we write $C^I$ for the class of all $I$-tuples within $C$.

Given a classes $A$ and $B$, a \emph{partial function} $G \ccolc A \rightharpoonup B$ consists of a subclass
$\domof{G}$ of $A$ and a function $\overline{G} \ccolc \domof{G} \rightarrow B$.  Put
differently,  it is a class-family $(M,F)$ such that for all
$x \smin M$ we have $x \in A$ and $F(x) \in B$. 

We also speak about ``collections'', although such talk is
informal.

We write $\class$ for the collection of all classes.  Given a class $C$, we write $\subclassof{C}$ for the collection of all
subclasses of $C$.  We write $\classfamof{C}$ for the collection of all
class-families within $C$, and $\injcfof{C}$ for the collection of all
injective class-families within $C$.






Given a class $A$, we may speak of a function $B \ccolc A
\rightarrow \class$, also called an \emph{$A$-tuple of classes} and
written $[B_a]_{a \in A}$.   It is represented as a binary predicate formula $\phi(x,y)$ with
 parameters, so that, for $x \smin A$, we have  $B_x = \setbr{y \mid
   \phi(x,y)}$.  The pair $(A,B)$, also written $(B_x)_{x \in A}$, is
 called a \emph{class-family of classes}, or a \emph{family of
   classes} if $A$ is  a set.

Given a class-family of classes $(B_x)_{x \in A}$,  we form the class
 \begin{eqnarray*}
   \sum_{x \in
    A}B_x & \eqdef & \setbr{\tuple{x,y} \mid x \smin A, y \smin B_x}  
 \end{eqnarray*}
The notation $F \smin \prod_{x \in A} B_x$ means that $F$ is a function on $A$ that sends each $x
  \smin A$ to an element of $B_x$.  
  Likewise, a \emph{partial function} $G \smin \classfamoo{B_x}{x \in A}$ consists of a subclass $\domof{G}$ of $A$ and function
  $\overline{G} \in \prod_{x \in \domof{G}} B_x$.  Put differently, it is a
  class-family $(M,F)$  
   such that  for all
   $x \smin M$ we have $x \in A$ and $F(x) \in B_x$.  It is
   \emph{small} when it is a family---i.e., the domain is a set. 
   

 Two other kinds of function occur in the paper.
\begin{itemize}
\item Given a class $A$, we speak of a function $F \ccolc A \rightarrow \catb$,
  where $\catb$ is a collection, or---more generally---of a function 
  $F \in \prod_{x \in A} \catb_x$, where $\catb$ is an
  $A$-tuple of collections.  In  each instance, it is obvious how $F$ can be
  represented.  For
  example, we can represent $F \ccolc A \rightarrow \class^2$ as a
  pair of functions $A \rightarrow \class$.
\item Given collections $\cata$ and $\catb$, we speak of a function $\cata \rightarrow \catb$.  For example, $\psetset$ is an endofunction on
  $\class$.  
\end{itemize}

\psubsection{Class Reasoning: Arbitrariness and Predicativity} \label{sect:arbpred}


When speaking about classes, we must observe two
disciplines.

Firstly, in order to assert that all classes have a given property, it
 is insufficient to prove this merely for classes that are 
 definable from first-order parameters.  Instead we must prove that an
 \emph{arbitrary} class has the property.  
 That is because our base theory's syntax 
includes class variables.

Secondly, in order to assert the existence of a class with a given property, we
must prove this \emph{predicatively}---i.e., without quantification
 over class variables.  That is because our base theory's syntax does not provide
 such quantification.

Both requirements are illustrated in Section~\ref{sect:nats}.

 \psubsection{Powerset, Choice and Collection} \label{sect:powchoicecoll}
 
We now present some additional principles, beginning with Powerset.
\begin{proposition} \label{prop:powequiv}
    The following are equivalent.
    \begin{itemize}
    \item \emph{Powerset}: For any set $A$, the class $\pset A$ is a set.
    \item \emph{Exponentiation}:  For any sets $A$ and $B$, the class
      $B^A$ is a set.
    \item For any family of sets
      $(B_i)_{i \in I}$, the class $\prod_{i \in I}B_i$ is a set.
    \end{itemize}
  \end{proposition}
  \begin{proof}
    Via the following constructions.
    \begin{eqnarray*}
      \pset A & \eqdef & \setbr{\setbr{x \smin A \mid f(x) =
                         1} \mid f \in \setbr{0,1}^A } \\
      B^A & \eqdef & \prod_{x \in A} B \\
      \prod_{i \in I}B_i & \eqdef & \setbr{f \in \pset \sum_{i \in
                                    I}B_i \mid \forall i \smin
                                    I.\, \existsu b \smin B_i.\,
                                    \tuple{i,b} \in f}
    \end{eqnarray*}
  \end{proof}

  We continue with the following principles.
  \begin{itemize}
 \item The axiom of \emph{Choice} (AC): For any family of inhabited
   sets $(A_i)_{i \in I}$, the class $\prod_{i \in I}A_i$ is
  inhabited.    
  \item The axiom scheme of \emph{\classchoice{}}: For any family of
    inhabited classes $(A_i)_{i \in I}$, the class $\prod_{i \in I}A_i$ is
inhabited.
 \item The axiom scheme of \emph{Collection}: For any family of
    inhabited classes $(A_i)_{i \in I}$, the class $\prod_{i \in I}
   \psetinh A_i$ is inhabited.
 \end{itemize}

 \begin{proposition} \label{prop:collchoice} \hfill
   \begin{enumerate}
   \item \label{item:classchoice} \classchoice{} is equivalent to Collection $+$ AC.
   \item \label{item:zfcoll} {\bf In ZF} Collection holds.
   \end{enumerate}
 \end{proposition}
 \begin{proof} \hfill
   \begin{enumerate}
   \item \label{item:classchoiceproof} For ($\Rightarrow$), it is obvious that AC holds, and Collection is proved
     as follows.  Given a set $I$ and $I$-tuple of inhabited
     classes $A$, we obtain $x \smin \prod_{i \in I}A_i$ by \classchoice{}, and then
     $i \mapsto \setbr{x_i}$ inhabits $\prod_{i \in I} \psetinh A_i$.
     For ($\Leftarrow$), given a set $I$ and $I$-tuple of
     inhabited classes $A$, we obtain
     $B \smin \prod_{i \in I} \psetinh A_i$ by Collection, and then---by
     AC---the subclass 
       $\prod_{i \in I} B_i$ of
       $\prod_{i \in I}A_i$ is inhabited.
     \item \label{item:zfcollproof} We write $(V_{\alpha})_{\alpha \in \Ord}$ for the cumulative
  hierarchy in the usual way.  Given a set $I$ and $I$-tuple of inhabited
     classes $A$, we proceed as follows.  For each $i \smin I$, define
     $t(i)$ to be the least ordinal $\alpha$ such that $A_i \cap
     V_{\alpha}$ is inhabited. Then $i \mapsto A_i \cap V_{t(i)}$
     inhabits $\prod_{i \in I} \psetinh A_i$.  \qedhere
     \end{enumerate}
 \end{proof}

\psubsection{Ordered Collections}  \label{sect:ordcoll}

Order plays
a large role in our story, so we present
some useful notions.

Given an ordered collection $\cata$, a subcollection $\catb$ is \emph{lower}
when, for any $x \smin \catb$ and $y \leqslant x$, we have $y \in
\catb$.

Given  ordered collections $\cata$ and $\catb$,
a function $h \ccolc \cata \rightarrow \catb$ is \emph{monotone} when
$x \leqslant y$ implies $h(x) \leqslant h(y)$.




In this paper, a \emph{\standcoll{}} is a collection
$\cate$ equipped with a class-family of classes $(B_x)_{x \in A}$, and a bijection
$\cate \cong \classfamoo{B_x}{x \in A}$.  This structure induces an
order on $\cate$, so we write $\bigvee$ for supremum and $\bot$ for
the least element.   It also induces a notion of smallness: we write $\fund{\cate}$ for the class of all small
elements.  For $x \smin \cate$, we write
$\waybel{x}$ for the class of all $y \smin \fund{\cate}$ such that $y
\leqslant x$.  



For example, let $C$ be a class.
\begin{itemize}
\item $\subclassof{C}$ is a \standcoll{}, since subclasses of
  $C$ correspond to partial functions $C \rightharpoonup 1$ via the
  bijection $X \mapsto (X, x \mapsto *)$.    A small element is a
  subset of $C$.
\item $\classfamof{C}$ is a \standcoll{}, since class-families within
  $C$ are the same thing as partial functions $\totall \rightharpoonup
  C$.  A small element is a family within $C$.
\end{itemize}

The following is adapted from~\cite{Aczel:book,Takahashi:inductionset}.

\begin{definition} \label{def:setcont}
  Let $\catd$ and $\cate$ be \standcoll{}s.
  \begin{enumerate}
  \item \label{item:extfund} For any monotone function 
    $f \ccolc \fund{\catd} \rightarrow \cate$, the monotone
    extension $\hat{f} \ccolc \catd \rightarrow \cate$ is defined as 
    $x \mapsto \bigvee_{y \in \subwaybel{x}} f(y)$.\footnote{In the
      language of category theory, $\hat{f}$ is the left Kan extension of
      $f$ along the inclusion $\fund{\catd} \subseteq \catd$.  That is, the least monotone function $g \ccolc \catd
      \rightarrow \cate$ such that $f \leqslant g \restriction_{\fund{\catd}}$.}
  \item \label{item:setcont} A function
    $h \ccolc \catd \rightarrow \cate$ that arises from
    its restriction to $\fund{\catd}$ in this way is said to be
    \emph{set-continuous}.  Explicitly, this means that $h$ sends each
    $x \smin \catd$ to $\bigvee_{y \in \subwaybel{x}} h(y)$.
  \end{enumerate}
\end{definition}
  
  For example, $\pset$ is a set-continuous endofunction on $\class$.  All 
  functions between \standcoll{}s considered in this paper are
  set-continuous (and hence monotone).

Note: since a set-continuous function $\catd \rightarrow \cate$ can be
represented by its restriction to $\fund{\catd}$, any quantifier
ranging over set-continuous functions can be regarded as ranging over classes.

 \psubsection{Fixpoints} \label{sect:fixpoint}

 We present some notions of fixpoints and order, as they are repeatedly used
 in the paper.
 

Let $\cate$ be a collection, and $h$ an endofunction on $\cate$.  Then  an
element $x \smin \cate$ is
\emph{$h$-fixed} or an \emph{$h$-fixpoint} iff $h(x) =
x$.   The prefix $h$ can be omitted when clear from the context.

Now let $\cate$ be an ordered collection, and  $F$ a monotone
endofunction on $\cate$.  An element $x \smin \cate$
is
\emph{$h$-prefixed} or an \emph{$h$-prefixpoint} when $h(x) \leqslant x$, and
   \emph{$h$-postfixed} or an \emph{$h$-postfixpoint} when $x \leqslant h(x)$.\footnote{This
   terminology follows~\cite{SmythPlotkin:catrecdom}.  Some authors use the opposite
   terminology, following~\cite{MannaShamir:convfun}.}  
 So $x$ is fixed iff it is both \mayg{}prefixed and
\mayg{}postfixed.  Note that an infimum of
 \mayg{}prefixpoints is \mayg{}prefixed, and a supremum of
 \mayg{}postifxpoints is \mayg{}postfixed.

We say that $h$ is \emph{inflationary} when every $x \smin
\cate$ is a postfixpoint, and \emph{deflationary} when every $x \smin \cate$ is a
prefixpoint.



The least \mayg{}prefixpoint of $h$, if it exists, is written
$\mu h$.  It is necessarily \mayg{}fixed; this fact is called
\emph{inductive inversion}.  Dually the greatest \mayg{} postfixpoint
of $h$, if it exists, is written $\nu h$ and is necessarily
\mayg{}fixed.   




A \mayg{}prefixpoint $x$ is \emph{minimal} when the only 
\mayg{}prefixpoint $y$ such that $y \leqslant x$ is $x$ itself.  A least prefixpoint is
minimal, and conversely if $\cate$ has
 binary meets, which is always the case for a \standcoll{}.

 \psubsection{Natural Numbers} \label{sect:nats}

Bearing in mind that we do not assume Infinity, we must carefully
define the class $\nats$ of
all natural numbers.  Specifically, we shall construct the \emph{Zermelo
  natural numbers}:
\begin{eqnarray*}
  \nats & = & \setbr{\Zero, \Succb{\Zero}, \Succb{\Succb{\Zero}},
              \ldots} 
\end{eqnarray*}
where $\Zero \eqdef \emptyset$ and $\Succb{x} \eqdef \setbr{x}$.  
Firstly, we define the monotone endofunction
$\incs$ on
$\class$ that sends $X$ to 
\begin{math}
  \setbr{\Zero} \cup \setbr{\Succb{x} \mid x \smin X}
\end{math}.\footnote{This terminology comes from functional programming~\cite{Haskell:maybe}.}
So a class $X$ is
$\incs$-prefixed iff it  contains $\Zero$ and, for any $x \smin X$,
contains $\Succb{x}$.  We want $\nats$ to be the least $\incs$-prefixed class---``least'' means that $\nats$ is included in an arbitrary
$\incs$-prefixed class.

We 
cannot simply define $\nats$ to be the intersection of all
$\incs$-prefixed classes, as that would be impredicative.  Instead we
proceed as follows.
     \begin{proposition} \label{prop:nats}
     The class $\nats \eqdef \mu \incs$ exists.
   \end{proposition}
   \begin{proof}
First note that a class $X$ is $\incs$-postfixed iff every $x \smin
     X$ is either $\Zero$ or $\Succb{x}$ for some $x \smin
     X$.   When this is so, say that a subclass $U$ of $X$ is \emph{inductive}
     when it contains $\Zero$ if $X$ does, and, for all $x \smin
     U$, contains $\Succb{x}$ if $X$ does. 

  For an inhabited class $I$, we note that $\incs$ preserves
  $I$-indexed intersections, 
 and so any $I$-indexed intersection of $\incs$-postfixpoints is
 $\incs$-postfixed.  Therefore, any thing  $x$  that is contained in a $\incs$-postfixed set is
  contained in a least such set---viz., the intersection of all
  $\incs$-postfixed sets that contain $x$.  We call this set $\predof{x}$. 


     Define $\nats$ to be the
     class of all $x$ such that $\predof{x}$ exists and its only
     inductive subset is 
  $\predof{x}$ itself.  This class has the required properties.
\end{proof}

   We often write $0 \eqdef \Zero$ and, for $n \smin \nats$, write
   $n+1 \eqdef \Succb{n}$.  
   The standard properties of $\nats$ hold, including
   the following.
   
   \begin{proposition}[Recursion over $\nats$] \label{prop:recurnat}
     For any sequence of classes $(B_n)_{n \in \nats}$ and
     any 
      $p \smin B_0$ and $L \smin
     \prod_{n \in \nats} (B_n \rightarrow B_{n+1})$, there is a
       unique sequence $b \smin \prod_{n \in \nats} B_n$ such that $b_0 = p$ and,
       for all $n \smin \nats$, we have $b_{n+1} = L_n(b_n)$.
     \end{proposition}

The axiom of \emph{Infinity} says that a $\incs$-prefixed set
exists; this is equivalent to $\nats$ being a set.

\psection{Well-Foundedness and Scaffolds} \label{sect:wfscaffolds}

 \psubsection{Set-Based Relations} \label{sect:setbased}

 We shall now consider relations on a class, leading up to the key notion
 of \wellfound{}ness.
     \begin{definition} \label{def:desc}
       Let $(C,<)$ be a class equipped with a relation.  
       \begin{enumerate}
       \item \label{item:childdesc} Let $x \smin C$.  An element $y \smin C$ is
         \begin{itemize}
         \item a \emph{child} of $x$ when $y < x$.
         \item a \emph{descendant} of $x$, written
           $y <^* x$, when there is a sequence
           \begin{displaymath}
y = z_0 < \cdots < z_n =
           x
         \end{displaymath}
         \item a \emph{strict descendant} of $x$, written
           $y <^+ x$, when there is such a sequence with $n > 0$.
         \end{itemize}
         We write $J_{<}(x)$ for the class of all children of $x$, and 
  $J_{<}^*(x)$ for the class of all descendants, and
  $J_{<}^+(x)$ for the class of all strict descendants. 
  The subscript $<$ may be omitted when clear from the context.
    \item \label{item:hered} A subclass $X$ of $C$ is \emph{hereditary} when every
           child of an element of $X$ is in $X$.
         \end{enumerate}
       \end{definition}
       Thus, for $x \smin C$, we see that $J_{<}^*(x)$ is the least hereditary subclass of $C$ that contains
  $x$. 

\begin{example} \label{example:membershipdesc}
  Consider the membership relation on $\totall$. A class $X$ is \emph{membership-hereditary} or \emph{transitive} when every element of
  an element of $X$ is in $X$.    For a thing
       $x$, we write $\elset{x}$ for its \emph{element set},
       which is  $x$ or $\emptyset$ according as $x$ is a set or
an urelement.  We write 
       $\memreach{x}$ for the class of all 
membership-descendants of $x$, and $\memplus{x}$ for the class of all
 strict ones.  Thus $\memreach{x}$ is the least
  transitive class containing $x$.  
\end{example}

     \begin{definition} \label{def:setbased} 
       A relation $<$ on a class $C$ is
       \begin{itemize}
       \item \emph{\setb{}} when $\jlt(x)$ is a set for all $x
         \smin C$.
       \item \emph{\fullsetb{}} when each $x\smin C$ is contained
         in a hereditary subset of $C$; this is equivalent to 
         $\jltstar(x)$ being a set.  
         %
       \end{itemize}
     \end{definition}
    \noindent Thus $<$ is \fullsetb{} iff $<^*$ is \setb{}.

     
     \begin{proposition} \label{prop:inffull}
      Infinity is equivalent to the statement: ``Every \setb{}
         relation on a class is \fullsetb{}.''
     \end{proposition}
     \begin{proof} 
              For ($\Rightarrow$), let $<$ be a \setb{} relation on a
              class $C$, and $x
              \smin C$.  By induction on $n \smin \nats$, the class $\jlt^{n}(x)$ of
              descendants of $x$ at depth $n$ is a set, since
              \begin{eqnarray*}
                \jlt^{0}(x) & = & \setbr{x} \\
                \jlt^{n+1}(x) & = & \bigcup_{y \in \jlt^{n}(x)} \jlt(y) 
              \end{eqnarray*}
 So the class $\jltstar(x) =
 \bigcup_{n \in \nats} \jlt^n(x)$ is a set.

           For ($\Leftarrow$), the relation on $\totall$ given by
           $\setbr{\tuple{x,y} \mid y = \Succb{x}}$ is \setb{}, and the
           descendant class of $\Zero$ is $\nats$.  So if \setb{} 
           implies \fullsetb{}, then $\nats$ is a set.
         \end{proof}

          \begin{example} \label{example:membershipsetbased}
     The membership relation on $\totall$ is \setb{}, and is \fullsetb{} iff  the \emph{Transitive
      Containment} axiom holds: Every thing is contained in a transitive
    set.  Thus, by \cref{prop:inffull}, this axiom follows from Infinity.
   \end{example}

         
\psubsection{Well-Founded Set-Based Relations} \label{sect:wellfound}

The notion of \wellfound{}ness relies on the following concepts.
\begin{definition} \label{def:indmin}
  Let $(C,<)$ be a class equipped with a relation.  Let $X$ be a subclass.
  \begin{enumerate}
  \item \label{item:ind} $X$ is \emph{inductive} when every
    element of $C$ whose children are all in $X$ is in $X$.
 \item \label{item:min} An element of $X$ is \emph{minimal} when it has no
       child in $X$.
  \end{enumerate}
  \end{definition}

  \noindent Now we are ready to formulate \wellfound{}ness.
    \begin{proposition} \label{prop:wellfounded}
      Let $(C,<)$ be a class equipped with a set-based relation.  The
      following are equivalent:
      \begin{enumerate}
      \item \label{item:wfinduct} The only inductive subclass of $C$
        is $C$ itself.
      \item\label{item:wfinhab}  Every inhabited subclass of $C$ has a minimal element.
      \item \label{item:wfinhabset}  The relation $<$ is \fullsetb{}, and every inhabited
        subset of $C$ has a minimal element.
      \end{enumerate}
    \end{proposition}
    \begin{proof}
     Conditions (\ref{item:wfinduct}) and 
     (\ref{item:wfinhab}) are equivalent because a subclass of $C$ is inductive
     iff its complement has no minimal element.  

   To prove (\ref{item:wfinduct}) implies (\ref{item:wfinhabset}), we
   need only show that $<$ is \fullsetb{}: for all $x \smin
   C$, the class $\jltstar(x)$ is a set, by induction on $x$.

   To show (\ref{item:wfinhabset}) implies (\ref{item:wfinhab}), any
   subclass $Y$ of $C$ inhabited by $x$ gives a subset $Y \cap \jltstar(x)$
    of $C$ inhabited by $x$.  The latter has a minimal element, which
    is also a minimal element of $Y$.  
    \end{proof}
When the above conditions hold, we say that $<$ is a 
\emph{\sbwf{} relation}.

\begin{example} \label{example:membershipwo}
Consider again the membership relation on $\totall$.  A class $X$ is  \emph{membership-inductive} when every thing whose elements are all in
  $X$ is in $X$.  An element $x \smin X$ is \emph{membership-minimal}
  when it has no element in common with $X$.   We can say that membership is \wellfound{} in each of
    the following ways.
    \begin{itemize}
    \item The axiom scheme of \emph{Membership Induction}: The only 
      membership-inductive class is $\totall$.
    \item The axiom scheme of \emph{Class Regularity}: Every inhabited class
      has a membership-minimal element.
    \item Transitive Containment $+$ the axiom of \emph{Regularity}: Every
      inhabited set has a membership-minimal element.
    \end{itemize}
 \end{example}

  Here are some basic properties of \wellfound{} relations.
  \begin{proposition} \label{prop:reflwf}
    Let $(A,<)$ and $(B,<')$ be classes equipped with a set-based
    relation, and $f \ccolc A \rightarrow B$ a function such that $x
    < y$ implies $f(x) <' f(y)$.  If $<'$ is \wellfound{}, then $<$ is too.
  \end{proposition}
  \begin{proof}
    Let $X$ be an inductive subclass of $A$.  We prove by induction on
    $y \smin B$ that $f^{-1}(y) \subseteq X$.  
  \end{proof}
  
    \begin{proposition} \label{prop:wf}
Let $(C,<)$ be a class equipped with a set-based relation.
      \begin{enumerate}
      \item \label{item:wfplus} The relation $<^+$ is \wellfound{} iff
        $<$ is.
     \item \label{item:irref}  If $<$ is \wellfound{}, then there is no infinite sequence $\ \cdots
       < x_1 < x_0$.  
      \end{enumerate}
    \end{proposition}
    \begin{proof} \hfill
      \begin{enumerate}      
       \item The direction ($\Rightarrow$) is by
         \cref{prop:reflwf}.  For ($\Leftarrow$), let $X$ be a $<^+$-inductive subclass of $C$.  For all $x
         \in C$, we prove $\jltstar(x) \subseteq C$ by induction on $x$.
       \item Fix such a sequence.  We prove that every $x \smin C$
         fails to appear in it, by induction
         on $x$. \qedhere
      \end{enumerate}
    \end{proof}

  Functions and partial functions can be defined  by \wellfound{} recursion:
 \begin{proposition} \label{prop:wfrecur}
   Let $(A,<)$ be a class equipped with a \sbwf{}
   relation, and $B$ an $A$-tuple of classes.
   \begin{enumerate}
   \item \label{item:wfrecurtot} For any $L \smin \prod_{x \in A} ((\prod_{y \in
       \jlt(x)}B_y) \rightarrow B_x)$, there is a unique function
     $F \smin \prod_{x \in A} B_x$ sending $x \smin A$ to
     $L_x(F \restriction_{\jlt(x))})$.  In other words, the
     endofunction $\Phi_L$ 
     on $\prod_{x \in A}B_x$ sending $F$ to
     $x \mapsto L_x(F \restriction_{\jlt(x)})$ has a unique fixpoint.
   \item \label{item:wfrecurpart}  For any $L \smin \prod_{x \in A}( (\prod_{y \in \jlt(x)}B_y)
    \rightharpoonup B_x)$, let $\Psi_L$ be the monotone endofunction
    on $\classfamoo{B_x}{x \in A}$  sending $(M,F)$ to $(N,G)$, where $N$
    is the class of all $x \smin A$ such that $\jlt(x) \subseteq M$ and $F \restriction_{\jlt(x)}
    \in \domof{L_x}$, and $G$ sends such an $x$ to $\overline{L_x}(F
    \restriction_{\jlt(x)})$.  Then $\Psi_L$ has a least prefixpoint that is also a
    greatest postfixpoint and therefore a unique fixpoint.
   \end{enumerate}
  \end{proposition}
  \begin{proof} We first prove part~(\ref{item:wfrecurpart}).   For a hereditary subclass $M$ of $A$, an \emph{attempt} on $M$
    is  a function $F \smin \prod_{x \in M}  B_x$ such that,
    for all $x \smin M$, we have $F \restriction_{\jlt(x)} \in \domof{L_x}$ and
    $F(x) = \overline{L_x}(F_x \restriction_{\jlt(x)})$. Thus a $\Psi_L$-postfixpoint $(M,F)$ consists of a hereditary subclass $M$ of
    $A$, and an attempt $F$ on $M$.  Induction shows that
    \begin{itemize}
    \item any  attempt on $M$ and attempt on $M'$ agree on $M \cap M'$
    \item any postfixpoint is included in any prefixpoint.
    \end{itemize}
   Let $P$ be the class of
    all $x$ such that there is a (necessarily unique) attempt on
    $\jltstar(x)$.  Let $H$ send each $x \smin P$ to its image under the
    attempt on $\jltstar(x)$.  Then $(P,H)$ is a fixpoint, since any
    attempt $g$ on $\jltplus(x)$ such that $g\restriction_{\jlt(x)}\in \domof{L_x}$
    extends to an attempt  $g
  \cup \setbr{\tuple{x,\overline{L_x}(g \restriction_{\jlt(x)})}}$ on
  $\jltstar(x)$.  So part~(\ref{item:wfrecurpart}) is proved.

  If $L_x$ is total for all $x \smin A$, then any
  $\Psi_{L}$-prefixpoint is total, and  $\Phi_{L}$ is the restriction
  of $\Psi_{L}$ to total functions, so part~(\ref{item:wfrecurtot}) follows.
  \end{proof}

  
\psubsection{Generating a Subclass} \label{sect:gensubclass}

Suppose we have a class $C$.  We sometimes want to show that a given endofunction on
$\subclassof{C}$ has a least
prefixpoint. The following makes this possible.
\begin{definition} \hfill \label{def:scaff}
  \begin{enumerate}
  \item \label{item:scaff} A \emph{scaffold} on $C$ consists of
    \begin{itemize}
    \item a subclass $D$
    \item a relation $<$ from $C$ to $D$.
    \end{itemize}
    We call $x \smin D$ a \emph{parent} and $y < x$ a \emph{child} of
    $x$.  The scaffold is \emph{set-based} when, for all $x\smin D$,
    the class $J_{<}(x) \eqdef \setbr{y \smin C \mid y < x}$ is a set.
  \item \label{item:scafffun} A scaffold $(D,<)$ on $C$ gives rise to a set-continuous
    endofunction $\gamdlt$ on  $\subclassof{C}$, sending $X$ to the class of all $x \smin D$ whose
    children are all in $X$.
  \end{enumerate}
\end{definition}
\noindent Thus a subclass $X$ of $C$ is
  \begin{itemize}
 \item $\gamdlt$-prefixed iff every parent whose
   children are all in $X$ is in $X$
  \item $\gamdlt$-postfixed iff it is a hereditary subclass of $D$.
  \end{itemize}

\begin{proposition} \label{prop:gensub} 
    Let $(D,<)$ be a scaffold on $C$.
    \begin{enumerate}
   \item \label{item:scaffgen} \emph{(Generated subclass.)} \label{item:gensub} If the
      scaffold is set-based, then $\gamdlt$ has a least prefixpoint,
      which is also the greatest postfixpoint on which $<$  is
      \wellfound{}.
       \item \label{item:scaffcogen} \emph{(Cogenerated subclass.)} \label{item:cogensub} The
      endofunction $\gamdlt$ has a greatest postfixpoint.
    \end{enumerate}
 \end{proposition}
 \begin{proof} \hfill
   \begin{enumerate}
   \item \label{item:scaffgenproof}  First note that $<$ is a set-based relation on $C$, since for
     $x \smin C \setminus D$, the class $\jlt(x)$ is $\emptyset$.

     Take the class of all $x \smin C$ such that $\jltstar(x)$ is a
     subset of $D$ whose only inductive subset is itself.  This is
     clearly the least $\gamdlt$-prefixed subclass.  By
     inductive inversion, it is $\gamdlt$-postfixed.  The rest is
     straightforward, using the fact that any \setb{} \wellfound{}
     relation is \fullsetb{}.
      \item \label{item:scaffcogenproof}  Take the class of all $x \smin C$ such that $\jltstar(x) 
        \subseteq D$.
        \qedhere
   \end{enumerate}
 \end{proof}

 \paragraph*{Note} All scaffolds in this paper are on $\totall$,
 except in the proof of \cref{prop:powinfty}, where we use a scaffold on
 a set.  
 
 \begin{example} \label{example:nat}
The endofunction $\incs$ arises from the following scaffold on $\totall$: a parent is either $\Zero$, which has
 no children, or $\Succb{x}$, whose sole child is $x$.  So
 \cref{prop:nats} is an instance of 
 \cref{prop:gensub}(\ref{item:gensub}).
\end{example}

\begin{example} \label{example:wfimpure}
 The endofunction $\gamdltof{\totall}{\in}$ sends a class $X$ to $\mathsf{Ur} \cup 
  \pset X$.  We define
  \begin{eqnarray*}
\wfimpure & \eqdef & \mu
  \gamdltof{\totall}{\in}
  \end{eqnarray*}
  which is the least $\psetset$-prefixed class that includes 
  $\mathsf{Ur}$.   \cref{prop:gensub}(\ref{item:gensub}) tells us that $\wfimpure$ is 
  the least membership-inductive class, and the
  greatest transitive class on which membership is \wellfound{}.  A
  member of this class is called a \emph{vonniad}---the name alludes to
``von Neumann  iteration''. 
\end{example}

\begin{example} \label{example:wfpure}
A thing is \emph{pure} when its membership-descendants are all sets.  The class of all pure things is $\nu \pset$,
which is an instance of
\cref{prop:gensub}(\ref{item:cogensub})  since $\pset =
\gamdltof{\allsets}{\in}$.   Likewise, the class of all
pure vonniads is given by 
  \begin{eqnarray*}
    \wfpure & \eqdef & \mu \pset
  \end{eqnarray*}
which is an instance of \cref{prop:gensub}(\ref{item:gensub}).
\end{example}


\psubsection{Generating a Partial Function} \label{sect:genpfn}

Now suppose we have a class-family of classes $(B_x)_{x \in a}$.  We sometime want
to show that a given endofunction on $\classfamoo{B_x}{x \in
  A}$ has a least prefixpoint.  The following makes this possible.
\begin{definition}  Let $(D,<)$ be a set-based scaffold on $A$. \label{def:scafffunc}
   \begin{enumerate}
   \item \label{item:functscaff} A \emph{functionalization} of $(D,<)$ on $B$ is an $L \in
     \prod_{x \in D}((\prod_{y \in \jlt(x)}B_y) \rightharpoonup
     B_x)$.
   \item \label{item:scafffam} Let $L$ be such a functionalization.  The set-continuous endofunction $\delwl$
  on $\classfamoo{B_x}{x \in A}$ sends $(M,F)$ to $(N,G)$, where
  \begin{itemize}
  \item $N$ is the class of $x \smin D$ such that
    $\jlt(x) \subseteq M$ and $F \restriction_{J(x)} \in \domof{L_x}$
   \item $G$
    sends each such $x$ to $\overline{L_x}(F \restriction_{\jlt(x)})$.
  \end{itemize}
   \end{enumerate}
    \end{definition}

    \begin{proposition}[Generated partial function]  \label{prop:genpf}
 Let $(D,<)$ be a set-based scaffold on $A$ with  functionalization
  $L$ on $B$.  Then $\delwl$ has a least prefixpoint,
  which is also the greatest postfixpoint $(M,F)$ such that $<$ is
  \wellfound{} on $M$. 
\end{proposition}
\begin{proof}
  Let $E$ be the subclass of $A$ generated by $(D,<)$. Since $E$ is $\gamdlt$-prefixed, $\delwl$ restricts to an
  endofunction on $\classfamoo{B_x}{x \in E}$.  By
  \cref{prop:wfrecur}(\ref{item:wfrecurpart}), the latter has a least
  prefixpoint $(M,F)$ that is also a greatest
  postfixpoint, since $<$ is \wellfound{} on $E$.  Because $(M,F)$ is a minimal prefixpoint in
  $\classfamoo{B_x}{x \in E}$, which is a lower subcollection of $\classfamoo{B_x}{x \in A}$, it is a minimal and therefore least
  prefixpoint in 
  $\classfamoo{B_x}{x \in A}$.

  For any postfixpoint $(N,G)$, the
  class $N$ is $\gamdlt$-postfixed.  So if $<$ is \wellfound{} on $N$,
  then $N \subseteq E$, giving 
  $(N,G) \in \classfamoo{B_x}{x \in E}$ and so $(N,G) \leqslant (M,F)$.  
\end{proof}

\psubsection{Introspection} \label{sect:introspect}

\quad \emph{This section is not used in the sequel.}

If membership is \wellfound{}, then $\nats$ is the
\emph{unique}  $\incs$-fixpoint, i.e., the unique class $X$ such
that $x \in X$ iff either $x = \Zero$ or $x = \Succb{y}$ for some $y
\smin X$.   Various other classes defined in the
paper, such as $\wide{S}$ and $\broad{G}$ and $\derivsof{\catr}$ and
$\Ord$, have a similar property.  That is because they are ``introspectively
generated'', in a sense that I now explain.
\begin{definition}  Let $C$ be a class. \label{def:introspect}
  \begin{enumerate}
  \item \label{item:introrel} A relation $<$ on $C$ is \emph{introspective} when $<$ is
    included in $\in^+$.  In other words: when, for all $x \smin C$, we
    have $\jlt(x) \subseteq \memplus{x}$.
  \item \label{item:introscaff} Likewise, a scaffold $(D,<)$ on $C$ is \emph{introspective} when $<$ is
    included in $\in^+$. 
  \end{enumerate}
\end{definition}

\begin{proposition} \hfill \label{prop:equivintrospect}
  \begin{enumerate}
  \item \label{item:tcintro} Transitive Containment is equivalent to the statement: ``Every
    introspective relation on a class is \fullsetb{}.''
  \item \label{item:memintro} Membership Induction is equivalent to the statement: ``Every introspective relation on a class is \wellfound{}.''
  \end{enumerate}
\end{proposition}
\begin{proof} \hfill
  \begin{enumerate}
\item \label{item:tcintroproof}    Transitive Containment is equivalent to membership being 
    \fullsetb{}, which is equivalent to $\in^+$ being \fullsetb{},
    which is equivalent to every introspective relation being
    \fullsetb{}.
  \item \label{item:memintroproof} Similar.  \qedhere
  \end{enumerate}
\end{proof}
Now we come to the key result of the section:
\begin{proposition}  \label{prop:biindscaff}
  Each of the following is equivalent to Membership Induction.
  \begin{enumerate}
  \item \label{item:biindscaff} For any class $C$, and any set-based introspective 
    scaffold $(D,<)$ on $C$, the endofunction $\gamdlt$ on $\subclassof{C}$ has a unique fixpoint.
 \item \label{item:biindscafffam} For any class $A$ and $A$-tuple of classes $B$, and any set-based introspective scaffold $(D,<)$ on $A$ with
   functionalization $L$ on $B$, the
   endofunction $\delwl$ on $\classfamoo{B_x}{x \in A}$ has a unique fixpoint.
  \end{enumerate}
\end{proposition}
\begin{proof} \hfill
  \begin{enumerate}
  \item \label{item:biindscaffproof} Membership Induction implies this by \cref{prop:equivintrospect}(\ref{item:memintro}) and
    \cref{prop:gensub}(\ref{item:gensub}), since a least
    prefixpoint that is also a greatest postfixpoint is a unique fixpoint.  For the
    converse, since $\totall$ is a fixpoint of
    $\gamdltof{\totall}{\in}$, it must be the least prefixpoint and
  so  $\in$ is \wellfound{} over it.
     \item \label{item:biindscafffamproof} Membership Induction implies this by \cref{prop:equivintrospect}(\ref{item:memintro}) and
       \cref{prop:genpf}.  For the converse, the scaffold
       $(\totall,\in)$ on $\totall$ has a functionalization $L$ on
       $(1)_{x \in \totall}$
       that at $x$ takes $[*]_{y \in \elset{x}}$ to $*$.  Since the
       partial function $(\totall, x \mapsto *)$ is a fixpoint of
       $\delwlof{\totall}{\in)}{L}$, it must be the least prefixpoint and
       so $<$ is \wellfound{} over $\totall$.  \qedhere
  \end{enumerate}
\end{proof}

\begin{example} \label{example:intronat}
Let us apply  \cref{prop:biindscaff}(\ref{item:biindscaff})
to Example~\ref{example:nat}.  We see that, if Membership
Induction holds, then $\nats$ is the unique $\mathsf{Maybe}$-fixpoint.  
\end{example}

 \psection{Using Ordinals}\label{sect:basicord}
 \psubsection{Set-Based Well-Orderings} \label{sect:setbasedwo}

 Our next task is to give the basic principles of ordinals, which
 requires us to first develop the notion of well-ordering.  As in \Cref{sect:wellfound}, we treat not only relations on a set, but also
   on a class. 

 Let $C$ be a class with a relation $<$.  Recall the notation
 $J(x) \eqdef \setbr{y \smin C \mid y < x}$.  We write $\sqsubseteq$
 for the \emph{extensional 
   preorder}, given by $x \sqsubseteq y \iffdef J(x) \subseteq
 J(y)$.  This is an order iff $J$ is injective, and we then say that $<$ is \emph{extensional}.

A \emph{strict order} on a class $C$ is an irreflexive transitive
  relation $<$.   We write $\leqslant$ for the corresponding order,
  given by $x \leqslant y \iffdef x < y \vee x=y$.  Thus a
  subclass is lower iff it is hereditary.

  A \emph{linear order} on a class $C$ is a relation $<$ such that no
  two elements of $C$ are mutually related and, for any $x,y \smin C$, either
  $x=y$ or $x<y$ or $y<x$.  It follows that $<$ is both extensional and a strict order, with $\sqsubseteq$ and $\leqslant$ coinciding.

  

  
      
Now let us formulate the notion of
well-ordering.\footnote{Cf.~\cite[Page 93]{grayson:thesis}.}
  \begin{proposition} \label{prop:equivwo}
  Let $(C,<)$ be a class equipped with a set-based relation.  The
  following are equivalent:
  \begin{enumerate}
  \item  \label{item:wftrex}  $<$ is \wellfound{}, extensional and transitive.
  \item \label{item:wflin} $<$ is \wellfound{} and, for any $x,y \smin
    C$, either $x=y$ or $x<y$ or $y <x$.
  \item \label{item:strclass}  $<$ is a strict order, and any inhabited subclass has a least
    element.
  \item \label{item:strset} $<$ is a strict order, and any inhabited
    subset has a least element.
  \end{enumerate}
 Moreover, when these conditions hold, the relations $\sqsubseteq$ and $\leqslant$ coincide.
\end{proposition}
\begin{proof}
Firstly, (\ref{item:wflin}) implies (\ref{item:wftrex}) since a
\wellfound{} relation cannot mutually relate two elements, by \cref{prop:wf}(\ref{item:irref}).

  For  (\ref{item:wftrex}) $\Rightarrow$ (\ref{item:wflin})  say that $x,y \smin C$
  are \emph{comparable} when either $x=y$ or $x < y$ or $y < x$.  We claim that, for all $a,b \smin C$, if $a$ is
  comparable with all $y \smin \jlt(b)$, and all $x \smin \jlt( a)$ with $b$, then
  $a$ is with $b$.  It follows by induction that any $a,b \smin C$ are
  comparable.

  To prove the claim, it suffices to show that $a \not< b$ and $b \not< a$
  implies $a = b$.  Any $x \smin \jlt(a)$ is comparable with $b$, and
  is therefore $<b$ as $b \leqslant x$ would
  imply $b < a$.  Thus $\jlt(a) \subseteq \jlt(b)$, and 
  likewise $\jlt(b) \subseteq \jlt(a)$.   Extensionality gives $a=b$.

The rest follows from \cref{prop:wellfounded}, noting that
any \setb{} 
transitive relation is \fullsetb{}.
\end{proof}
A relation satisfying the above conditions is called a
\emph{set-based well-ordering}.   Henceforth, we often abbreviate $(C,<)$ as
$C$.  Now we consider how to compare set-based well-ordered classes.
\begin{proposition} \label{prop:embeddef}
  Let $A$ and $B$ be set-based well-ordered classes.  For a function
  $f \ccolc A \rightarrow B$, the following are equivalent.
  \begin{itemize}
  \item $f$ is an isomorphism from $A$ to a hereditary subclass of $B$.
  \item The square
    \begin{displaymath}
      \xymatrix{
        A \ar[d]_{J} \ar[r]^{f} & B \ar[d]^{J} \\
       \psetset A \ar[r]_{\psetset f} & \psetset B
        }
    \end{displaymath}
    commutes.\footnote{In the language of category theory, this says
      that $f$ is a
      $\pset$-coalgebra map.}  Explicitly: for all $x \smin A$, we have $\jlt(f(x)) =
    \setbr{f(y) \mid y \smin \jlt(x)}$.  
  \end{itemize}
\end{proposition}
\begin{proof}
  ($\Rightarrow$) is straightforward.  For ($\Leftarrow$), we show
  that $f(x) = f(x')$ implies $x=x'$ by induction on $x \smin A$ as
  follows: $f(x) = f(x')$ implies that for all $y \smin \jlt(x)$ there
  is $y' \smin \jlt(x')$ such that $f(y) = f(y')$ and hence $y = y'$
  so $\jlt(x) \subseteq \jlt(x')$, and the reverse inclusion likewise,
  so extensionality of $A$ gives $x=x'$.   
\end{proof}
A function satisfying the above conditions is called an
\emph{embedding}.  
\begin{proposition} \label{prop:embed} \hfill
  \begin{enumerate}
      \item  \label{item:compembed}  For set-based well-ordered classes $A,B,C$, the composite of
    embeddings $A \rightarrow B $ and $ B \rightarrow C$ is an
    embedding $A \rightarrow C$.
   \item \label{item:atmostone} For set-based well-ordered classes $A$ and $B$, there is at
     most one embedding $A \rightarrow B$.
   \end{enumerate}
 \end{proposition}
 \begin{proof} \hfill
  \begin{enumerate}
  \item  \label{item:compembedproof} Follows from the fact that, for any hereditary subclass $X$ of
    $B$, an embedding $B \rightarrow C$ restricts to an embedding 
    $X \rightarrow C$.
   \item\label{item:atmostoneproof}  For embeddings $f,g \ccolc A \rightarrow B$, we show $f(x) =
    g(x)$ by induction on $x \smin A$.   \qedhere
  \end{enumerate}
 \end{proof}
 Next we consider isomorphisms:
 \begin{proposition} \label{prop:isowo}
   Let $A$ and $B$ be set-based well-ordered classes.
   \begin{enumerate}
 \item \label{item:equivisowo} For a function  $f \ccolc A
    \rightarrow B$, the following are equivalent:
    \begin{itemize}
    \item \label{item:iso} $f$ is an isomorphism.
    \item \label{item:surjembed} $f$ is a surjective embedding.
  \item \label{item:mutuembed} $f$ is an embedding, and there is an
    embedding $B \rightarrow
    A$.
  \end{itemize}
     \item \label{item:uniqueisowo} There is at most one isomorphism $A \cong B$.
  \end{enumerate}
\end{proposition}
\begin{proof} \hfill
  \begin{enumerate}
  \item Clearly, the first two conditions are equivalent and imply the
    third.  Lastly, if $f$ and
  $g\ccolc D \rightarrow C$ are embeddings, then \cref{prop:embed}(\ref{item:compembed})
  gives 
  endo-embeddings $g \circ f$ on $C$ and $f \circ g$ on $D$.  By
  \cref{prop:embed}(\ref{item:atmostone}), both  
  are identity maps.
 \item Special case of
    \cref{prop:embed}(\ref{item:atmostone}).   \qedhere
  \end{enumerate}
\end{proof}


\begin{proposition} \label{prop:heredwo}
  Let $C$ be a set-based well-ordered class.
  \begin{enumerate}
  \item \label{item:onlyhered} The only hereditary subclasses of $C$ are $\jlt(x)$, for $x \smin
    C$, and $C$ itself.
  \item \label{item:noniso} These are pairwise non-isomorphic.  In other words:
    \begin{itemize}
    \item For any $x,y \smin C$, if $\jlt(x) \cong \jlt(y)$, then $x=y$.
    \item For any $x\smin C$, we do not have $\jlt(x) \cong C$.
    \end{itemize}
  \end{enumerate}
\end{proposition}
\begin{proof} \hfill
  \begin{enumerate}
  \item Let $X$ be a hereditary subclass.  If it has a strict upper
    bound, it has a strict supremum $x$ and is $\jlt(x)$.  Otherwise
    it is $C$.
   \item For $x \smin C$ and a hereditary subclass $Y$ of $C$, any
     isomorphism   $\theta \ccolc \jlt(x) \cong Y$ is an embedding
     $\jlt(x) \rightarrow C$. By Proposition \ref{prop:embed}(\ref{item:atmostone}), this must be
     the inclusion, so $\jlt(x) = Y$.  Since $x \not\in Y$, we cannot have $Y = C$,
     nor 
     $X = \jlt(y)$ for $y > x$.  Lastly, for $y < x$, we canot have $Y = \jlt(y)$
     since $y \in Y$.   \qedhere
  \end{enumerate}
\end{proof}

\begin{proposition} \label{prop:relatewellorder} \hfill
  \begin{enumerate} 
  \item \label{item:embeddisj} For set-based well-ordered classes $C$ and $D$, either $C$
    embeds into $D$ or vice versa.
      \item \label{item:deflat} Let  $B$ be a subclass of a set-based well-ordered class
        $(C,<)$.  Then $<$ is a set-based well-ordering on
        $B$, and there is a deflationary embedding $B \rightarrow C$.
  \end{enumerate}
\end{proposition}
\begin{proof} \hfill
  \begin{enumerate} 
  \item \label{item:embeddisjproof} Define $R(C,D)$ to be the relation from $C$ to $D$ that relates
    $x \smin C$ to $y \smin D$ when $\jlt(x) \cong \jlt(y)$.  It is an
    isomorphism from a hereditary subclass $X$ of $C$ to a hereditary 
    subclass $Y$ of $D$.  If $X = \jlt(x)$ and $Y = \jlt(y)$, then
    $(x,y) \in R(C,D)$, contradiction.  Therefore either $X=C$ or $Y=D$,
    which gives the two cases.
  \item \label{item:deflatproof} On the subclass $B$, the relation $<$ is set-based
    and \wellfound{} by \cref{prop:reflwf}, and also
    linear, hence a well-ordering.  Obtain
    $R(B,C)$ as before.  Induction shows that, for each $x \smin B$, 
    there is $y\leqslant x$ such that $(x,y) \in R(B,C)$.  Thus $R$ is
    total and deflationary.  \qedhere
  \end{enumerate}
\end{proof}

\begin{proposition} \label{prop:complwell}
  Let $(C,<)$ be a set-based well-ordered class. The following are
  equivalent.
  \begin{enumerate}
  \item \label{item:propclass} $C$ is a proper class.
  \item \label{item:allssup} Every subset of $C$ has a strict supremum.
  \item \label{item:strictsup} Every hereditary subset of $C$ has a strict supremum.
  \item \label{item:alliso} Every set-based well-ordered class embeds
    into $C$.
  \item \label{item:allisoset} Every well-ordered set is isomorphic to $\jlt(x)$ for some 
    $x \smin C$.
  \end{enumerate}
\end{proposition}
\begin{proof}
  Clearly (\ref{item:allssup}) implies (\ref{item:strictsup}).  For
  the converse, given a subset $A$ of $C$, its hereditary closure 
  $\setbr{x \smin C \mid \exists y \smin A.\,x \leqslant y}$ 
  has the same strict upper bounds.
  
  (\ref{item:strictsup}) implies (\ref{item:propclass}),
  because $C$ itself does not have a strict upper bound, so is not a set.

  (\ref{item:propclass}) implies (\ref{item:alliso}) because, for any
  set-based well-ordered class $B$ that does not embed into $C$, \cref{prop:relatewellorder}(\ref{item:embeddisj}) gives an
  embedding $f \ccolc C \rightarrow B$.  The
  range of $f$ is a hereditary 
  subclass of $B$ that is not a set, and therefore is  $B$ by
  \cref{prop:heredwo}(\ref{item:onlyhered}).   So $f$ is an isomorphism $C \cong B$,
  and its inverse is an embedding $B \rightarrow C$, contradiction.

  Clearly (\ref{item:alliso}) implies (\ref{item:allisoset}).

Lastly, (\ref{item:allisoset}) implies (\ref{item:strictsup}) because,
for any hereditary subset $A$ of $C$, there is an isomorphism  
  $f \ccolc {A} \cong \jlt(x)$ for some $x \smin A$, which is the
  identity by \cref{prop:embed}(\ref{item:atmostone}).  So $x$ is a strict supremum of $A$.
\end{proof}
We say that $(C,<)$ is \emph{complete} when it has the above
properties.  
\begin{proposition} \label{prop:compliso}
  Any two complete set-based well-ordered classes are
    uniquely isomorphic.
\end{proposition}
\begin{proof}
  By Propositions~\ref{prop:complwell}(\ref{item:alliso}) and~\ref{prop:isowo}.
\end{proof}

\psubsection{Ordinals} \label{sect:ord}

Our next task is to define $\Ord$. To do this, we write $\transset$
for the class of all transitive sets.  Thus the function $\gamdltof{\transset}{\in}$
sends a class $X$ to the class of all its transitive subsets.  By
\cref{prop:gensub}(\ref{item:gensub}), we can define
\begin{eqnarray*}
  \Ord & \eqdef & \mu \gamdltof{\transset}{\in}
\end{eqnarray*}
Alternatively, we can say that an  
\emph{ordinal} is a transitive set of transitive pure vonniads.
Here is yet another characterization of $\Ord$:
\begin{proposition}  \label{prop:ordcomplete}
    $\Ord$ is the
    unique class of sets $X$ such that $(X,\in)$ is a complete set-based
    well-ordered class.
\end{proposition}
\begin{proof} 
    To prove uniqueness, let $X,Y$ be two such classes.
    \cref{prop:compliso} gives an isomorphism
    $f \ccolc (X,\in) \cong (Y,\in)$.  For all $x \smin X$, we
    have $f(x) = x$, by induction on $x$: since $f(x)$ and $x$ are
    sets that (by the inductive hypothesis) have
    the same elements,
    they are equal.  So $X=Y$.

    Now we show $\Ord$ has the required property.  By inductive inversion, every ordinal is a transitive set of
    ordinals, so membership is extensional and transitive on $\Ord$.
    Furthermore, membership is (set-based and) \wellfound{} on $\Ord$,
    so it is a well-ordering.  Lastly, $(\Ord,\in)$ is complete since
    any transitive set of ordinals is an ordinal and its own supremum.
\end{proof}

It is convenient to refer to a special value
$\biginfty$ that is 
deemed greater than every ordinal. (Some authors call it ``Absolute Infinity''.)   To ensure that it is not an
ordinal, we define $\biginfty
\eqdef \setbr{\setbr{0}}$.  We obtain the ordered class $\extord
\eqdef \Ord \cup \setbr{\biginfty}$, whose elements are called
\emph{extended ordinals}.  For any extended ordinals $\alpha \leqslant \beta$, we define intervals
\begin{eqnarray*}
 {}  (\alpha \twodots \beta) & \eqdef & \setbr{\gamma \smin \Ord \mid
                                     \alpha < \gamma < \beta}
  \\
 {}   [\alpha \twodots \beta) & \eqdef & \setbr{\gamma \smin \Ord \mid
                                      \alpha \leqslant \gamma < \beta} 
 \end{eqnarray*}
We define $0 \eqdef \emptyset$ and, for any ordinal $\alpha$,
define $\mathsf{S} \alpha \eqdef
\alpha \cup \setbr{\alpha}$.  For each class $I$, we have the supremum
function  $\bigvee_I \ccolc \extord^{I} \to \extord$ and the strict
supremum function $\mathsf{ssup}_I \ccolc \Ord^I \rightarrow
\extord$.  They are connected via the equation 
 \begin{math}
  \ssup_{i
  \in I} \alpha_i   =  \bigvee_{i  \in I} \mathsf{S}\alpha_i
\end{math}.






By \cref{prop:heredwo}(\ref{item:onlyhered}), we have a bijection from $\extord$ to the
collection of all lower classes of ordinals, sending $\alpha$ to
$[0\twodots \alpha)$.   The inverse sends a lower class to its strict supremum.

By completeness, any set-based
well-ordered class $C$ is uniquely isomorphic to a lower class of ordinals, whose
strict supremum is called the \emph{order-type} of $C$.  

More generally, let $(C,<)$ be a class equipped with a \sbwf{}
relation.  We recursively define the rank function $\rankf \ccolc C \rightarrow
\Ord$, sending $x$ to $\mathsf{ssup}_{y \in J(x)}
\rankf(y)$, which by induction is also the strict supremum of the
lower set 
\begin{math}
\setbr{\rankf(y) \mid y \smin J^+(x)}
\end{math}.   The range of $\rankf$ is a lower class, and its strict
supremum called the \emph{height} of
$(C,<)$.

As an example, for $m,n \smin \nats$, define $m \prec n
\iffdef m+1 = n$.  Since $\prec$ is a set-based
\wellfound{} relation on $\nats$, we obtain the injection $\iota \ccolc \nats \rightarrow \Ord$
sending 
 $\Zero \mapsto 0$ and $\Succb{n} \mapsto \mathsf{S} (\iota n)$.   The
 height of $(\nats, \prec)$, denoted $\omega$, is also the order-type
 of $(\nats, <)$.  It is  a limit ordinal 
 if Infinity holds, and $\biginfty$ otherwise.

 \psubsection{Inductive Chains for Set-Continuous Functions} \label{sect:inchainsetcont}

 We recall a widely used notion in set theory:
 \begin{definition} \label{def:ascchain}
   An \emph{ascending chain} within an ordered collection $\cata$ is a
   monotone function $\Ord \rightarrow \cata$.  That is, a
   sequence $(x_{\alpha})_{\alpha \in \Ord}$ such that
   $\alpha \leqslant \beta$ implies $x_{\alpha} \leqslant x_{\beta}$.\
 \end{definition}
 
 \begin{proposition} \label{prop:pressup}
  For \standcoll{}s $\catd$ and $\cate$,  a set-continuous function $h \ccolc \catd \rightarrow \cate$
   preserves the supremum of every ascending chain $(x_{\alpha})_{\alpha \in \Ord}$.
 \end{proposition}
 \begin{proof}
   Define $p \eqdef \bigvee_{\alpha \in \Ord} x_{\alpha}$.

 First we show that $\waybel{p} = \bigcup_{\alpha \in
    \Ord} \waybel{x_{\alpha}}$.  We just prove $\leqslant$, as
    $\geqslant$ is obvious.  Suppose $\cate = \classfamoo{B_x}{x
      \in A}$ and $p =
    (N,F)$.  We thus have $N = \bigcup_{\alpha \in \Ord}
    M_{\alpha}$, where for each $\alpha \smin \Ord$ we have $x_{\alpha} = (M_{\alpha},F \restriction_{M_{\alpha}})$.   Given an element $y \smin
    \waybel{p}$, we have $y = (K, N \restriction_{K})$,
    for some subset $K$ of $N$. For each $k \smin
    N$, let $\overline{k}$ be
the least ordinal $\beta$ such that   $k \in M_{\beta}$.  Put $\alpha \eqdef
      \bigvee_{k \in K} \overline{k}$, and we see that $K \subseteq 
      M_{\alpha}$.  Hence $y \in \waybel{x_{\alpha}}$ as required.

        It follows that
         \begin{eqnarray*}
       h(p) & = & \bigvee_{y \in \subwaybel{p}} h(y) \\
       & = & \bigvee_{\alpha \in \Ord} \bigvee_{y\in\subwaybel{x_{\alpha}}} h(y) \\
       & = & \bigvee_{\alpha \in \Ord} x_{\alpha}
     \end{eqnarray*}
 \end{proof}

The notion of ascending chain is used as follows.
 \begin{definition} \label{def:indchain}
   Let $h$ be a set-continuous
   endofunction on a \standcoll{} $\cate$.   An \emph{inductive chain} for $h$ is an ascending chain $(x_{\alpha})_{\alpha \in \Ord}$ within $\cate$ such that
   \begin{spaceout}{rcll}
     x_{0} & = & \bot \\
    x_{\mathsf{S}\alpha} & = & h(x_{\alpha}) & \text{ for any
       ordinal
       $\alpha$} \\
     x_{\alpha} & = & \bigvee_{\beta < \alpha} x_{\beta} & \text{
       for any limit ordinal $\alpha$.}
   \end{spaceout}%
   Equivalently: such that
   $X_{\alpha} = \bigvee_{\beta < \alpha} h(\xff{\beta})$, for every
   ordinal $\alpha$.  
 \end{definition}
Clearly, there is at most one inductive chain.    It it exists, it is written $(\xf{\alpha})_{\alpha \in \Ord}$ and its supremum
$\xf{\subbiginfty}$.  Existence is unclear in general, since we
cannot recursively define a sequence of classes, but we shall
see various cases where it does exist:
\begin{itemize}
\item if $h$ has a small prefixpoint (Proposition~\ref{prop:stabord})
\item if $h$ preserves smallness (Proposition~\ref{prop:pressmall})
\item if Collection holds (Proposition~\ref{prop:collmu})
\item if $h$ arises from a scaffold (Proposition~\ref{prop:scaffclasschain}), or scaffold with
  functionalization (Proposition~\ref{prop:scaffpfnchain}).
\end{itemize}
The key property of an inductive chain is that it yields a least prefixpoint:
\begin{proposition} \label{prop:stabbig}
 Let $h$ be a set-continuous
   endofunction on a \standcoll{} $\cate$.  If $h$ has an inductive chain, then $\xf{\subbiginfty}$ is a least $h$-prefixpoint.
\end{proposition}
\begin{proof} 
   Applying Proposition~\ref{prop:pressup} to the inductive chain tells us that $\xf{\subbiginfty}$ is $h$-fixed.    To show leastness, let $z$ be a prefixpoint.  Induction on $\alpha
      \smin \Ord$ gives $\xff{\alpha} \leqslant z$.  So
      $\xff{\subbiginfty} \leqslant z$.
    \end{proof}

 For an extended ordinal $\alpha$, an inductive chain
 \emph{stabilizes} at $\alpha$ when $\xf{\alpha}$ is $h$-prefixed, or
 equivalently when $\xf{\alpha} = \xf{\subbiginfty}$.  Here is an
 application of this notion:
 \begin{proposition} \label{prop:stabord}
   For a set-continuous
   endofunction $h$ on a \standcoll{} $\cate$, the following
   are equivalent.
   \begin{itemize}
   \item $h$ has a small prefixpoint.
   \item $h$ has a small least prefixpoint.
   \item $h$ has an inductive chain within $\fund{\cate}$ that
     stabilizes at an ordinal.
   \end{itemize}
 \end{proposition}
 \begin{proof}
   If $h$ has an inductive chain within $\fund{\cate}$ that stabilizes
   at an ordinal $\alpha$, then $\xf{\alpha}$ is a small least
   prefixpoint.  Conversely, suppose $h$ has a small prefixpoint $x$.
   Then we can define the inductive chain by \wellfound{} recursion as
   a function $\Ord \rightarrow \waybel{x}$.  For stabilization, we
   prove a general fact: for any   any ascending chain
  $(x_{\alpha})_{\alpha \in \Ord}$ in $\cate$,  if $\bigvee_{\alpha
    \in \Ord} p \in \fund{\cate}$,  then there is
  $\beta \smin \Ord$ such that $(xf{\alpha})_{\alpha \geqslant
    \beta}$ is constant.   Suppose $\cate = \classfamoo{B_x}{x \in
    A}$, and $\bigvee_{\alpha \in \Ord} x_{\alpha} = (K,F)$. By hypothesis, $K$ is a set.  For each ordinal $\alpha$, we have
  $x_{\alpha} = (M_{\alpha}, F\restriction_{M_{\alpha}})$ for a subset
  $M_{\alpha}$ of $N$. For each $k \smin K$, let $\overline{k}$ be the
  least ordinal $\alpha$ such that $k \in M_{\alpha}$.  Then $\beta
  \eqdef \bigvee_{k \in K} \overline{k}$ has the required property.
 \end{proof}

Here is the most important case (for our purposes) where inductive chains exist:
\begin{proposition}  \label{prop:pressmall} Let $\cate$ be a \standcoll{}, and $h$ a set-continuous endofunction on $\cate$ that preserves smallness.   
  \begin{enumerate}
  \item \label{item:pressmallchain} $h$ has an inductive chain within
    $\fund{\cate}$, and a least prefixpoint.
  \item\label{item:pressmallstab} The following are equivalent:
    \begin{itemize}
    \item $h$ has a small prefixpoint.
    \item $\mu h$ is small.
    \item The inductive chain stabilizes at an ordinal.
    \end{itemize}
  \end{enumerate}
\end{proposition}
\begin{proof} \hfill
  \begin{enumerate}
  \item  \label{item:pressmallchainproof}  We define the inductive chain by \wellfound{} recursion as a
    function $\Ord \rightarrow \fund{\cate}$.  
  \item  \label{item:pressmallstabproof} By Proposition~\ref{prop:stabord}.
    \qedhere
  \end{enumerate}
\end{proof}

As an example of \cref{prop:pressmall}(\ref{item:pressmallchain}), if Powerset is assumed, then $\pset$ has an inductive chain consisting of sets.  This is the well-known 
cumulative hierarchy.

 We see next that  Collection guarantees the existence of inductive
 chains.   This is adapted from~\cite[Theorem 1]{Takahashi:inductionset}
 and~\cite[Theorem 6.4]{Aczel:book} and~\cite[Theorem 5.1]{AczelRathjen:notescst}.
 \begin{proposition} \assumi{Collection}
   \label{prop:collmu}
   Let $h$ be a set-continuous endofunction on a \standcoll{}
   $\cate$.  Then $h$
   has an inductive chain and a least prefixpoint.
 \end{proposition}
 \begin{proof}
   
  Via the bijection  $\cate \cong \classfamoo{B_x}{x \in A}$ we identify
   elements of 
   $\cate$ with subclasses $W$ of $\sum_{x\in A} B_x$ that are
   \emph{single-valued}, meaning that, for all $x \smin A$, there is at
   most one $y \smin B_x$ such that $\tuple{x,y} \in W$.

   A subclass $M$
  of $\Ord \times \tesum_{x \in A} B_x$ represents a sequence
  $(M_{\alpha})_{\alpha \in \Ord}$ of subclasses of $\sum_{x \in
       A}B_x$ via $M_{\alpha} \eqdef \setbr{x \smin C \mid
       \tuple{\alpha,x} \in M}$.  We say that $M$ is \emph{good} when
     $(M_{\alpha})_{\alpha \in \Ord}$ is ascending and, for any ordinal
     $\alpha$ and $\tuple{x,y} \smin M_{\alpha}$, there is $\beta <
     \alpha$ and a single-valued subset $W$ of $M_{\beta}$ such that $\tuple{x,y}
     \in  h(W)$. 

  For a good class $M$, we show by induction on  $\alpha \smin
  \Ord $ that the class  
  $M_{\alpha}$ is single-valued, as follows.  For $x \smin A$ and
  $y,y' \smin B_x$, suppose $\tuple{x,y}$ and $\tuple{x,y'}$ are in $M_{\alpha}$.  We obtain $\beta <
     \alpha$ and a single-valued subset $W$ of $M_{\beta}$ such that $\tuple{x,y}
     \in  h(W)$, and likewise $\beta' <
     \alpha$ and a single-valued subset $W'$ of $M_{\beta'}$ such that $\tuple{x,y'}
     \in  h(W')$.  Without loss of generality we have $\beta
     \leqslant \beta'$, so $W'$ is a subset of $M_{\beta'}$, and
     hence $W\cup W'$ is too.  By the inductive hypothesis at
     $\beta'$, the class 
     $M_{\beta}$ is single-valued, so the set $W \cup W'$ is too.  Next,
     $\tuple{x,y} \in h(W)$ gives $\tuple{x,y} \in h(W \cup W')$, and
     likewise $\tuple{x,y'} \in h(W \cup W')$.  So $y=y'$.

 Clearly, any union of good classes is good; in
     particular, the union  $R$ of all good sets.    We show that, for any ordinal $\alpha$, we
have
\begin{math}
  R_{\alpha}  =  \bigvee_{\beta < \alpha}  h(R_{\beta})
\end{math}.  
so that $(R_{\alpha})_{\alpha \in \Ord}$ is an inductive chain.  We just prove $\geqslant$, as $\leqslant$ is obvious.  Given $\beta <
\alpha$ and $\tuple{x,y} \smin h(R_{\beta})$, there is a subset $K$ of $R_{\beta}$
such that $\tuple{x,y} \in h(K)$.  For each $k \smin K$, we have $k
\in R_{\beta}$ since $K \subseteq R_{\beta}$, so the class $T_{k}$ of 
all good sets $X$ such that
$k \in X_{\beta}$ is inhabited.   By
Collection, there is $S \smin \prod_{k \in K}\psetinh T_k$.  For each
$k \smin K$, the set $L_k \eqdef \bigcup S_k$ is a member of $T_k$.  The set
$N \eqdef \bigcup_{k \in K} L_k$ is good; and $K \subseteq N_{\beta}$
since, for all $k \smin K$, we have 
$k \in (L_k)_{\beta} \subseteq N_{\beta}$.  Putting $N' \eqdef \setbr{\tuple{\alpha,\tuple{x,y}}} \cup
N$, we have $N'_{\beta} = N_{\beta}$, so  $\tuple{x,y} \in h(K) \subseteq h(N_{\beta}) = h(N'_{\beta})$, so $N'$ is
good.  Thus $\tuple{x,y} \in N'_{\alpha} \subseteq R_{\alpha}$ as
required.

The supremum $\bigvee_{\alpha \in \Ord}$ is a least prefixpoint
by \cref{prop:stabbig}.
 \end{proof}

\Cref{prop:gensub}(\ref{item:cogensub}) and \cref{prop:collmu} suggest the following question: for a class $C$, does every 
set-continuous endofunction on $\subclassof{C}$ have a greatest
postfixpoint?  The question is
addressed in~\cite[Theorem 6.5]{Aczel:book}  and~\cite{Aczel:relreflect}.

\psubsection{Inductive Chains for Scaffold Functions} \label{sect:indchainscaff}

We see next that every function arising from a set-based scaffold has
an inductive chain.
\begin{proposition}  \label{prop:scaffclasschain}
  Let $(D,<)$ be a  set-based
  scaffold on a class $C$, generating $M \smin \subclassof{C}$. 
  \begin{enumerate}
  \item \label{item:scaffchain} For each extended ordinal $\alpha$, we
    define  \begin{math}
      M_{\alpha}  \eqdef  \setbr{x \smin M \mid \rho(x) < \alpha}
     \end{math}.  Then $\Gamma_{(D,<)}$ has inductive chain
     $(M_{\alpha})_{\alpha \in \Ord}$ with supremum $M_{\subbiginfty}$.  
   \item \label{item:scaffchainstable} The height of $(M,<)$ is the least extended ordinal at
     which the inductive chain stabilizes.
   \item \label{item:scaffchainsmall} If $\Gamma_{(D,<)}$ preserves
     smallness, then $M$ is a set iff the height of
     $(M,<)$ is an ordinal.
  \end{enumerate}
\end{proposition}
\begin{proof} \hfill
  \begin{enumerate}
  \item \label{item:scaffchainproof} The requirements for zero, limit ordinals and $\biginfty$ are
    obvious.  The requirement for $M_{\mathsf{S}\alpha}$ holds because
    it says that an element of $M$ with rank $\leqslant \alpha$ is the
    same thing as an element of $D$ whose children are all in $M$ and
    have rank $< \alpha$.
  \item \label{item:scaffchainstableproof}  Since the height is the strict supremum of the ranks in $M$.
 \item \label{item:scaffchainsmallproof} By part~(\ref{item:scaffchainstable}) and Proposition~\ref{prop:pressmall}.
   \qedhere
  \end{enumerate}
\end{proof}
As an example of
Proposition~\ref{prop:scaffclasschain}(\ref{item:scaffchain})--(\ref{item:scaffchainstable}), we
see that $\psetset$ has an inductive chain of classes, which does not stabilize at any ordinal, since every ordinal is
its own rank.   Its supremum (union) is $\wfpure$.  Another example: for
any broad arity $F$, we see that $\incf$ has an inductive chain, whose
supremum is $\redbroad{F}$.

Here is the analogue of Proposition~\ref{prop:scaffclasschain} for partial
functions:
\begin{proposition} \label{prop:scaffpfnchain}
  Let $(D,<)$ be a set-based scaffold on a class $A$ with
    functionalization $L$ on an $A$-tuple of classes $B$, generating
    $(M,F) \smin \classfamoo{B_x}{x \in A}$.
    \begin{enumerate}
    \item \label{item:pfnchain} For each extended ordinal $\alpha$, we
      define $M_{\alpha}  \eqdef  \setbr{x \smin M \mid \rho(x) <
        \alpha}$ and $p_{\alpha} \eqdef  (M_{\alpha},F
      \restriction_{M_{\alpha}}) $.  Then $(p_{\alpha})_{\alpha \in Ord}$ is an inductive
    chain for 
    $\Delta_{(D,<)}^{L}$ with supremum $p_{\subbiginfty}$.
   \item \label{item:pfnstable} The height of $(M,<)$ is the least extended ordinal at which
     the inductive chain stabilizes.  
   \item \label{item:pfnsmall} If $\Delta_{(D,<)}^{L}$ preserves smallness, then $M$ is a
     set iff the height of
     $(M,<)$ is an ordinal.
    \end{enumerate}
\end{proposition}
\begin{proof}
  Similar to the proof of Proposition~\ref{prop:scaffclasschain}
\end{proof}

\psection{Category Theory} \label{sect:cattheory}
\psubsection{Categories and Functors} \label{sect:catfunc}

To help organize parts of our story, we use some category theory:
\begin{itemize}
\item The fact that initial algebras of isomorphic functors are
  isomorphic.
\item The notion of an ``algebraically least'' prefixpoint.
\item The fact that class summation preserves connected limits.
\end{itemize}

For the sake of this paper, a
 \emph{category} $\catc$ consists of a collection $\mathsf{ob}\; \catc$
 and an indexed family of collections $(\catc(x,y))_{x, y \in \mathsf{ob}\;\catc}$, together
 with composition and identity morphisms satisfying
 the usual three laws.  For example:
 \begin{itemize}
 \item $\bfclass$ is the category of all classes and functions.
 \item For a class $E$, we write  $\classfamcat{E}$ for the category of all class-families within $E$
   and maps between them.
 \end{itemize}
 We also consider the following functors.
 \begin{itemize}
 \item $\pset$ is an endofunctor on $\bfclass$, sending  $f \ccolc A \rightarrow B$ to the function 
  \begin{eqnarray*}
 \psetset A & \rightarrow & \psetset B \\
U &  \mapsto & \setbr{f(x) \mid x \smin
    U}
  \end{eqnarray*}
\item $\incs$ is an endofunctor on $\bfclass$.
 \item Any class $I$ gives a functor $\sum \ccolc
   \bfclass^{I} \rightarrow \bfclass$.
  \item Any set $K$ gives a functor $\prod \ccolc \bfclass^{K}
    \rightarrow \bfclass$.
 \end{itemize}
As stated in \Cref{sect:classes}, all talk about
 collections---and therefore all talk about categories---is informal.

 \psubsection{Initial Algebras} \label{sect:initalg}
  

Since initial algebras frequently appear in our story, we present the
key notions.
 \begin{definition} \label{def:algebra}
   Let $F$ be an endofunctor on a category $\catc$.
   \begin{enumerate}
     \item \label{item:falg} An \emph{$F$-algebra} $(x,\theta)$ consists of a $\catc$-object $x$ (the
     \emph{carrier}) and morphism $\theta \ccolc Fx \rightarrow x$
     (the \emph{structure}).
     \item \label{item:algmap} Given $F$-algebras $(x,\theta)$ and
     $(y,\phi)$, a \emph{map} $(x,\theta) \rightarrow (y,\phi)$ is a
     $\catc$-morphism $f \ccolc x \rightarrow y$ such that the square
     \begin{math}
       \xymatrix{
         Fx \ar[d]_{\theta} \ar[r]^-{Ff} & Fy \ar[d]^{\phi} \\
         x\ar[r]_-{f} & y
       }
     \end{math}
     commutes.
   \end{enumerate}
 \end{definition}
An $F$-algebra $(x,\theta)$ is \emph{initial} when, for every
$F$-algebra $(y,\phi)$, there is a unique map $(x,\theta) \rightarrow
(y,\phi)$.  It follows that $\theta$ is an
   isomorphism; this fact is called \emph{Lambek's lemma}, and
   inductive inversion is a special case.  Here are some examples of
   initial algebras.
   \begin{itemize}
\item The endofunctor $\psetset$ on $\bfclass$ has initial algebra $(\wfpure,\id_{\wfpure})$.
\item The endofunctor $\incs$ on 
  $\bfclass$ has initial
  algebra $(\nats,\id_{\nats})$. 
\end{itemize}
These statements hold because $\wfpure$ and $\nats$ are each
equipped with a \wellfound{}
relation by \cref{prop:gensub}(\ref{item:gensub}).
So we can use \cref{prop:wfrecur}(\ref{item:wfrecurtot})
to construct a unique algebra map to any algebra.

We next see that initial algebras of isomorphic endofunctors are
isomorphic.  Recall that, for categories $\catc, \catd$ and functors
$F,G \ccolc \catc \rightarrow \catd$, a \emph{natural isomorphism}
$\alpha \ccolc F
\cong G$ associates to each object $x \smin \catc$ a
$\catd$-isomorphism $\alpha_x \ccolc Fx \cong Gx$ in such a way that,
for each $\catc$-morphism $f \ccolc x \rightarrow y$, the square
\begin{math}
  \xymatrix{
    Fx \ar[r]_-{\alpha_x} \ar[d]_{Ff} & Gx  \ar[d]^{Gf} \\
    Fy \ar[r]^-{\alpha_y} & Gy
    }
  \end{math} commutes.
    \begin{proposition} \label{prop:initalgiso}
     On a category $\catc$, let $F$ and $G$ be endofunctors with
     initial algebras $(x,\theta)$ and $(y,\phi)$ respectively.  Then
     any natural isomorphism $\alpha
\ccolc F \cong G$ induces an isomorphism $x \cong y$.  
   \end{proposition}
   \begin{proof}
     Let $f$ be the unique $F$-algebra map  $(x,\theta) \rightarrow (y,
      \phi \circ \alpha_y)$ and $g$ the unique $G$-algebra map $(y,\phi) \rightarrow
      (x,\theta \circ \alpha^{-1}_x)$.  The diagram
          \begin{math}
       \xymatrix{
         Fx \ar[r]^{Ff} \ar[ddd]_{\theta} & Fy \ar[d]_{\alpha_y} 
         \ar[r]^{Fg} & Fx \ar[d]_{\alpha_x} \ar@/^{1pc}/[dd]^{\id_{Fx}} \\
         & Gy \ar[dd]_{\phi} \ar[r]^{Gg} & Gx \ar[d] _{\alpha^{-1}_x} \\
         & & Fx \ar[d]^{\theta} \\
         x \ar[r]_{f} & y \ar[r]_{g} & x
         }
       \end{math} commutes,  so $g \circ f$ is an $F$-algebra endomap
       on $(x,\theta)$.  
 Since $\id_x$ is too, they are equal.  Likewise $f \circ g = \id_y$.
   \end{proof}

   \psubsection{Algebraically Least Prefixpoints} \label{sect:algleast}

 Observe that our examples $\wfpure$ and $\nats$ serve as both least
 prefixpoints and initial algebras.  We shall now formulate this
 situation in general.
   \begin{definition} \label{def:ordcat}
     An \emph{ordered category} consists of
     \begin{itemize}
     \item a category $\catc$
     \item an order $\leqslant$ on the collection $\ob \catc$
     \item for each pair of objects  $x \leqslant y$, an \emph{inclusion}
     morphism $\iof{x}{y} \ccolc x \rightarrow y$
   \end{itemize}
 For each object $x$ we must have $\iof{x}{x} = \id_x$, and 
  for any objects $x \leqslant y \leqslant z$  the triangle 
   \begin{math}
     \xymatrix{
       x \ar[r]^{\iof{x}{y}} \ar[dr]_{\iof{x}{z}} & y \ar[d]^{\iof{y}{z}} \\
       & z
       }
      \end{math} must commute.
    \end{definition}
\noindent    Our main examples of ordered categories are $\bfclass$ and
    $\classfamcat{E}$ for
    any class $E$.
\begin{definition} \label{def:monotonefunc}
  Let $\catc$ and $\catd$ be ordered categories.  A functor $F \ccolc \catc
\rightarrow \catd$ is \emph{monotone} when, for any $\catc$-objects $x
\leqslant y$, we have $Fx \leqslant Fy$ and $F\iof{x}{y} =
\iof{Fx}{Fy}$.
\end{definition}
Now let $F$ be a monotone endofunctor on an ordered category $\catc$.
Any $F$-prefixpoint $x$ gives an $F$-algebra $\algof{x} \eqdef (x,
\iof{Fx}{x})$.  Furthermore, for $F$-prefixpoints $x \leqslant y$, we
have an algebra map $\iof{x}{y} \ccolc \algof{x} \rightarrow
\algof{y}$.  
\begin{definition} \label{def:algleast}
  A least $F$-prefixpoint $a$ is \emph{algebraically least} when the $F$-algebra $\algof{a}$ is initial.
\end{definition}
For example:
\begin{itemize}
\item $\psetset$ is a monotone endofunctor on $\bfclass$, and $\wfpure$ is its algebraically least prefixpoint.
\item $\incs$ is a monotone endofunctor on $\bfclass$, and
  $\nats$ is its algebraically least prefixpoint.
\end{itemize}
Here is a final observation (not used in the sequel).   As stated
above, 
our main examples are where $\catc$ is either $\bfclass$ or 
$\classfamcat{E}$ for a class $E$.  In these cases, the requirement in
Definition~\ref{def:algleast}  for $a$ to
be least is redundant.  For suppose that $a$ is an
$F$-prefixpoint such that $\algof{a}$
is initial.  To show leastness of $a$, it suffices to show minimality, since
the ordered collection $(\ob \catc, \leqslant)$ has binary meets.  So
suppose $x$ is an $F$-prefixpoint such that $x \leqslant a$.  Since
$\algof{a}$ is initial, the
algebra morphism $\iof{x}{a} \ccolc \algof{x} \rightarrow \algof{a}$
has a section, so it is surjective, giving $x=a$.

   \psubsection{Connected Limits} \label{sect:connlimits}

 Before we can discuss limits, we have to formulate the notion of a diagram.
   
 A \emph{quiver} (also called a directed multigraph) consists of a set
of nodes and a set of edges, with each edge having a source node and a
target node. For nodes $m,n$ we write $m \leftrightarrow n$ when
 either an edge $m \rightarrow n$ or an edge $m \rightarrow
n$ exists.  A quiver $\bbI$ is
\emph{connected} when there is a node $a$ such that every node
$m$ satisfies $a \leftrightarrow^* m$.

Let $\bbI$ be a quiver.  For a category $\catc$, an \emph{$\bbI$-indexed diagram} $D$ in
  $\catc$ consists of
 \begin{itemize}
 \item an object $D_m$ for each node $m$
 \item a morphism
   $D_f \ccolc D_m \rightarrow D_n$ for each edge
   $f \ccolc m \rightarrow n$.
 \end{itemize}
In the case that $\catc = \bfclass$, we write $\lim_{m \in \bbI} D_m$ for the
\emph{limit} of $D$, i.e.,  the class of  all $x \smin \prod_{m \in \bbI} D_m$
 satisfying $D_f(x_m) = x_n$ for every edge $f \ccolc m \rightarrow
  n$.   Now we come to the promised result:
 

  \begin{proposition} \label{prop:sumconn}
    For any class $I$, the functor $\sum \ccolc
   \bfclass^{I} \rightarrow \bfclass$ preserves connected limits.
   More precisely, for a connected quiver $\bbI$ and $\bbI$-indexed
   diagram $D$ in $\class^I$, the map
   \begin{displaymath}     \alpha \ccolc \sum_{i \in I} \lim_{m \in \bbI}
   \dof{i}{m} \rightarrow \lim_{m \in \bbI} \sum_{i \in I} \dof{i}{m}
   \end{displaymath}
   sending $\tuple{i,[x_m]_{m \in \bbI}}$ to $[\tuple{i,x_m}]_{m \in
     \bbI}$ is bijective.\footnote{This result is an instance
     of~\cite[Theorem 4.2]{nlab:connectedlimit} via the 
equivalence
\begin{displaymath}
\bfclass^{I} \simeq \classfamcat{I} = \bfclass / I
\end{displaymath}
that sends $B$ to the
class-family $(i)_{\tuple{i,x} \in \sum_{i \in I} B_i}$.}
\end{proposition}
\begin{proof}
  Let $y \smin \lim_{m \in \bbI} \sum_{i \in I} \dof{i}{m}$.  For any
  edge $f \ccolc m
  \rightarrow n$, we see that $y_m$ and $y_n$ have the same left
  component, since 
  \begin{eqnarray*}
    (\sum_{i \in I} \dof{i}{f})(y_m) & = & y_n
  \end{eqnarray*}
  Hence, for any nodes $m$ and $n$ such that $m \leftrightarrow^* n$,
  we see that 
  $y_m$ and $y_n$ have the same left component.  By hypothesis,
there is a node $a$ such, for every node $m$, we have 
 $a \leftrightarrow^* m$.  Write $i$ for the left component of
$y_a$.  Then for each node $m$, we can express $y_m$ as
$\tuple{i,x_m}$.  We obtain $x \in \lim_{m \in \bbI} \dof{i}{m}$, since
for any edge $f \ccolc m
\rightarrow n$, we have
\begin{eqnarray*}
    (\sum_{i \in I} \dof{i}{f})(y_m) & = & y_n \\
  \text{i.e.,\ }\quad \tuple{i, \dof{i}{f}(x_m)} & = & \tuple{i,x_n} \\
 \text{so} \quad \dof{i}{f}( x_m) & = & x_n
  \end{eqnarray*}
 So $\tuple{i,[x_m]_{n \in \bbI}}$, which is the unique
 $\alpha$-preimage of $y$, is in $\sum_{i \in I} \lim_{m \in \bbI}
   \dof{i}{m}$.  
 \end{proof}
 
 \ppart{Wide and Broad principles for sets} \label{part:widebroadsets}

\psection{Infinity Principles}  \label{sect:infprin}
  \psubsection{Wide Infinity} \label{sect:wideinf}

  Let us review what we have seen previously.  The class $\nats$ of
  all natural numbers is the algebraically least
  $\incs$-prefixpoint.  The axiom of Infinity says that a
  $\incs$-prefixed set exists, which is equivalent to $\nats$ being a
  set.

 Now we continue.  A set $K$ (which in this context may be called an \emph{arity}) gives rise to a monotone endofunctor $\inck$ on
 $\bfclass$, sending $X$ to $\incof{X^K}$.  Thus a class $X$ is
 $\inck$-prefixed iff it contains $\Zero$ and, for any $K$-tuple $y$
 within $X$, contains $\Succb{y}$.
 
 The algebraically least such class is called the \emph{class of all simple $K$-wide
  numbers}, denoted by $\redwide{K}$.  To show it exists, 
note that $\incs$ arises
from the following set-based scaffold on $\totall$: a parent is either $\Zero$, which has no children, or
$\Succb{y}$, for any $K$-tuple $y$, in
which case the set of children is $\setbr{y_k \mid k \smin K}$.   So
\cref{prop:gensub}(\ref{item:gensub}) 
gives a least $\inck$-prefixed class, and
\cref{prop:wfrecur}(\ref{item:wfrecurtot}) gives the
initial algebra property.

\begin{example} \label{example:kwide}
Define $K$ to  be the arity $\setbr{0,1,2}$.  The following are 
  simple $K$-wide numbers:
  \begin{itemize}
  \item
    \begin{math}
      \Zero
    \end{math}
 \item
   \begin{math}
      \Succj
      \begin{bmatrix}
        \Zero \\
        \Zero \\
        \Zero 
      \end{bmatrix}
    \end{math}
  \item
    \begin{math}
      \Succj
      \begin{bmatrix}
        \Zero \\
        \Zero \\
   \Succj
      \begin{bmatrix}
        \Zero \\
        \Zero \\
        \Zero 
      \end{bmatrix}  
      \end{bmatrix}
    \end{math}
  \end{itemize}
\end{example}
\noindent We can visualise a wide number as  a two-dimensional \wellfound{} tree.  For example,
the last number in Example~\ref{example:kwide} is
      visualized in Figure~\ref{fig:kvisual}, using
        the vertical dimension for
        $\begin{bmatrix}{\vdots}\end{bmatrix}$ 
        and the horizontal dimension for internal structure, with the
        root appearing at the left and the $\mathsf{Nothing}$-marked
        leaves at the right.
 \begin{figure}  
   \begin{displaymath}
     \xymatrix{
       & \Zero &  \\
  \Succj  \ar@{-}[ur]^{0} \ar@{-}[r]^{1} \ar@{-}[dr]_{2}   & \Zero & \Zero \\
       & \Succj  \ar@{-}[ur]^{0} \ar@{-}[r]^{1} \ar@{-}[dr]_{2}  &  \Zero \\
       & & \Zero  
       }
        \end{displaymath}
        \caption{Visualization of a simple wide number}\label{fig:kvisual}
      \end{figure}
      The axiom of \emph{Simple Wide Infinity} says, for every
      arity 
$K$, that a $\inck$-prefixed set exists; this is equivalent to $\redwide{K}$ being a set.  

There is an alternative version, formulated as follows.

A \emph{signature} is a family of sets  $S= (K_i)_{i \in I}$, where 
we call $i \smin I$
a \emph{symbol} and the set $K_i$ its \emph{arity}.  It gives rise to a monotone endofunctor $H_S$ on $\bfclass$, sending
$X$ to $\sum_{i \in I}X^{K_i}$.   Thus a class $X$ is
 $H_S$-prefixed iff, for any $i \smin I$ and $K_i$-tuple $y$ within
 $X$, it contains  $\tuple{i,y}$.  As before, there is an 
 algebraically least such class, which we call the  \emph{class of all $S$-wide
  numbers}, denoted by $\wide{S}$.  

The axiom of \emph{\full{} Wide Infinity} says, for every signature
$S$, that an $H_S$-prefixed set exists; this is
equivalent to $\wide{S}$ being a set.  Previously this axiom has appeared
  in \cite[page 15]{vandenBerg:hab} under the name ``Smallness of W-types'',
 alluding to the notion of W-type in type theory~\cite{MartinLoef:inttypetheory,MoerdijkPalmgren:wellfoundtreecat,AbbottAltenkirchGhani:containers}.

 

Before giving the main result of the section, we give a 
result that allows us to work with injections rather than
inclusions:
\begin{proposition} \label{prop:injalg}  \hfill
  \begin{enumerate} 
  \item \label{item:injinf} Infinity is equivalent to \emph{Dedekind Infinity}: There is a set $X$ and
    injection $\incof{X} \rightarrow X$.\footnote{For a class $X$,  an
      injection $\incof{X} \rightarrow X$  corresponds to a unary
      Dedekind encoding on $X$.  When such an injection  
      exists,  $X$ is said to be \emph{Dedekind-infinite.}.}
  \item Let $K$ be a set.  Then $\redwide{K}$ is a set iff there is
    a set $X$ and injection $\inck X \rightarrow X$.
  \item \label{item:injalgsig} Let $S$ be a signature.  Then
    $\wide{S}$ is a set iff there
    is a set $X$ and injection $H_S X \rightarrow X$.
  \end{enumerate}
\end{proposition}
\begin{proof}
  We just prove part~(\ref{item:injalgsig}), as the rest is
  similar.  Any $H_S$-prefixed class $X$, such as $\mu H_S$, is equpped with an injection
  $\iof{H_SX}{X} \ccolc H_SX \rightarrow X$.   Conversely, given a class $X$ and
  injection $\theta \ccolc H_S X \rightarrow X$, the unique algebra morphism
  $f \ccolc \algof{\mu H_S} \rightarrow (X,\theta)$ is injective.  
  (For this says that any $x \smin \mu H_S$ is the only $f$-preimage of its $f$-image,
  which is proved by induction on $x$.)  
  So if $X$ is a set,
  then $\mu H_S$ is a set.  
\end{proof}

\begin{theorem} \label{prop:powinfty}
  Simple Wide Infinity,  Full Wide Infinity and Exponentiation $+$ Infinity
  are all equivalent.
\end{theorem}
\begin{proof}
  To show that Full Wide Infinity implies Simple Wide Infinity: given a set
  $K$, define $S$ to be the signature
    $(L_i)_{i \in \setbr{0,1}}$ where $L_0 \eqdef \emptyset$ and $L_1
    \eqdef K$.  We have a natural isomorphism $\inck \cong H_S$, so
    \cref{prop:initalgiso} tells us that 
    $\redwide{K}$ is a set iff $\wide{S}$ is a set.

To show that Simple Wide Infinity implies Infinity, we have a natural
isomorphism $\incs \cong \mathsf{Maybe}_1$, so \cref{prop:initalgiso} tells us that $\nats$ is a set iff
$\redwide{1}$ is a set.
  
  To show that  Full Wide Infinity implies Exponentiation, let $A$ and $B$ be sets.  Define the signature $S$ with
nullary symbols $(\mathsf{Leaf}_b)_{b \in B}$ and an $A$-ary symbol
$\mathsf{Node}$. 
 Consider the injection $B^A
\rightarrow \wide{S}$ sending a function $f$ to the $S$-wide number
\begin{displaymath}
  \tuple{\mathsf{Node},[\tuple{\mathsf{Leaf}_{f(a)},[\,]}]_{a \in A}}
\end{displaymath}
Since $\wide{S}$ is a set, $B^A$ is a set.

To show that Exponentiation$+$Infinity implies Full Wide Infinity, let $S$
be a signature.
By Exponentiation, $H_S$ restricts to an
endofunctor on $\mathbf{Set}$.  Following~\cite{Barr:terminalcoalg}, we form the $\omega^{\op}$-chain 
  \begin{displaymath}
    \xymatrix{
 1 & H_S1 \ar[l]_-{\tuple{}} & H_S^21  \ar[l]_-{H_S\tuple{}} & {} \ar[l]_-{H_S^2\tuple{}} \, \cdots
      }
    \end{displaymath}
  which may be called the ``coinductive chain''.   (Intuitively $H_S^n 1$ is the set
    of $S$-trees with stumps at level $n$.)  Let $M$ be the limit and
    $\theta \,\colon\,H_S M \to M$ the canonical map.  (Intuitively
    $M$ is the set of non-\wellfound{} $S$-trees.)   We note, by
    \cref{prop:sumconn}, that $H_S$ preserves connected
    limits.  So $\theta$ is bijective, and therefore
  $\wide{S}$  is a set by \cref{prop:injalg}(\ref{item:injalgsig}).

To show that Simple Wide Infinity implies Full Wide Infinity, let $S =
(K_i)_{i \in I}$ be a signature.  Define  the set $\ovs
\eqdef I+\sum_{i \in I}K_i$, so $\redwide{\ovs}$ is a set.  We obtain an injection $H_S
\redwide{\ovs} \rightarrow \redwide{\ovs}$ sending $\tuple{i, [a_k]_{k
        \in K_i}}$ to $\Succb{[b_p]_{p \in \ovs}}$, where
      \begin{spaceout}{rcll}
          b_{\itinl i} & \eqdef & \Succb{[\Zero]_{p \in \ovs}} \\
          b_{\itinl j} & \eqdef & \Zero  & \text{(for $j \smin I, j \not= i$)} \\
          b_{\itinr \tuple{i,k}} & \eqdef & a_k  & \text{(for $k \smin
            K_i$)} \\
          b_{\itinr \tuple{j,k}} & \eqdef & \Zero  & \text{(for $j
              \smin I, j \not= i, k \smin K_j$)} \\
      \end{spaceout}
So $\wide{S}$ is a set by \cref{prop:injalg}(\ref{item:injalgsig}).
\end{proof}

\paragraph*{Related work} \quad  The idea that every signature has an initial algebra (meaning: algebra
carried by a set) has often appeared
in the literature.  Apparently, the earliest ZFC proof was given
by S\l{}omi\'{n}ski~\cite{Slominski:theorymodinfoprel}, and the earliest 
ZF proof by 
Kerkhoff~\cite{Kerkhoff:freealg}.  Furthermore, as explained
in~\cite{PareSchumacher:absfamadjfunct,Blass:initalg,MoerdijkPalmgren:wellfoundtreecat},
the result holds in any topos with a natural number
object.  

\psubsection{Broad Infinity} \label{sect:broadinftech}

Now we come to the main principle of the paper, which was briefly described
 in \Cref{sect:nutshell}.  We shall use the following notation.   For any class $A$ and $A$-tuple
 of classes $B$, we write
 \begin{eqnarray*}
   \summay_{x \in A} B(x) & \eqdef & \setbr{\rStart} \cup
                                     \setbr{\rBuild{x}{y} \mid x \smin
                                     A, y \smin B(x)}
 \end{eqnarray*}
 Thus we have a bijection $\summay_{x \in A}B(x) \cong \incs
 \sum_{x\in A}B(x)$, sending $\rStart$ to $\Zero$ and $\rBuild{x}{y}$
 to $\Succ{\tuple{x,y}}$.  

A \emph{broad arity} is a function  $F
\ccolc \totall \rightarrow \set$.  It gives
rise to a monotone endofunction $\incf$ on $\class$, sending $X$ to
$\summay_{x \in X} X^{Fx}$.   Thus a class $X$ is $\incf$-prefixed when it contains $\rStart$ and, for any  $x \smin X$
and $Fx$-tuple $y$ within $X$, contains $\rBuild{x}{y}$.

The least such class is
called the \emph{class of all simple $F$-broad numbers} and written
$\redbroad{F}$.   To show it exists, note that $\incf$ arises from the
following set-based scaffold on $\totall$: a parent is either $\rStart$,
which has no children, or $\rBuild{x}{y}$, for a thing $x$ and
$Fx$-tuple $y$, in which case the set of children is $\setbr{x} \cup
\setbr{y_k \mid k \smin K}$.  So we apply 
\cref{prop:gensub}(\ref{item:gensub}).

Although $F$ is defined over $\totall$, only its restriction to
$\redbroad{F}$ matters.   More precisely, for broad arities $F$ and
$F'$ with the same restriction to $\redbroad{F} \cap \redbroad{F'}$,
we have $\redbroad{F} = \redbroad{F'}$.  Proof: the class $\redbroad{F} \cap
\redbroad{F'}$ is both $\incf$-prefixed and
$\incfprime$-prefixed.

Let us see some examples of simple broad numbers.
\begin{example} \label{example:simplebroad}
   Define $F$ to be the broad arity that sends $\rBuild{\rStart}{[\,]}$ to $\setbr{0,1}$, and everything else to $\emptyset$.  
   The following are simple $F$-broad numbers:
   \begin{itemize}
   \item  
     \begin{math}
       \rStart
     \end{math}
   \item
     \begin{math}
        \rBuild{\rStart}{[\,]}
     \end{math}
   \item
     \begin{math}
           \rBuild{\rBuild{\rStart}{[\,]}}{
      \begin{bmatrix}
\explind{0 & \mapsto &} \rStart \\ \explind{1 & \mapsto &} \rBuild{\rStart}{[\,]}
\end{bmatrix}
} 
     \end{math}
   \item
     \begin{math}
          \rBuild{\rBuild{\rBuild{\rStart}{[\,]}}{
      \begin{bmatrix}
        \explind{0 & \mapsto &} \rStart \\
        \explind{1 & \mapsto &} \rBuild{\rStart}{[\,]}
\end{bmatrix}
}}{[\,]} 
     \end{math}
   \end{itemize}
\end{example}
\noindent We can visualise a broad number as a \wellfound{}
three-dimensional tree, using the vertical dimension for
$\begin{bmatrix}{\vdots}\end{bmatrix}$, the horizontal dimension for
$\rBuild{-}{-}$ and the depth dimension for internal
structure.  The root appears at the front, and the
$\rStart$-marked leaves at the rear.

The axiom scheme of \emph{Simple Broad Infinity} says that, for every
broad arity $F$, a $\incf$-prefixed set exists; this 
is equivalent to $\redbroad{F}$ being a
set.

There is an alternative version, formulated as follows. 
Note that
$\sig$ is the class of all signatures.   Any function $G \ccolc \totall
\rightarrow \sig$, called a \emph{broad signature}, gives rise to a monotone endofunction $\incg$ on
$\class$, sending $X$ to $\summay_{x \in X} H_{Gx} X$.  Thus a
class $X$ is $\incg$-prefixed iff it contains $\Start$ and, for any $x
\smin X$ with $Gx = (K_i)_{i \in I}$ and any $i
  \smin I$ and $K_i$-tuple $y$ within $X$, contains  $\yBuild{x}{i}{y}$.

As before, there is a least such class, which we call the  \emph{class
  of all $G$-broad numbers}, denoted by 
$\broad{G}$.   For  functions $G, G' \ccolc \totall \rightarrow \sig$ with the same restriction to $\broad{G} \cap \broad{G'}$,
we have $\broad{G} = \broad{G'}$.

The axiom scheme of \emph{\full{}
  Broad Infinity} says that, for every broad signature $G$, a $\incg$-prefixed set exists; this is equivalent
to $\broad{G}$ being a set.  

\begin{theorem} \hfill \label{prop:broadinf}
  \begin{enumerate}
  \item Simple Broad Infinity and Full Broad Infinity are equivalent. \label{item:broadsimplefull}
  \item Full Broad Infinity implies Full Wide Infinity. \label{item:broadimplwide}
  \end{enumerate}
\end{theorem}
\begin{proof} \hfill
  \begin{enumerate}
  \item \label{item:broadsimplefullproof} To show ($\Leftarrow$), let $F$ be a broad arity.  We recursively
   define the injection $f$ on $\redbroad{F}$ that sends
   \begin{itemize}
   \item $\rStart$ to $\Start$.
   \item $\rBuild{w}{[a_k]_{k \in Fw}}$ to
     $\yBuild{f(w)}{0}{[f(a_k)]_{k \in Fw}}$.
   \end{itemize}
 Define the broad signature $G$ sending
   \begin{itemize}
   \item $f(x)$, for $x \smin \redbroad{F}$, to $(Fx)_{i \in
       \setbr{0}}$
   \item everything else to the empty signature.
   \end{itemize}
   Observe that $f$ sends each $w \smin \redbroad{F}$ to a $G$-broad
number, by induction on $w$.  Since
$\broad{G}$ is a set, $\redbroad{F}$ is a set.

To show ($\Rightarrow$), for a signature $S = (K_i)_{i \in I}$,
  we write $\ovs
    \eqdef I+\sum_{i \in I}K_i$.  Given a broad signature $G$, we
    recursively define an injection $g$ on $\broad{G}$ whose range
    does not contain $\rStart$, as follows.  It sends 
  \begin{itemize}
 \item $\Start$ to $\preBuild{\rStart}{[\,]}$
 \item $\yBuild{w}{i}{[a_k]_{k \in K_i}}$, where $Gw = (K_i)_{i \in
       I}$, to $\preBuild{\Succm{g(w)}}{[b_p]_{p \in
         \overline{Gw}}}$, using the definitions
      \begin{spaceout}{rcll}
          b_{\itinl i} & \eqdef & \rBuild{\rStart}{[\,]} \\
          b_{\itinl j} & \eqdef & \rStart  & \text{(for $j \smin I, j \not= i$)} \\
          b_{\itinr \tuple{i,k}} & \eqdef & \Succm{g(a_k)}  & \text{(for $k \smin
            K_i$)} \\
          b_{\itinr \tuple{j,k}} & \eqdef & \rStart  & \text{(for $j
              \smin I, j \not= i, k \smin K_j$).} \\
          \end{spaceout}%
        \end{itemize}
        Let $F$ be the broad arity
          that sends
          \begin{itemize}
       \item $\Succb{g(w)}$, for $w \smin \broad{G}$, to
         $\overline{Gw}$
  \item   everything else, including $\rStart$, to $\emptyset$.
  \end{itemize}
For any $w \smin \broad{G}$, we have $g(w) \in \redbroad{F}$, by
induction on $w$.  Since $\redbroad{F}$ is a set, 
 $\broad{G}$ is a set.
\item \label{item:broadimplwideproof} Given a signature $S$, let $G$ be the broad signature
    sending everything to $S$.  Recursively define the injection $g
    \ccolc \wide{S} \rightarrow \broad{G}$ sending $\tuple{i,[a_k]_{k
        \in K_i}}$ to $\yBuild{\Start}{i}{[g(a_k)]_{k \in Fa}}$.  Since $\broad{G}$ is a set, $\wide{S}$ is a set.   \qedhere
  \end{enumerate}
\end{proof}

\psection{Introducing Rubrics}  \label{sect:rubrics}
\psubsection{Generating a subset} \label{sect:setgen}

Having completed our presentation of the ``plausible'' principles, we
now move on to the ``useful'' ones.   First we consider how to 
generate a subset of a class using a suitable
collection of rules, called a \emph{rubric}. 
\begin{definition} \label{def:rubric}
 Let $C$ be a class.
  \begin{enumerate}
  \item \label{item:widerule} A \emph{wide rule  on $C$} consists of a set $K$ (the
    \emph{arity}) and a function  $R \,\colon\,C^K \rightarrow
    \famof{C}$.   It is written $\tuple{K,R}$, and the collection of all such is denoted $\widerule{C}$.
\item  \label{item:widerubric}  A \emph{wide rubric on $C$} is a family of wide rules---i.e.,
  a set $I$ and function $r \ccolc I \rightarrow \widerule{C}$.  It is written
  $(r_i)_{i \in I}$, and the collection
  of all such is denoted $\allrub{C}$.
\item \label{item:broadrule} A \emph{broad rule on $C$} consists of a set $L$ (the 
    \emph{arity}) and a function $S \ccolc C^L \rightarrow
    \allrub{C}$.  It is written $\tuple{L,S}$, and the collection of
    all such is denoted 
    $\broadrule{C}$.
\item \label{item:broadrubric}   A \emph{broad rubric on $C$} is a family of broad rules---i.e., a set $J$ and function $s \ccolc J \rightarrow \broadrule{C}$.  It is
  written $(s_j)_{j \in J}$ , and the
  collection of all such is denoted $\allbroadrub{C}$. 
  \end{enumerate}
\end{definition}


\begin{example} \label{example:rubric}
  Here is a wide rubric on $\nats$, consisting of two wide rules.
  \begin{itemize}
  \item {Rule 0} is binary and sends $
    \begin{bmatrix}
\explind{0 & \mapsto &}m_0 \\ \explind{1 & \mapsto &}m_1
\end{bmatrix}
\mapsto (m_0+m_1+p)_{p \geqslant 2m_0}$.
   \item {Rule 1} is nullary and sends $[\,] \mapsto (2p)_{p\geqslant 50}$.
   \end{itemize}
 Informally, these rules prescribe when an element of $\nats$ is acceptable.  Rule 0 says that, if $m_0$ and $m_1$ are acceptable, then $m_0+m_1+p$ is acceptable for all $p \geqslant 2m_0$.  Rule 1 says that $2p$ is acceptable for all $p \geqslant 50$.  So 100, 102 and 402 are acceptable, and by induction every acceptable number is $\geqslant 100$.
\end{example}

\begin{example} \label{example:broadrubric}
  Here is a broad rubric on $\nats$, consisting of two broad rules.  Broad
  rule 0 is nullary and sends $[\,]$ to the wide rubric described in
  Example~\ref{example:rubric}.  Broad rule 1 is unary.  It sends 
  $[7]$ to the the following
  wide rubric, consisting of one wide rule.
\begin{itemize}
\item {Rule 0} is binary and sends $
  \begin{bmatrix}
\explind{0 & \mapsto & }m_0 \\\explind{1 & \mapsto &} m_1
\end{bmatrix}
\mapsto (m_0+m_1+500p)_{p \geqslant 9}$.
\end{itemize}
It sends $[100]$ to the following wide rubric, consisting of three
wide rules.
\begin{itemize}
\item {Rule 0} is ternary and sends $
  \begin{bmatrix}
\explind{0 & \mapsto &}m_0 \\ \explind{1 & \mapsto &}m_1 \\ \explind{2 & \mapsto &}m_2
\end{bmatrix}
\mapsto (m_0+m_1m_2 +p)_{p \geqslant 17}$.
\item {Rule 1} is nullary and sends $[\,] \mapsto (p)_{p \geqslant 1000}$.
 \item {Rule 2} is binary and sends $
   \begin{bmatrix}
\explind{0 & \mapsto &}m_0 \\ \explind{1 & \mapsto &}m_1
\end{bmatrix}
\mapsto (m_1+p)_{p \geqslant 4}$.
\end{itemize}
And it sends $[n]$, for $n \smin \nats \setminus \setbr{7,100}$, to
the empty wide rubric.

 Informally, these rules prescribe when an element of $\nats$ is acceptable.   For example, if $100$ is acceptable and $m_0,m_1,m_2$ are too, then so is $m_0 + m_1m_2 + p$ for all $p \geqslant 17$.  So 100, 102, 402 and 107 are acceptable, and by induction every acceptable number is $\geqslant 100$.
\end{example}

To make the notion of ``acceptable element''
precise, we proceed as follows.
\begin{definition}  \label{def:clcomplclass} Let $C$ be a class.  A
  subclass $X$ is 
  \begin{itemize}
  \item  \emph{$\tuple{K,R}$-closed}, for a wide rule $\tuple{K,R}$ on
    $C$, when the family $R(x)$ is within $X$ for all $x \smin X^K$.  
 \item \emph{$\catr$-complete}, for a wide rubric $\catr = (r_i)_{i
     \in I}$ on $C$, when $X$ is $r_i$-closed for all $i \smin I$.
 \item \emph{$\tuple{L,S}$-closed}, for a broad rule $\tuple{L,S}$ on
   $C$, when $X$ is $S(y)$-complete for all $y \smin X^L$.
  \item \emph{$\cats$-complete} for a broad rubric $\cats = (s_j)_{j
      \in J}$ on $C$, when $X$ is $s_j$-closed for all $j \smin J$.
  \end{itemize}
\end{definition}
Our next task is to give an alternative formulation of completeness, using a
notion of ``plate''.  First we give preliminary notions that do not
depend on a rubric.
\begin{definition} \hfill \label{def:preplate}
   \begin{enumerate}
   \item \label{item:widepreplate} A \emph{wide preplate} is a triple $w =
\tuple{i,[x_k]_{k \in K},p}$, where $K$ can be any set.  The
\emph{component set} of $w$ is
\begin{eqnarray*}
\compsof{w} & \eqdef &
  \setbr{x_k \mid k \smin K}
\end{eqnarray*}
  \item \label{item:broadpreplate} A \emph{broad preplate} is a 5-tuple $w = \tuple{j,[y_l]_{l \in L},i, [x_k]_{k
    \in K},p}$, where $L$ and $K$ can be any sets.  The 
\emph{component set} of $w$ is
\begin{eqnarray*}
\compsof{w} & \eqdef & \setbr{y_l \mid l \smin L} \cup
\setbr{x_k \mid k \smin K}
\end{eqnarray*}
\item A preplate $w$ is \emph{within} a class $X$ when $\compsof{w} \subseteq X$.
   \end{enumerate}
 \end{definition}
Now we turn to rubrics.
\begin{definition} \label{def:plate} Let $C$ be a class.
  \begin{enumerate}
  \item \label{item:wideplate} Let $\catr$ be a wide rubric on
    $C$.    An \emph{$\catr$-plate} is a wide preplate 
    \begin{eqnarray*}
      w & = &  \tuple{i,
      [x_k]_{k \in K}, p}
    \end{eqnarray*}
within $C$ such that 
     \begin{itemize}
\item writing $\catr   = (\tuple{K_i,R_i})_{i \in I}$, we have $i
  \smin I$ and $K= K_i$
\item writing $R_j(x) =  (b_p)_{p \in
                                                       P}$, we have $p
                                                     \smin P$.
                                                   \end{itemize}
                                                   The \emph{result} of $w$ is $b_p$.
  \item \label{item:broadplate} Let $\cats$ be a broad rubric on
    $C$.  An \emph{$\cats$-plate} is a broad preplate
    \begin{eqnarray*}
w & = &  \tuple{j,
      [y_l]_{l \in L},i, [x_k]_{k \in K}, p}
    \end{eqnarray*}
within $C$ such that 
    \begin{itemize}
\item writing $\cats = (\tuple{L_j,S_j})_{j \in J}$, we have $j
      \smin J$ and $L = L_j$
  \item writing $S_j(y)   = (\tuple{K_i,R_i})_{i \in I}$, we have $i
    \smin I$ and $K= K_i$
     \item writing $R_i(x) =  (b_p)_{p \in
                                                       P}$, we have $p
                                                    \smin P$. 
      \end{itemize}
The \emph{result}  of $w$ is $b_p$.  
  \end{enumerate}
\end{definition}

\begin{definition} \label{def:rubfun}
  Let $\catr$ be a rubric on a class $C$.  
 We write  $\Gamma_{\catr}$ for the set-continuous endofunction on $\subclassof{C}$ sending $X$ to
  the class of all results  of $\catr$-plates within $X$.  
\end{definition}
Clearly a subclass of $C$ is $\catr$-complete iff $\Gamma_{\catr}$-prefixed.

The least $\catr$-complete
  subclass of $C$, if it exists, is said to be \emph{$\catr$-generated}. 
   We shall see below (\cref{prop:powallgen}) that this class exists if Powerset or
Collection holds. However, I do not know whether our base theory alone 
guarantees its existence.  
  In
  any case, a rubric's purpose  
  is to generate a
  \emph{set}, not merely a class.    So we formulate the following principles.
   \begin{itemize}
 \item The \emph{Wide \setgen{}} scheme says that any wide
   rubric on a class generates a subset.
\item The \emph{Broad \setgen{}} scheme says that any broad
   rubric on a class generates a subset.
 \end{itemize}
By Proposition~\ref{prop:stabord}, a rubric $\catr$ on a class $C$ generates a
subset iff $C$ has an $\catr$-complete subset.
 
 \psubsection{Application: Grothendieck Universes} \label{sect:groth}

 For this section, \textbf{assume Powerset $+$ Infinity}.

 As promised in \Cref{sect:plausibleuseful}, we see the utility of Broad \setgen{}: it gives
 Grothendieck universes without any detour via notions of cardinal or ordinal.
 \begin{definition}  \label{def:groth} 
 A \emph{Grothendieck universe} is a transitive set $\univv$ with the following properties:
   \begin{itemize}
         \item For every set $A \in \univv$, we have $\pset A \in \univv$.
    \item $\nats \in \univv$.
    \item For every set of sets $\cata \in \univv$, we have $\bigcup \cata \in \univv$.
    \item For every set $K \in \univv$ and $K$-tuple $[a_k]_{k \in K}$ within $\univv$, we have $\setbr{a_k \mid k \in K} \in \univv$.
    \end{itemize}
  \end{definition}

   \begin{proposition} \label{prop:groth}  
    Broad \setgen{} implies the ``Axiom of Universes'': For every set $X$, there is a least Grothendieck universe $\univv$ with $X \subseteq \univv$.  
  \end{proposition}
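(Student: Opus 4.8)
The plan is to encode the clauses defining a Grothendieck universe, together with the demand ``$X \subseteq \univv$'', as a single broad rubric $\catb$ on $\totall$, and then to identify $\genof{\catb}$---which is a set by Broad Set Generation---with the least Grothendieck universe containing $X$.

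Concretely, take the basic rubric $\rinit$ of $\catb$ to consist of: a nullary rule $[\,]\mapsto(x)$ for each $x\in X$ (a set-indexed family of rules, since $X$ is a set); a nullary rule $[\,]\mapsto(\nats)$; a unary \emph{transitivity} rule $[a]\mapsto(b)_{b\in\elset{a}}$; a unary \emph{union} rule $[a]\mapsto(\bigcup_{b\in\elset{a}}\elset{b})$; and a unary \emph{powerset} rule $[a]\mapsto(\pset(\elset{a}))$. For the triggered part, let each $K\in\totall$ trigger the rubric $\rtrig{K}$ with the single rule of arity $\elset{K}$ given by $[a_k]_{k\in\elset{K}}\mapsto(\setbr{a_k\mid k\in\elset{K}})$. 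Using element sets $\elset{-}$ everywhere makes these honest total functions without appeal to Decidable Sethood, and the leastness argument below shows that the extra firings (e.g.\ the powerset rule on a non-set, which yields only $\pset\emptyset=1$) add nothing that is not already forced in any Grothendieck universe. The crucial point is that the replacement/collection clause---``$K\in\univv$ and a $K$-tuple within $\univv$ yield $\setbr{a_k\mid k\in K}\in\univv$''---has arity $K$ that is itself an element of the universe under construction, so it cannot be captured by an ordinary rubric; this is exactly where Broad Set Generation (and not merely Set Generation) is needed.

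Now one verifies the two halves. First, $U\eqdef\genof{\catb}$ is a Grothendieck universe with $X\subseteq U$: the inclusions $X\subseteq U$ and $\nats\in U$ are immediate; transitivity of $U$ comes from the transitivity rule; closure under $\bigcup$ on sets of sets comes from the union rule (for a set of sets $\cata$ we have $\elset{\cata}=\cata$ and $\elset{b}=b$ for $b\in\cata$); closure under $\pset$ on sets comes from the powerset rule; and closure under collection comes from the triggered rules $\rtrig{K}$. Second, if $\univv$ is any Grothendieck universe with $X\subseteq\univv$, then $\univv$ is $\catb$-inductive---checked rule by rule: the $X$- and $\nats$-rules because $X\cup\setbr{\nats}\subseteq\univv$; the transitivity rule because $\univv$ is transitive; the powerset rule because $1\in\univv$ (as $\emptyset=\Zero\in\nats\in\univv$ and $\univv$ is $\pset$-closed) and $\pset(\elset{a})=\pset{a}$ when $a$ is a set; and $\rtrig{K}$ because $\elset{K}\in\univv$, so the collection clause applies. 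Hence $U=\genof{\catb}\subseteq\univv$, so $U$ is the least such universe.

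The one genuinely non-mechanical step is checking that an arbitrary Grothendieck universe is closed under the union rule, i.e.\ that $a\in\univv$ implies $\bigcup_{b\in\elset{a}}\elset{b}\in\univv$. This needs a small auxiliary lemma: $a\in\univv$ implies $\elset{a}\in\univv$, obtained by running the truth value $\tval{\isset{a}}\in\pset{1}\in\univv$ through the collection and union clauses; then $\setbr{\elset{b}\mid b\in\elset{a}}$ is a set of sets lying in $\univv$ by collection, and its union is the required set. Everything else is routine bookkeeping about families and the $\elset{-}$ coercions, the conceptual content being concentrated in the single ``broad'' rule that encodes the collection clause.
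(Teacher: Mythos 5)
Your proposal is correct and follows essentially the same route as the paper: encode the universe clauses as a broad rubric whose basic part handles the elements of $X$, $\nats$, transitivity, unions and powersets, and whose triggered part captures the collection clause (the genuinely ``broad'' ingredient), then check that $\genof{\catb}$ is a universe and that every universe containing $X$ is $\catb$-inductive. The only divergence is bookkeeping: the paper defines the rubric on $\allsets$ with rules ``supported on'' the relevant cases (via Proposition~\ref{prop:extfun} and Corollary~\ref{cor:nontot}), whereas you totalize the rules on $\totall$ with $\elset{-}$ coercions, which works but forces you to prove the auxiliary lemma that $a \in \univv$ implies $\elset{a} \in \univv$ (and note that this lemma, applied uniformly, also disposes of your powerset-rule check without the apparent case split on whether $a$ is a set, which would otherwise need Decidable Sethood).
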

  \begin{proof}
  We define a broad rubric $\catb$ on $\totall$, consisting of two
  rules.   Broad rule 0 is nullary and sends $[\,]$ to the
  following wide rubric indexed by $X+4$.
    \begin{itemize}
    \item To achieve $X \subseteq \univv$, rule $\itinl x$ (for $x \in X$) is nullary, and sends $[\,]$ to $(x)\inone$.
    \item To achieve transitivity, rule $\itinr 0$ is unary, sending $[A]$ to $(b)_{b \in A}$
      if $A$ is a set, and the empty family otherwise.
    \item  Rule $\itinr 1$ is nullary, and sends $[\,]$ to $(\nats)$. 
    \item Rule $\itinr 2$ is unary, sending $[\cata]$ to $(\bigcup
      \cata)$ if $\cata$ is a set of sets, and the empty signature otherwise.
    \item Rule $\itinr 3$ has arity 1, sending $[A]$ to $(\pset A)$ if
      $A$ is a set, and the empty signature otherwise.
    \end{itemize}
Broad rule 1 is unary.  For any set $B$, it sends $[B]$ to
the rubric consisting of one $B$-ary rule that sends $[a_k]_{k \in B}$
to $(\setbr{a_k \mid k \in B})\inone$.  If $b$ is not a set, then
Broad Rule 1 sends $[b]$ to the empty rubric.  This completes the definition of $\catb$.  The set that it generates
is the desired Grothendieck universe.  \qqed
\end{proof}

 \psubsection{Derivations} \label{sect:genfam}

Intuitively, when we have a rubric on a class $C$, each acceptable element 
$x \smin C$ has one or more ``derivations'' that explain why it is acceptable.
 \begin{example} \label{example:widederiv}
For the wide rubric in Example~\ref{example:rubric}:
     \begin{itemize}
  \item $\tuple{1,[\,],50}$ derives 100.
  \item $\tuple{1,[\,],51}$ derives 102.
  \item $\tuple{0,
      \begin{bmatrix}
       \explind{ 0 & \mapsto &} \tuple{1,[\,],50} \\
       \explind{ 1  & \mapsto &} \tuple{1,[\,],50}
\end{bmatrix}
,202}$ and  $\tuple{0,
\begin{bmatrix}
  \explind{0 & \mapsto &} \tuple{1,[\,],50} \\
  \explind{1 & \mapsto &} \tuple{1,[\,],51}
\end{bmatrix}
,200}$ derive 402.
\end{itemize}
 Note that each derivation is a wide preplate whose components are derivations.
\end{example}

\begin{example} For the broad rubric in
  Example~\ref{example:broadrubric}: \label{example:broadderiv}
  \begin{itemize}
  \item $\tuple{0,[\,],1,[\,],50}$ derives 100.
  \item $\tuple{0,[\,],1,[\,],51}$ derives 102.
  \item $\tuple{0,[\tuple{0,[\,],1,[\,],50}],2,
      \begin{bmatrix}
\explind{0 & \mapsto &} \tuple{0,[\,],1,[\,],50} \\
\explind{1 & \mapsto &} \tuple{0,[\,],1,[\,],51}
\end{bmatrix},5}$ derives 107.
\end{itemize}
Note that each derivation is a broad preplate whose components are derivations.
\end{example}
Given a rubric $\catr$ on a class $C$, we would like to define the class
$\derivsof{\catr}$ of all \emph{$\catr$-derivations}.  Each
$\catr$-derivation $x$ will have an \emph{overall result} $\ovresof{x}
\in C$.   We shall call $(\derivsof{\catr}, \ovreso)$ the
 \emph{$\catr$-derivational} class-family within $C$---it is the
 algebraically least prefixpoint of an endofunctor $\Delta_{\catr}$ on
 $\classfamof{C}$ that we shall define.   To do this, we adapt the notion of
 $\catr$-plate (Definition~\ref{def:plate}).  First we give
 preliminary notation that does not depend on a rubric.
 \begin{definition}  \label{def:cwise} \hfill
   \begin{enumerate}
   \item \label{item:cwisewide} For a wide preplate $w =
\tuple{i,[x_k]_{k \in K},p}$, and a function $h$ on
  $\compsof{w}$, we write
  \begin{eqnarray*}
  \cwise{h}(w) & \eqdef & \tuple{i,[h(x_k)]_{k \in K} ,p}
  \end{eqnarray*}
  \item \label{item:cwisebroad} For a {broad preplate} $w = \tuple{j,[y_l]_{l \in L},i, [x_k]_{k
    \in K},p}$, and function $h$ on
  $\compsof{w}$, we write
    \begin{eqnarray*}
  \cwise{h}(w) & \eqdef & \tuple{j,[h(y_l)]_{l \in L},i,[h(x_k)]_{k \in K},p}
\end{eqnarray*}
   \end{enumerate}
 \end{definition}
 Now we turn to rubrics.
 \begin{definition} \label{def:platefam} Let $(M,F)$ be a class-family
   within a class $C$.
   \begin{enumerate}
   \item\label{item:wideplatefam}  Let $\catr$ be a wide rubric on $C$.  An
     \emph{$(\catr,M,F)$-plate} $w$ is a wide preplate within $M$
     whose $\cwise{F}$-image is an $\catr$-plate.  The \emph{result} of
     $w$ is 
     the result of $\cwise{F}(w)$. 
   \item  \label{item:broadplatefam} Let $\cats$ be a broad rubric on $C$.  An 
     \emph{$(\cats,M,F)$-plate} $w$ is a broad preplate within $M$
     whose $\cwise{F}$-image is an $\cats$-plate.  The  \emph{result} of
     $w$ is the result of $\cwise{F}(w)$. 
   \end{enumerate}
 \end{definition}

 \begin{definition} \label{def:rubfunfam}
    Let $\catr$ be a rubric on a class $C$.
    We define the set-continuous endofunctor
     $\gamfamcatr$ on $\classfamcat{C}$ sending
     \begin{itemize}
     \item a class-family $(M,F)$ to $(N,G)$, where $N$ is the class of all
       $(\catr,M,F)$-plates and $G$ sends each such plate to its
       result
     \item a map $h \ccolc (M,F) \rightarrow (M',F')$ to
       the map $w \mapsto \cwise{h}(w)$.
     \end{itemize}
   \end{definition}

\begin{proposition} \label{prop:gamfamscaff}
  Let $\catr$ be a rubric on a class $C$.  Then
  $\gamfamcatr$ has an inductive chain and an algebraically least prefixpoint.  
\end{proposition}
\begin{proof}
We show $\gamfamcatr$ has an inductive chain and a least prefixpoint
by \cref{prop:scaffpfnchain}(\ref{item:pfnchain}).  Noting that
$\classfamof{C} = \classfamoo{C}{x \in \totall}$, we express $\gamfamcatr$ as $\delwl$, for a set-based scaffold
  $(D,<)$ on $\totall$ with functionalization $L$ on $x \mapsto C$.  

 We give the wide case, as the broad case is similar.
 Firstly, we have  a set-based scaffold $(D,<)$ on
 $\totall$, where $D$ is the class of all wide preplates and $<$ is
 componenthood.    
 
Now let $\catr $ be a wide rubric on
   $C$.  Then $(D,<)$ has the following
 functionalization $L$ to $(C)_{x \in \totall}$.  For a wide preplate
 $u$, 
  we define $\domof{L_u}$ to be the class of all functions $f
 \ccolc J(u) \rightarrow C$ such that $\cwise{f}(u)$ is an
 $\catr$-plate, and $\overline{L_u}$ sends such a function $f $ to the
 result of $\cwise{f}(u)$.  

 It is then evident that $\gamfamcatr = \delwl$, so we obtain a least
 $\gamfamcatr$-prefixpoint,  and
 \cref{prop:wfrecur}(\ref{item:wfrecurtot}) gives the
 initial algebra property.
\end{proof}
Thus we define the $\catr$-derivational class-family
$(\derivsof{\catr},\ovreso) \eqdef \mu \gamfamcatr$.  
  We want to know whether this is a family---i.e., whether $\derivsof{\catr}$ is a
set.  So we formulate the following
principles.  
\begin{itemize}
  \item \emph{Wide Derivation Set}: Any wide rubric on a class
    has a derivation set.
   \item \emph{Broad Derivation Set}: Any broad
     rubric on a class has a derivation set.
   \end{itemize}

 \psubsection{Application: Tarski-style Universes} \label{sect:tarski}

{}  For this section, \textbf{assume Powerset $+$ Infinity}.
 
In the type theory literature~\cite{MartinLoef:inttypetheory}, a 
``Tarski-style universe'' is a family of types that is closed under various
constructions, such as $\sum$.
Furthermore, the existence of such universes follows from various ``induction-recursion'' principles~\cite{DybjerSetzer:indindrec,GhaniHancock:contmonir}.

In a similar way, we show that Broad
Derivation Set yields the existence of arbitrarily large Tarski-style
universes.

\begin{definition} \label{def:tarski}
  A \emph{Tarski-style universe} consists of the following data.
  \begin{itemize}
  \item A family of sets $(D_m)_{m \in M}$.  Elements of $M$ are
    called \emph{codes}.
   \item For each code $m$, a code $\tpow{m}$ and bijection $D_{\tpow{m}}
    \cong \pset D_{m}$.
  \item A code $\tzero$ and bijection $D_{\tzero} \cong \emptyset$.
  \item A code $\tnat$ and bijection $D_{\tnat} \cong \nats$.
  \item For each code $m$ and tuple of codes $[g_k]_{k \in D_m}$, a
    code $\tsigma{m}{g}$ and bijection $D_{\tsigma{m}{g}} \cong
    \sum_{k \in D_m} D_{g(m)}$.
  \end{itemize}
\end{definition}

\begin{definition} \label{def:tarskiext}
For a family of sets $(B_a)_{a \in A}$, a \emph{Tarski-style
  universe extension} consists of the following data:
\begin{itemize}
\item A Tarski-style universe  $(D_m)_{m \in M}$.
\item For each $a \smin A$, a code $j(a)$ and bijection
    $D_{j(a)} \cong B_{a}$.  
\end{itemize}
\end{definition}

\begin{proposition} \label{prop:tarskiext}
 Broad Derivation Set implies that every family of sets $(B_a)_{a \in
   A}$ has a Tarski-style universe extension.
\end{proposition}
\begin{proof}
We shall construct the extension so that all the required
bijections are identities.  To begin, define a broad
    rubric $\catb$ on $\allsets$, consisting of two broad rules.
    Broad rule 0 is nullary and sends $[\,]$ to the following rubric indexed by $A +\setbr{0,1,2}$.
    \begin{itemize}
    \item Rule $\itinl a$ (for $a \in A$) has arity 0 and sends $[\,]$
      to $(B_a)\inone$.
    \item Rule $\itinr 0$ has arity 1 and sends $X$ to $(\pset X)\inone$.
    \item Rule $\itinr 1$ has arity 0 and sends $[\,]$ to $(\emptyset)\inone$.
    \item Rule $\itinr 2$ has arity 0 and sends $[\,]$ to $(\nats)\inone$.
    \end{itemize}
   Broad rule 1 is unary, and sends $[X]$ to the following rubric 
   indexed by $\setbr{0}$.
   \begin{itemize}
    \item Rule $0$ has arity $X$ and sends $[Y_x]_{x \in X}$ to
      $(\sum_{x \in X}Y_x)\inone$.
      \end{itemize}
 The $\catb$-derivational family is the desired universe
 extension, where we define
 \begin{eqnarray*}
   j(a) & \eqdef & \tuple{0,[\,],\itinl a,[\,], *} \\
   \tpow{m} & \eqdef & \tuple{0,[\,],\itinr 0,[m], *}  \\
   \tzero & \eqdef & \tuple{0,[\,],\itinr 1,[\,], *} \\
   \tnat & \eqdef & \tuple{0,[\,],\itinr 2,[\,], *} \\
   \tsigma{m}{g} & \eqdef & \tuple{1,[m],0,g, *} 
 \end{eqnarray*}
\end{proof}

\psubsection{Proving the Derivation Set Principles} \label{sect:provederfam}

The following is the
central result of the paper (at least for
 people who do not accept AC), since it says that \emph{plausible principles
 entail useful ones}.   
\begin{proposition} \label{prop:implderiv} \hfill  
  \begin{enumerate}
  \item \label{item:wideimplderiv} Full Wide Infinity implies Wide Derivation Set.
  \item \label{item:broadimplderiv} Full Broad Infinity implies Broad Derivation Set.
  \end{enumerate}
\end{proposition}
\begin{proof} \hfill
  \begin{enumerate}
 \item  \label{item:wideimplderivproof} Let $\catr = (\tuple{K_i,R_i})_{i \in I}$ be a wide rubric on a
  class $C$.   Define the signature $S$ to be $(K_i)_{i \in I}$.  We
  recursively associate to each $t \in \mus$ a family $(M_t,F_t)$ 
    within $C$ as follows.  For $t = \tuple{i,[t_k]_{k \in
      K_i}}$, an element of $M_t$ is a triple $\tuple{i,[m_k]_{k \in
      K_i},p}$ where $i \in I$ and $m \in \prod_{k \in K_i} M_{t_{k}}$
  with $R_{i}[F_{t_k} (m_k)]_{k \in K_i} = (b_p)_{p \in P}$ and $p \in
  P$, and $F_t$ sends this element to $b_p$.   For any $t,t' \in
  \mus$, if $M_t\cap M_t'$ is inhabited, then $t=t'$, by induction on
  $t$.

 We define the family $(M,F)$ within $C$ to be the union of all
 these.  Thus we define $M \eqdef \bigcup_{t \in \mus} M_t$, and $F$ sends
  $m \smin M_t$ to $F_t (m)$.  It is then evident that $(M,F)$ is the
  derivational family of $\catr$.

\item  \label{item:broadimplderivproof} Let 
$\cats=(\tuple{L_j,S_j})_{j \in J}$ be a broad rubric on a class $C$.
We recursively define the
function $\theta$  on $\derivsof{\cats}$ that sends $\tuple{j,[y_l]_{l \in L_j},i,[x_k]_{k \in
    K_i},p}$ to
\begin{displaymath}
  \yBuild{\yBuild{\yBuild{\Start}{j}{[\theta y_i]_{i \in
          L_j}}}{i}{[\theta x_k]_{k \in K_i}}}{p}{[\,]}
\end{displaymath}
By induction, $\theta$ is injective.  To show that $\derivsof{\cats}$
is a set, it suffices to give a broad
signature $G$ such that the range of $\theta$ is included in
$\broad{G}$, which by Full Broad Infinity is a set.  Define $G \ccolc \totall
\rightarrow \allsets$ to send
\begin{itemize}
\item $\Start$ to $ (L_j)_{j \in J}$
\item $\yBuild{\Start}{j}{[\theta y_l]_{l \in L_j}}$ obtained from 
  \begin{itemize}
  \item an index $j \smin J$ and $y \smin \derivsof{\cats}^{L_j}$, giving
    \begin{eqnarray*}
      S_j[\ovresof{y_l}]_{l \in L_j} & = & (\tuple{K_i,R_i})_{i \in I}
    \end{eqnarray*}
  \end{itemize}
 to $ (K_i)_{i \in
   I}$
 \item $ \yBuild{\yBuild{\Start}{j}{[\theta y_i]_{i \in
         L_j}}}{i}{[\theta x_k]_{k \in K_i}}$ obtained from 
   \begin{itemize}
   \item an index $j \smin J$ and $y \smin \derivsof{\cats}^{L_j}$, giving
    \begin{eqnarray*}
      S_j[\ovresof{y_l}]_{l \in L_j} & = & (\tuple{K_i,R_i})_{i \in I}
    \end{eqnarray*}
   \item and an index $j \smin J$ and $x \smin \derivsof{\cats}^{L_j}$, giving
     \begin{eqnarray*}
       R_i[\ovresof{x_k}]_{k
                                                       \in K_i} & = & (b_p)_{p \in
                                                         P}
     \end{eqnarray*}
   \end{itemize}
   to $(\emptyset)_{p \in P}$
 \item everything else to the empty signature.
\end{itemize}
 By induction, for
                                                     every
                                                     $\cats$-derivation
                                                     $x$, we see that
                                                     $\theta x$ is a
                                                     $G$-broad
                                                     number, as
                                                     required.  \qedhere
  \end{enumerate}
\end{proof}




\psection{Special Kinds of Rubric} \label{sect:specialrub}
\psubsection{Injective Rubrics} \label{sect:injrub}

\begin{definition} \label{def:injrub}
Let $C$ be a class. A rubric $\catr$ on $C$ is \emph{injective} when any two
    $\catr$-plates with the same result are equal.
\end{definition}

  
 We now give the following principles.
 \begin{itemize}
   \item \emph{Injective Wide \setgen{}}: Any injective wide
   rubric on a class generates a subset.
\item \emph{Injective Broad \setgen{}}: Any injective broad
  rubric on a class generates a subset.
\end{itemize}

 
 \begin{proposition} \label{prop:setgeninf} \hfill
   \begin{enumerate}
   \item \label{item:widesginf} Injective Wide \setgen{} implies Full Wide Infinity.
   \item \label{item:broadsginf} Injective Broad \setgen{} implies
     Full Broad Infinity.
   \end{enumerate}
 \end{proposition}
 \begin{proof} \hfill
   \begin{enumerate}
     \item \label{item:widesginfproof} Let $S = (K_i)_{i
    \in I}$ be a signature.  A class is $H_S$-prefixed iff it is
  $\hat{S}$-complete, writing $\hat{S}$ for the following injective 
     wide rubric on $\totall$: it consists of $I$ rules, and rule $i
     \smin I$ has arity $K_i$ and sends  a $K_i$-tuple
       $x$ to the singleton $(\tuple{i,x})$.  So $\wide{S}$ is generated by $\hat{S}$,
       and is therefore a set by Injective Wide \setgen{}.
     \item \label{item:broadsginfproof} Let  $G$ be a broad signature.  A class is
  $\incg$-prefixed iff it is $\hat{G}$-complete, writing $\hat{G}$ for
  the injective 
   broad
     rubric on $\totall$ consisting of two rules:
     \begin{itemize}
     \item Rule 0 is nullary and sends $[\,]$ to the wide rubric
       consisting of a nullary broad rule that returns the singleton
       $(\Zero)$.
     \item Rule 1 is unary and sends $[w]$, where $Gw = (K_i)_{i
         \in I}$ to the wide rubric
       consisting of $I$ rules, where rule $I$ has arity $K_i$ and
       sends a $K_i$-tuple $x$
       to the singleton $(\yBuild{w}{i}{x})$.
     \end{itemize}
   So  $\broad{G}$ is generated by $\hat{G}$, and is therefore a set
   by Injective Broad \setgen{}.  \qedhere
   \end{enumerate}
 \end{proof}

 \psubsection{Comparing Rubrics} \label{sect:comparerub}


Suppose we have two rubrics $\catr$ and $\cats$ on the same class, and want to show they are ``essentially the same''.  Giving a natural isomorphism $\Delta_{\catr} \cong \Delta_{\cats}$
suffices for our purposes:
\begin{proposition} \label{prop:deliso}
  Let $C$ be a class.  Let $\catr$ and $\cats$ be rubrics on $C$, and
$\alpha \ccolc \Delta_{\catr} \cong \Delta_{\cats}$ a natural isomorphism.
\begin{enumerate}
  \item \label{item:deriviff} $\catr$ has a derivation set iff $\cats$ does.
  \item \label{item:delisorange} The square
    \begin{math}
      \xymatrix{
        \classfamof{C} \ar[r]^-{\Delta_{\cats}}
        \ar[d]_{\Delta_{\catr}} & \classfamof{C} \ar[d]^{\mathsf{Range}} \\
        \classfamof{C} \ar[r]_-{\mathsf{Range}} & \subclassof{C}
        }
      \end{math}
      commutes.
  \item \label{item:delisogam} The functions $\Gamma_{\catr}$ and $\Gamma_{\cats}$ are equal.
  \item \label{item:delisoinj} $\catr$ is injective iff $\cats$ is.
  \end{enumerate}
\end{proposition}
\begin{proof} \hfill
  \begin{enumerate}
 \item \label{item:deriviffproof}  By \cref{prop:initalgiso}.
  \item \label{item:delisorangeproof} Obvious.
  \item  \label{item:delisogamproof} The range of $\Delta_{\catr}(X,
    \iof{X}{C})$ is $\Gamma_{\catr}X$, and the range of  $\Delta_{\cats}(X,
    \iof{X}{C})$ is $\Gamma_{\cats}X$.  These ranges are the same, by part~(\ref{item:delisorange}).  
\item \label{item:delisoinjproof}  Since $\catr$-plates are the same thing as $(\catr,C,\id_C)$-plates.
  \qedhere
  \end{enumerate}
\end{proof}



 \psubsection{\Qwide{} Rubrics} \label{sect:qwide}

For any rubric $\catr$, we shall now consider the class $\arsof{\catr}$ of all arities that
appear inside it, defined explicitly as follows.
\begin{definition} \label{def:arsof}
  Let $C$ be a class.
  \begin{enumerate}
  \item \label{item:arwiderub} For a wide rubric $\catr = (\tuple{K_i,R_i})_{i \in I}$ on
    $C$, we obtain the set 
    \begin{eqnarray*}
      \arsof{\catr} & \eqdef & \setbr{K_i \mid i \smin I} 
    \end{eqnarray*}
  \item \label{item:arbroadrule} For a broad rule $\tuple{L,S}$ on $C$, we obtain the class
    \begin{eqnarray*}
      \arsof{S} & \eqdef & \bigcup_{x \in C^L} \arsof{S(x)}
    \end{eqnarray*}
\item\label{item:arbroadrub} For a broad rubric $\cats = (\tuple{L_j,S_j})_{j \in J}$ on $C$,
  we obtain the class
    \begin{eqnarray*}
      \arsof{\cats} & \eqdef & \bigcup_{j \in J} (\setbr{L_j} \cup \arsof{S_j}) 
    \end{eqnarray*}
  \end{enumerate}
\end{definition}

The fact that $\arsof{\catr}$ is a set for a wide rubric $\catr$
motivates the following.
\begin{definition} \label{def:qwide}
Let $C$ be a class.  A \emph{\qwide{} rubric} on $C$ is a broad rubric
$\cats$ such that $\arsof{\cats}$ is a set.
\end{definition}
We see that wide and \qwide{} are  
essentially the same for our purposes:
\begin{proposition} Let $C$ be a class. \label{prop:wideqwide}
  \begin{enumerate}
  \item \label{item:wideqwide}
    For any wide rubric $\catr$ on $C$, there is a \qwide{} rubric
    $\cats$ on $C$ and natural isomorphism $\Delta_{\catr} \cong
    \Delta_{\cats}$.
   \item \label{item:qwidewide} Conversely, for any \qwide{} rubric $\cats$ on $C$, there is
     a wide rubric $\catr$ on $C$ and natural isomorphism $\Delta_{\cats} \cong
     \Delta_{\catr}$.
   \end{enumerate}
\end{proposition}
\begin{proof} \hfill
  \begin{enumerate}
  \item \label{item:wideqwideproof}
    Let $\cats$ be given by a single nullary broad rule that sends $[\,]$ to
  $\catr$.  For a class-family $(M,F)$ within $C$, the
  $(\catr,M,F)$-plate $\tuple{i,[x_k]_{k \in K},p}$
  corresponds to the $(\cats,M,F)$-plate $\tuple{*,[\,],i,[x_k]_{k \in
      K},p}$.
\item  \label{item:qwidewideproof}  We introduce notation, for sets $A$ and $B$.  Given an $A$-tuple
    $u$ and $B$-tuple $v$, we write $\copair{u}{v}$ for the $A+B$ tuple whose $r$th component is $u_a$ or $v_b$
    according as $r$ is $\itinl a$ or $\itinr b$.   

    Given a \qwide{} rubric $\cats$ on $C$, we form a wide rubric
    indexed by the set $\arsof{\cats}^2$ as follows:
 \begin{eqnarray*}
      \catr & \eqdef & (\tuple{L+K,R_{L,K}})_{{L,K} \in \arsof{\cats}}
 \end{eqnarray*}
where $R_{L,K}$ sends $\copair{y}{x}$, for $y \smin C^L$ and $x \smin
C^K$, to the family  $(c_{n})_{n \in N}$ defined as follows.   An
      element of $N$ is a triple $\tuple{j,i,p}$ such that
      \begin{itemize}
      \item writing $\cats =  (\tuple{L_j,S_j})_{j \in J}$, we have $j
        \smin J$ and $L_j = L $ 
   \item writing  $S_j(y) = (\tuple{K_i,R_i})_{i \in I}$, we have  $i \smin I$ and
        $K_i = K$
      \item writing $R_i(x) = (z_p)_{p \in P}$, we have $p \smin P$
      \end{itemize}
      with $c_{\tuple{j,i,p}} \eqdef z_p$.  For a
      class-family $(M,F)$ within $C$, an $(\cats,M,F)$-plate
      \begin{displaymath}
        \tuple{j,[y_l]_{l \in L},i,[x_k]_{k \in K},p}
      \end{displaymath}
      corresponds to the  $({\catr},M,F)$-plate 
      \begin{displaymath}
         \tuple{\tuple{L,K},
                                                                \copair{y}{x},
                                                                \tuple{j,i,p}}   \qedhere
                                                            \end{displaymath}
  \end{enumerate}
\end{proof}


\begin{corollary} \label{cor:qwide} \hfill
  \begin{enumerate}
    \item \label{item:qwidesetgen} Wide \setgen{} is equivalent to
    \emph{\Qwide{} \setgen{}}: Any 
    \qwide{} rubric on a class generates a subset.
  \item \label{item:quasideriv} Wide Derivation Set is equivalent to \emph{\Qwide{} Derivation
    Set}: Any \qwide{} rubric on a
  class has a derivation set.
   \item \label{item:quasiinj} Injective Wide \setgen{} is equivalent to
    \emph{Injective \Qwide{} \setgen{}}: Any injective 
    \qwide{} rubric on a class generates a subset.
  \end{enumerate}
\end{corollary}



\psubsection{Application: Rubrics on a Set} \label{sect:rubonset}

We now consider the
special case of a rubric on a {set}.    Clearly such a rubric generates a
subset, but does it have a derivation set?  The following answer is adapted from~\cite{HankMcBrideGhaniMalatestaAltenkirch:smallir}.
\begin{proposition} \label{prop:exprubs} \hfill
  \begin{enumerate}
  \item \label{item:allrubs} Exponentiation is equivalent to the assertion ``Any broad
    rubric on a set is \qwide{}.''
  \item \label{item:rubonset} Full Wide Infinity is equivalent to the assertion ``Any rubric on a
set has a derivation set.''
  \end{enumerate}
\end{proposition}
\begin{proof} \hfill
  \begin{enumerate}
  \item Since ($\Rightarrow$) is evident, we just prove
    ($\Leftarrow$).  For a function $f$ on a set, we write
    $\graphof{f}$ for $f$ regarded as a set of ordered pairs.

    Given sets $A$ and $B$, define the broad rubric $\cats$ on $B$
    consisting of one $A$-ary rule, sending a tuple $x$ to the wide
    rubric consisting of one $\graphof{x}$-ary rule, sending each
    tuple to the empty family.  If $\cats$ is \qwide{}, then $B^A$ is
    a set.
  \item  For ($\Rightarrow$),  Full Wide Infinity gives Wide
    Derivation Set by
    Proposition~\ref{prop:implderiv}(\ref{item:wideimplderiv}) and
    hence \Qwide{} Derivation Set by \cref{cor:qwide}(\ref{item:quasideriv}).   It also gives
    Exponentiation, so any broad rubric on a set is \qwide{} by part (\ref{item:allrubs}), and
    therefore has a derivation set.

For ($\Leftarrow$), given a signature $S =
  (K_i)_{i \in I}$, form a wide rubric on 1 via
   \begin{eqnarray*}
  \catr & \eqdef &    (\tuple{K_i, [*]_{k \in K} \mapsto (*)})_{i \in I}
    \end{eqnarray*}
Recursively define the bijection $\theta
  \ccolc \wide{S} \cong \derivsof{\catr}$ sending $\tuple{i,[x_k]_{k
      \in K_i}}$ to $\tuple{i,[\theta(x_k)]_{k \in K}, *}$.   Thus
  $\wide{S}$ is a set iff 
  $\derivsof{\catr}$ is.
  \qedhere
  \end{enumerate}
   \end{proof}

\psection{Properties of Rubric Functions} \label{sect:provesetgen}

\psubsection{Preservation of Smallness and Injectivity}  \label{sect:pressmallinj}

Given a rubric $\catr$ on a class $C$, does the endofunction
$\Gamma_{\catr}$ on $\subclassof{C}$ restrict to one on $\psetset C$?
Likewise, does the endofunction $\Delta_{\catr}$ on $\classfamof{C}$ restrict to one
on $\famof{C}$ or $\injcfof{C}$ or $\injfof{C}$?
The following results answer these questions.

\begin{proposition} \label{prop:injpresinj}
  Let $\catr$ be a rubric on a class $C$.  Then $\catr$ is injective
  iff the endofunction $\Delta_{\catr}$ restricts to one on $\injcfof{C}$.
\end{proposition}
\begin{proof}
For ($\Rightarrow$), let $(M,F)$ be an injective class-family, and
  let $x$ and $x'$ be $(\catr,M,F)$-plates with the same result $c$.
  Then the $\catr$-plates $\cwise{F}(x)$ and $\cwise{F}(x')$ have 
  result $c$, so, by injectivity of $\catr$, they are equal.
  Injectivity of $F$ implies that $\cwise{F}$ is injective, so $x=x'$.

  For ($\Leftarrow$), observe that an $\catr$-plate is the same thing 
  as an $(\catr,M,\id_C)$-plate, with the same result.  Thus the 
  injectivity of the 
  class-family $\Delta_{\catr}(C,\id_C)$ means that $\catr$ is injective.
\end{proof}

\begin{proposition} \label{prop:rubfun}
  Each of the following is equivalent to Exponentiation.
  \begin{enumerate}
  \item \label{item:wideclass} For any rubric $\catr$ on a class $C$, the endofunction $\Gamma_{\catr}$
    restricts to one on $\psetset C$.
  \item  \label{item:wideclassfam} For any rubric $\catr$ on a class $C$, the endofunction $\Delta_{\catr}$
    restricts to one on $\famof{C}$.
     \item \label{item:injwideclass} For any injective rubric $\catr$ on a class $C$, the endofunction $\Gamma_{\catr}$
    restricts to one on $\psetset C$.
  \item  \label{item:injwideclassfam} For any injective rubric $\catr$ on a class $C$, the endofunction $\Delta_{\catr}$
    restricts to one on $\injfof{C}$.
  \end{enumerate}
\end{proposition}
\begin{proof}
  Assume Exponentiation.  The $\catr$-plates within a set form a set,
  giving~(\ref{item:wideclass}).  For any
  family $(M,F)$ within $C$, the $(\catr,M,F)$-plates form a set,
  giving~(\ref{item:wideclassfam}).

  Clearly~(\ref{item:wideclass}) implies~(\ref{item:injwideclass}),
  and~(\ref{item:wideclassfam}) implies~(\ref{item:injwideclassfam}) by \cref{prop:injpresinj}($\Rightarrow$).

  For any set $K$, define the  injective family
  $\makelitfam{K} \eqdef (K,\id_K)$, and the injective wide rubric $\makerub{K}$ on $\totall$ consisting
  of a single $K$-ary rule that sends $x \smin \totall^K$ to $(x)$.
  To deduce Exponentiation from~(\ref{item:injwideclass})
 or~(\ref{item:injwideclassfam}), let $A$ and $B$ be sets.  Then we
 have 
  \begin{eqnarray*}
    \Gamma_{\makerub{A}}(B) &  = & B^A \\
   \Delta_{\makerub{A}}(\makelitfam{B}) & = &
                                     (x)_{\tuple{*,x,*} \in 1 \times
                                     B^A\times 1} \\
    & \cong & \makefam{B^A} \quad \text{ via $\tuple{*,x,*} \mapsto x$.}
  \end{eqnarray*}
  So if either $\Gamma_{\makerub{A}}(B)$ is a set or $\Delta_{\makerub{A}}(\makelitfam{B})$
  is small, then $B^A$ is a set.
\end{proof}

Here is an application:
\begin{proposition} \label{prop:powallgen}
  Let $\catr$ be a rubric on a class $C$.  Suppose that either $C$ has an 
   $\catr$-complete subset, or Powerset or Collection holds.  Then
  $\Gamma_{\catr}$ has an inductive chain and a least prefixpoint.
\end{proposition}
\begin{proof}
  If $C$ has an $\catr$-complete subset, apply \cref{prop:stabord}.  If Powerset holds,
   apply
  \cref{prop:pressmall}(\ref{item:pressmallchain}) using
  \cref{prop:rubfun}(\ref{item:wideclass}).  If Collection holds, apply \ref{prop:collmu}.
\end{proof}

\psubsection{Proving the Injective Set Generation Principles} \label{sect:noac}

In order to prove the set generation principles for a rubric $\catr$, we establish a
relationship between the functions
$\Gamma_{\catr}$ and 
$\Delta_{\catr}$.
\begin{proposition} \label{prop:injrubcomm}
  Let $\catr$ be a rubric on a class $C$.
  The square
    \begin{displaymath}
      \xymatrix{
        \injcfof{C} \ar[rr]^-{\gamfamcatr} \ar[d]_{\mathsf{Range}} & & 
        \classfamof{C} \ar[d]^{\mathsf{Range}} \\
        \subclassof{C} \ar[rr]_-{\Gamma_{\catr}} & & \subclassof{C}
      }
    \end{displaymath} commutes.
\end{proposition}
\begin{proof}
Let  $(M,F)$ be an injective class-family within $C$.  Then
    $\cwise{F}$ is a result-preserving bijection from the class of all
    $(\catr,M,F)$-plates to that of all $\catr$-plates within
    $\rangeof{M,F}$.  So an element $c \smin C$ is the result of an
    $(\catr,M,F)$-plate iff it is the result of an $\catr$-plate within
    $\rangeof{M,F}$.  
\end{proof}

\begin{proposition}  \label{prop:injrub}
  Let $\catr$ be an injective rubric on a class $C$.
  \begin{enumerate}
 \item\label{item:injind} $\Gamma_{\catr}$ has an inductive chain and least prefixpoint.
  \item \label{item:injpresind} For every extended ordinal $\alpha$, the class-family
    $\mu^{\alpha}\Delta_{\catr}$ is injective, and its range is
    $\mu^{\alpha}\Gamma_{\catr}$.
\item \label{item:injrubrange} The class-family $\mu \Delta_{\catr}$ is injective, and its
  range is $\mu \Gamma_{\catr}$. 
  \end{enumerate}
\end{proposition}
\begin{proof}
    Firstly, for every extended ordinal $\alpha$, injectivity of
    $\mu^{\alpha}\Delta_{\catr}$ is proved by induction on $\alpha$,
    using \cref{prop:injpresinj}($\Rightarrow$) for the successor case.
    
   All that remains is to show that $(\rangeof{\mu^{\alpha}\Delta_{\catr}})_{\alpha \in
      \Ord}$ is an inductive chain for $\Gamma_{\catr}$, with supremum
    $\rangeof{\mu^{\subbiginfty} \Delta_{\catr}}$, as part
    (\ref{item:injrubrange}) follows by \cref{prop:stabbig}.

        The zero
    and extended limit requirements are
    obvious.  For the successor requirement, 
    $\rangeof{\mu^{\mathsf{S}\alpha} \Delta_{\catr}}$ is
      $\rangeof{\Gamma_{\catr} \mu^{\alpha} \Delta_{\catr}}$, which is
      $\Gamma_{\catr} \rangeof{\mu^{\alpha} \Delta_{\catr}}$ by \cref{prop:injrubcomm}.
\end{proof}

\begin{corollary} \label{cor:injrubset}
  Let $\catr$ be an injective rubric on a class $C$.  If
  $\derivsof{\catr}$ is a set, then $\catr$ generates a subset of $C$.
\end{corollary}

We arrive at our main result:
\begin{theorem} \label{prop:injequiv} \hfill
  \begin{enumerate} 
   \item \label{item:allwideequiv} Wide Infinity, Wide Derivation Set and Injective Wide
    \setgen{} are equivalent.
  \item \label{item:allbroadequiv} \full{} Broad Infinity, Broad Derivation Set and
    Injective Broad \setgen{} are equivalent.
   \end{enumerate} 
\end{theorem}
\begin{proof} 
  Follows from~\cref{prop:implderiv,prop:setgeninf,cor:injrubset}.
\end{proof}

\psubsection{Proving the Set Generation Principles, Assuming AC} \label{sect:rubac}

We again present the relationship between $\Gamma_{\catr}$ and
$\Delta_{\catr}$, this time assuming AC and ignoring injectivity.
\begin{proposition} \label{prop:useaccomm} \assumi{AC}
  Let $\catr$ be a rubric on a class $C$.
  \begin{enumerate}
  \item \label{item:aconlycomm} The square
       \begin{displaymath}
         \xymatrix{
           \famof{C} \ar[rr]^-{\Delta_{\catr}} \ar[d]_{\mathsf{Range}} & & 
           \classfamof{C} \ar[d]^{\mathsf{Range}} \\
           \psetset {C} \ar[rr]_-{\Gamma_{\catr}} & & \subclassof{C}
         }
       \end{displaymath}
       commutes.
   \item \label{item:accollcomm} If Collection holds, then the square
       \begin{displaymath}
         \xymatrix{
           \classfamof{C} \ar[rr]^-{\Delta_{\catr}} \ar[d]_{\mathsf{Range}} & & 
           \classfamof{C} \ar[d]^{\mathsf{Range}} \\
           \subclassof{C} \ar[rr]_-{\Gamma_{\catr}} & & \subclassof{C}
         }
       \end{displaymath}
       commutes.
  \end{enumerate}
\end{proposition}
\begin{proof} \hfill
  \begin{enumerate}
  \item \label{item:aconlycommproof} Let $(M,F)$ be a family within $C$.  Then $\cwise{F}$ is a
    result-preserving surjection from the class of all
    $(\catr,M,F)$-plates to that of all $\catr$-plates within
    $\rangeof{M,F}$.  To see surjectivity in the wide case (the broad
    case is similar), let $w= \tuple{i,[a_k]_{k \in K_i},p}$ be an
    $\catr$-plate within $\rangeof{M,F}$.  By AC and since $M$ is a
    set, we can choose for each $a_k$ an $F$-preimage $x_k \smin M$,
    and then $\tuple{i,[x_k]_{k \in K_i}, p}$ is a
    $\cwise{F}$-preimage of $w$.
  \item \label{item:accollcommproof} Since Collection $+$ AC  
    gives Collective
    Choice---Proposition~\ref{prop:collchoice}(\ref{item:classchoice}$\Leftarrow$)---we
    use the same argument as in part
    (\ref{item:aconlycomm}), except that $(M,F)$ is now a class-family. 
    \qedhere
  \end{enumerate}
\end{proof}

\begin{proposition} \assumi{AC}  \label{prop:chainac}
  Let $\catr$ be a rubric on a class $C$.  Suppose that  either 
  $\derivsof{\catr}$ is a set, or Powerset or Collection holds.
  \begin{enumerate}
  \item \label{item:acind} $\Gamma_{\catr}$ has an inductive chain and a least
    prefixpoint.
  \item \label{item:acpresind} For each extended ordinal $\alpha$, the range of $\mu^{\alpha}
    \Delta_{\catr}$ is $\mu^{\alpha} \Gamma_{\catr}$.
\item \label{item:acrange} The range of $\mu 
    \Delta_{\catr}$ is $\mu \Gamma_{\catr}$. 
  \end{enumerate}
\end{proposition}
\begin{proof}
  If $\derivsof{\catr}$ is a set, then so is $\mu^{\alpha}
  \Delta_{\catr}$, for every extended ordinal $\alpha$.   If Powerset
  holds, then $\mu^{\alpha}\Delta_{\catr}$ is a set for every ordinal
  $\alpha$, by induction on $\alpha$, using \cref{prop:rubfun}(\ref{item:wideclassfam}) for the successor case. 

  All that
  remains is to show
  that $(\rangeof{\mu^{\alpha}\Delta_{\catr}})_{\alpha \in
      \Ord}$ is an inductive chain for $\Gamma_{\catr}$ with supremum
    $\rangeof{\mu^{\subbiginfty} \Delta_{\catr}}$, as part
    (\ref{item:acrange}) follows by \cref{prop:stabbig}.

    The zero
    and extended limit requirements are
    obvious.  For the successor requirement, 
    $\rangeof{\mu^{\mathsf{S}\alpha} \Delta_{\catr}}$ is
      $\rangeof{\Gamma_{\catr} \mu^{\alpha} \Delta_{\catr}}$, which is
      $\Gamma_{\catr} \rangeof{\mu^{\alpha} \Delta_{\catr}}$ by
      \cref{prop:useaccomm}(\ref{item:aconlycomm}) in the case that 
      $\derivsof{\catr}$ is a set or Powerset holds, and by
      \cref{prop:useaccomm}(\ref{item:accollcomm}) in the case that Collection
      holds.
    \end{proof}

    \begin{corollary} \label{cor:acrubset} \assumi{AC}  Let $\catr$ be a rubric on a class
      $C$.  If  $\derivsof{\catr}$ is a set, then $\catr$ generates a subset of $C$.
    \end{corollary}

We arrive at our main result for those who accept AC.
\begin{theorem} \assumi{AC} \label{cor:acequiv} \hfill
  \begin{enumerate}
  \item  \label{item:acequivwide} Wide Infinity, Wide Derivation Set and
    Wide \setgen{} are equivalent.
  \item \label{item:acequivbroad} Broad Infinity, Broad Derivation Set
    and Broad \setgen{} are equivalent.
  \end{enumerate}
\end{theorem}
\begin{proof}
  Follows from~\cref{prop:implderiv,prop:setgeninf,cor:acrubset}.
\end{proof}


\psubsection{WISC Principles} \label{sect:wisc}

Our goal is to improve \cref{cor:acequiv}, by replacing AC
with a weak form of choice called WISC, originally studied
in~\cite{Streicher:wisc,vandenbergmoerdijk:amc}.  
We shall formulate three versions of WISC, using the following notions.
\begin{definition} \label{def:awisc}
  Let $K$ be a set.
  \begin{enumerate}
  \item \label{item:kcover} A \emph{$K$-cover} is a $K$-tuple of inhabited sets.
    More generally, a \emph{$K$-class-cover} is a $K$-tuple of
    inhabited classes.
  \item\label{item:kcoverpi}  For any $K$-class-cover $A$, the surjection $\pi \ccolc \sum A \rightarrow
    K$ sends $\tuple{k,a}$ to $k$.
  \item \label{item:kcovermap} Given $K$-class-covers $A$ and $B$, a \emph{map} $f \ccolc A
    \rightarrow B$  is a $K$-tuple of functions $[f_k \,\colon\, A_k \rightarrow B_k]_{k \in K}$.
  \item \label{item:awisc} A \emph{WISC} for $K$ is a set $\cata$ of $K$-covers that is weakly initial---i.e., for
    any $K$-cover $B$, there is $A \smin
    \cata$ and a map $f\ccolc A \rightarrow B$.
  \end{enumerate}
\end{definition}

Although the definition of WISC does not mention class-covers, only
covers, we note the following result.
\begin{proposition} \label{prop:classcov} \assumi{Collection} Let $K$
  be a set, and 
  $\cata$ a WISC for $K$.  Then, for any
  $K$-class-cover $B$, there is $A \smin \cata$ and a map $f \ccolc A
  \rightarrow B$.
\end{proposition}
\begin{proof}
  Collection yields an element $X \smin \prod_{k \in
    K} \psetinh B_k$.  Since $X$ is a $K$-cover, there is $A \smin
  \cata$ and a map $f \ccolc A
  \rightarrow X$, so we have  $f \ccolc A
  \rightarrow B$.
\end{proof}

We continue without assuming Collection.
\begin{definition} \hfill \label{def:wiscfun}
  Let $\catk$ be a class of sets.   A \emph{WISC function} on
    $\catk$ sends each $K \smin \catk$ to a WISC for $K$.  
\end{definition}


 Now consider the following principles.
\begin{itemize}
\item \emph{Simple WISC}: Every set has a WISC.
\item \emph{Local WISC}: Every set of sets has a WISC function.
\item \emph{Global WISC}: The class $\allsets$ has a WISC function.
\end{itemize}
These principles are related as follows.
\begin{proposition} \hfill \label{prop:wiscprops}
  \begin{enumerate}
  \item \label{item:relwisc} AC implies Global WISC, which implies Local WISC, which is
    equivalent to Simple WISC.
   \item\label{item:zfwisc} {\bf In ZF} the three WISC principles are equivalent.
  \end{enumerate}
\end{proposition}
\begin{proof} \hfill
  \begin{enumerate}
  \item \label{item:relwiscproof} To show that AC implies Global WISC: note that AC is equivalent to $K
    \mapsto \setbr{[1]_{k \in K}}$ being a global WISC function.

    To show that Global WISC implies Local WISC: given a set of sets $\catk$, restrict the global WISC
    function to it.

    To show that  Local WISC implies Simple WISC: for any set $K$, 
    obtain a WISC function for the singleton $\setbr{K}$ and apply
    it to $K$.

    To show that Simple WISC implies Local WISC: let $\catk$ be a set of sets, and
    write $L \eqdef \sum_{K \in \catk} K$.  For each $K \smin \catk$,
    define $\funcok$ to be the functor from the
    category of $L$-covers to that of $K$-covers, sending $[A_l]_{l
      \in L}$ to $[A_{\tuple{K,k}}]_{k \in K}$ and likewise for
    maps.  
  Given an WISC $\cata$ for $L$, define $f$ to be the function sending $K \smin \catk$ to the set of  $K$-covers
  \begin{math}
    \setbr{ \funcok(A) \mid A \smin \catk }
  \end{math}, 
 which is weakly initial by the following argument.  Any $K$-cover $B$
 is equal to $\funcok(C)$, where
 $C$ is the $L$-cover whose   $\tuple{M,k}$-component is $B_k$ if $M
 = K$ and $1$ otherwise.  Weak initiality of $\cata$ gives an $L$-cover $A \smin \cata$ and map
  $g \ccolc A \rightarrow C$, so we obtain $\funcok(A) \in f(K)$ and
  $\funcok(g) \ccolc \funcok(A) \rightarrow \funcok(C) = B$.
  Thus $f$ is a WISC function on $\catk$.
\item \label{item:zfwiscproof} We write $(V_{\alpha})_{\alpha \in \Ord}$ for the cumulative
  hierarchy in the usual way.  Suppose Simple WISC holds. For each set $K$, 
  define $t(K)$ to be the least ordinal $\alpha$ such that the set 
   $(\psetinh V_{\alpha})^K$ of all ``$\alpha$-bounded'' $K$-covers is weakly initial.  Then $K
    \mapsto (\psetinh V_{t(K)})^K$ is a global WISC function.    \qedhere
  \end{enumerate}
\end{proof}

\noindent It has been shown that the theory ZF $+$ WISC
is strictly between ZF and ZFC, provided ZF is
consistent~\cite{Karagila:embedorder,Roberts:wiscfail}.  For 
applications of WISC,
see~\cite{vandenbergmoerdijk:amc,FiorePittsSteenkamp:quotinduct,PittsSteenkamp:constructinit}.

\psubsection{Proving the Set Generation Principles, Assuming WISC} \label{sect:suffwisc}

Our task is to weaken the AC assumption of \cref{cor:acequiv}.
Specifically, we shall prove that Local WISC suffices for
part~(\ref{item:acequivwide}), and Global WISC for part~(\ref{item:acequivbroad}).

First we shall provide some constructions.  For a rubric $\catr$ on a class $C$, and any WISC function $f$ on
$\arsof{\catr}$, we shall construct an ``extended'' rubric $\catr_f$ on
$C$.  This is done as follows, using Proposition~\ref{prop:basicsurj}.
\begin{definition} Let $C$ be a class. \label{def:rubwisc}
  \begin{enumerate}
  \item \label{item:widerulear} Let $\tuple{K,R}$ be a wide rule on $C$.  For any $K$-cover
    $A$, 
    define the wide rule $\tuple{K,R}^A$ consisting of the arity $\sum
    A$ and function $C^{\sum A}
    \rightarrow\famof{C}$ sending $C^{\pi}(x)$ to
    $R (x)$ and
    everything else to the empty family.
  \item \label{item:widerubar} Let $\catr = (\tuple{K_i,R_i})_{i \in I}$ be a wide rubric on
    $C$. For any WISC function $f$ on $\arsof{\catr}$, define
   the wide rubric
   \begin{eqnarray*}
 \catr_f & \eqdef & (\tuple{K_i,R_i}^{A})_{i \in I, A
      \in f(K_i)}
   \end{eqnarray*}
  \item \label{item:broadrulear} Let $\tuple{L,S}$ be a broad rule on $C$.  For any $L$-cover $B$
    and WISC function $f$ on $\arsof{S}$,  
    define the broad rule $\tuple{L,S}_f^B$ consisting of the arity
    $\sum B$ and function $C^{\sum B} \rightarrow
    \allrub{C}$ sending $C^{\pi}(x)$ to
   $S(x)_{\wiscres{f}{S(x)}}$ 
    and
    everything else to the empty rubric.
  \item \label{item:broadrubar} Let $\cats = (\tuple{L_j,S_j})_{j \in J}$ be a broad rubric on
    $C$.  For any WISC function
    $f$ on $\arsof{\cats}$, define the broad rubric
     \begin{eqnarray*}
 \cats_ f & \eqdef & (\tuple{L_j,S_j}^B_{\wiscres{f}{{\arsof{S_j}}}})_{j \in J, B
      \in f(L_j)}
   \end{eqnarray*}
 \end{enumerate}
\end{definition}


 Now we adapt~\cref{prop:useaccomm} as
 follows.
 \begin{proposition} \label{prop:commfromwisc}
   Let $\catr$ be a rubric on a
   class $C$, and $f$ a WISC function on $\arsof{\catr}$.
   \begin{enumerate}
   \item \label{item:commwiscsmall} The square \begin{displaymath}
      \xymatrix{
        \famof{C} \ar[rr]^-{\Delta_{\catr_f}} \ar[d]_{\mathsf{Range}} & & 
        \classfamof{C} \ar[d]^{\mathsf{Range}} \\
        \psetset {C} \ar[rr]_-{\Gamma_{\catr}} & & \subclassof{C}
      }
    \end{displaymath}
    commutes.
   \item \label{item:commwisccoll} If Collection holds, then the square
    \begin{displaymath}
      \xymatrix{
        \classfamof{C} \ar[rr]^-{\Delta_{\catr_f}} \ar[d]_{\mathsf{Range}} & & 
        \classfamof{C} \ar[d]^{\mathsf{Range}} \\
        \subclassof{C} \ar[rr]_-{\Gamma_{\catr}} & & \subclassof{C}
      }
    \end{displaymath}
    commutes.
   \end{enumerate}
 \end{proposition}
 \begin{proof} \hfill
   \begin{enumerate}
   \item \label{item:commwiscsmallproof} We just prove the wide case,
      as the broad case is similar.  Let $(M,F)$ be a family within $C$.     For any $(\catr_f,M,F)$-plate
          \begin{eqnarray*}
m & =  & \tuple{\tuple{i,A},
          [x_{k,a}]_{k \in K_i, a \in A_k}, p}
          \end{eqnarray*}
the tuple $[F(x_{k,a})]_{k \in K_i,  a \in A_k}$ is (uniquely)
expressible  as $C^{\pi}(u)$, and we obtain an $\catr$-plate
        $\cowise{F}(m) \eqdef \tuple{i,u,p} $.

        We see next 
        that $\cowise{F}$ is a result-preserving surjection from the
        class of all  $(\catr_f,M,F)$-plates to that of all $\catr$-plates within
        $\rangeof{M,F}$.  To prove surjectivity, let $n
        = \tuple{i,y,p} $ be an $\catr$-plate within
        $\rangeof{M,F}$.  We obtain a $K_i$-cover $[F^{-1}(y_k)]_{k \in K_i}$, so
        there is $A \smin f(K_i)$ and a $K$-cover map $g \ccolc A
        \rightarrow [F^{-1}(y_k)]_{k \in K_i}$.  We then obtain an
        $(\catr_f,M,F)$-plate
        \begin{eqnarray*}
m & = & \tuple{\tuple{i,A},
          [g_k(a)]_{k \in K_i, a \in A_k}, p}
        \end{eqnarray*}
Since $F(g_k(a)) =
        y_k$ for all $k \smin K_i$ and $a \smin A_k$, we have
        $\cowise{F}(m) = n$.   

So an element $c \smin C$ is the result of an $(\catr_f,M,F)$-plate
iff it is the result of an $\catr$-plates within $\rangeof{M,F}$.
   \item \label{item:commwisccollproof} ISimilar to part (\ref{item:commwiscsmall}), except that we
     use \cref{prop:classcov} and speak of classes rather than sets.
   \qedhere
   \end{enumerate}
 \end{proof}
 
 \begin{proposition} \label{prop:wiscchain}
   Let $\catr$ be a rubric on a
   class $C$, and $f$ a WISC function on $\arsof{\catr}$.  Suppose
   that either $\derivsof{\catr_f}$ is a set, or Powerset or Collection holds.
   \begin{enumerate}
   \item \label{item:wiscchain} $\Gamma_{\catr}$ has an inductive chain and a least
    prefixpoint.
  \item \label{item:wiscpreschain} For each extended ordinal $\alpha$, the range of $\mu^{\alpha}
    \Delta_{\catr_f}$ is $\mu^{\alpha} \Gamma_{\catr}$.
\item\label{item:wiscrange} The range of $\mu 
    \Delta_{\catr_f}$ is $\mu \Gamma_{\catr}$. 
   \end{enumerate}
 \end{proposition}
 \begin{proof}
   Similar to the proof of Proposition~\ref{prop:chainac}, using \cref{prop:commfromwisc} rather
   than \cref{prop:useaccomm}.
 \end{proof}

 \begin{corollary} \label{cor:wiscrubset} 
   Let $\catr$ be a rubric on a
   class $C$, and $f$ a WISC function on $\arsof{\catr}$.   If  $\derivsof{\catr_f}$ is a set, then $\catr$ generates a subset of $C$.
 \end{corollary}
The key question is whether $\arsof{\catr}$ has a WISC function,
which we answer as follows.
\begin{proposition} \label{prop:aritywisc} \hfill
  \begin{enumerate} 
  \item \label{item:localwiscrub} Local WISC is equivalent to the assertion ``For any wide
    rubric $\catr$ on a class $C$, the set $\arsof{\catr}$ has a WISC function.''
  \item \label{item:globalwiscrub} Global WISC is equivalent to the assertion: ``For any broad
    rubric $\catr$ on a class $C$, the class $\arsof{\catr}$ has a WISC function.''
  \end{enumerate}
\end{proposition}
\begin{proof}
  Since ($\Rightarrow$) is obvious, we just prove ($\Leftarrow$).
  \begin{enumerate}
  \item \label{item:localwiscrubproof} Given a set of sets $\catk$, define $\catr$ to be
    the following
    wide rubric on $\emptyset$: it is indexed by $\catk$, and rule $K$ has arity $K$. Then
    $\arsof{\catr} =  \catk$.
  \item \label{item:globalwiscrubproof} Define $\cats$ to be the broad rubric on $\allsets$ consisting
    of a single unary rule, sending $[X]$ to the wide rubric
    consisting of a single $X$-ary rule, sending every tuple to empty
    family.  Then $\arsof{\cats} = \allsets$.
  \qedhere
  \end{enumerate}
\end{proof}

We obtain our main result:
\begin{theorem}  \label{cor:wiscequiv} \hfill
  \begin{enumerate}
  \item \label{item:widewiscequiv} \assumi{Local WISC} Wide Infinity, Wide Derivation Set and
    Wide \setgen{} are equivalent.
  \item \label{item:broadwiscequiv} \assumi{Global WISC} Broad Infinity, Broad Derivation Set and Broad \setgen{} are equivalent.
  \end{enumerate}
\end{theorem}
\begin{proof}
  From Propositions~\ref{prop:implderiv} and \ref{prop:setgeninf}, 
  and~\cref{cor:wiscrubset} using \cref{prop:aritywisc}.
\end{proof}

\ppart{Wide and Broad principles for ordinals} \label{part:widebroadord}


 \psection{From Rubrics to \Supgeneration{}} \label{sect:supgen}


Our next task will be to adapt Wide and Broad \setgen{} into 
similar principles for ordinals.  An extended ordinal  
that is neither 0 nor an successor is called an \emph{extended limit}.  Recall that 
Definition~\ref{def:clcomplclass} gave us the notion of  a class being
closed  or complete.  Here are analogous properties for extended ordinals:
\begin{definition}  \label{def:supclosed} An extended limit $\lambda$ is
  \begin{itemize}
  \item \emph{$K$-\supclosed{}}, for a set $K$, when 
    $\supo_K$ (or equivalently $\ssup_K$)
    restricts to a
    function $[0 \twodots \lambda)^{K} \rightarrow [0 \twodots \lambda)$.  
  \item \emph{$\catk$-\supcomplete{}}, for a class of sets $\catk$, when it is
    $K$-\supclosed{} for all $K \smin \catk$.
  \item \emph{$F$-\supclosed{}}, for a function  $F \ccolc \Ord \rightarrow \allsets$, when it is $F\alpha$-\supclosed{} for all $\alpha < \lambda$.
  \item \emph{$H$-\supcomplete{}}, for a function $H \ccolc \Ord \rightarrow
    \subclassof{\allsets}$, when it is $H\alpha$-\supcomplete{} for all $\alpha
    < \lambda$. 
  \end{itemize}
\end{definition}
\noindent Here are some examples.  
\begin{enumerate}
\item \label{item:constksup} For a class of sets $\catk$, let $\constk$ be the constant
  function $\gamma \mapsto \catk$.   An extended limit is $\constk$-\supcomplete{}
   iff it is $\catk$-\supcomplete{}.
\item \label{item:veesup} For functions $H, H' \ccolc \Ord \rightarrow \subclassof{\allsets}$,
  let $H \vee H'$ be the pointwise union $\gamma \mapsto H(\gamma)
  \cup H'(\gamma)$.  An extended limit is $(H \vee H')$-\supcomplete{} iff it is
  both $H$-\supcomplete{} and $H'$-\supcomplete{}.
\end{enumerate}
Below (\cref{prop:kclosed}) we shall characterize \supclosed{}ness and
\supcomplete{}ness in an explicit way.



\begin{definition}  \label{def:supgen} \hfill
  \begin{enumerate}
  \item \label{item:simplysupgen} For a set $K$, the \emph{simply
      $K$-\supgenerat{}ed} extended limit is the  least $K$-\supclosed{} one.
  \item \label{item:supgen} For a class of sets $\catk$, the
    \emph{$\catk$-\supgenerat{}ed} extended limit is the 
    least $\catk$-\supcomplete{} one.
 \item \label{item:simplysupgenbroad} For a function $F \ccolc \Ord \rightarrow \allsets$, the \emph{simply
     $F$-\supgenerat{}ed}
   extended limit is the least
   $F$-\supclosed{} one.
 \item  \label{item:supgenbroad} For a function  $H \ccolc \Ord \rightarrow
   \subclassof{\allsets}$, the \emph{$H$-\supgenerat{}ed} extended limit 
  is the 
   least $H$-\supcomplete{} one.
  \end{enumerate}
\end{definition}


\noindent This leads to the following principles.
\begin{itemize}
\item \emph{Simple Wide \Supgeneration{}}: Any set simply
  \supgenerat{}es a limit ordinal.
\item \emph{\full{} Wide \Supgeneration{}}: Any set of sets 
  \supgenerat{}es a limit ordinal.
\item \emph{Simple Broad \Supgeneration{}}: Any function $\Ord
  \rightarrow \allsets$ simply \supgenerat{}es a limit ordinal.
\item \emph{\full{} Broad \Supgeneration{}}: Any function  $\Ord \rightarrow
    \psetset \allsets$ \supgenerat{}es a limit ordinal.
 \end{itemize}

\begin{theorem} \hfill \label{thm:supgenequiv}
  \begin{enumerate}
  \item \label{item:widesupequiv} The two forms of Wide \Supgeneration{} are equivalent.
  \item \label{item:broadsupequiv} The two forms of Broad \Supgeneration{} are equivalent.
  \item \label{item:broadwidesup} Full Broad \Supgeneration{} implies Full Wide \Supgeneration{}.
  \end{enumerate}
\end{theorem}
\begin{proof} \hfill
  \begin{enumerate}
  \item \label{item:widesupequivproof} \full{} $\Rightarrow$ Simple is obvious.  For the converse, we
    first note that, for a set of sets $\catk$, any extended limit 
    $\lambda$ that is 
    $\sum_{K \in \catk} K$-\supclosed{}  is
    $\catk$-\supcomplete{}.  That is because, for $K \smin \catk$ and
    $p \smin [0\twodots \lambda)^{K}$, we have
    \begin{eqnarray*}
      \bigvee_{k \in K} p_k & = & \bigvee_{\tuple{L,K} \in \sum_{K \in \catk}
                                  K} \left\{
                                  \begin{array}{ll}
                                    p_k  & \text{ $(L=K)$} \\
                                    0 & \text{ otherwise}
                                  \end{array} \right. \\
                            & < & \lambda
    \end{eqnarray*}
 \item  \label{item:broadsupequivproof} \full{} $\Rightarrow$ Simple is obvious.  For the converse: for a function
    $H \ccolc \Ord \rightarrow \psetset \allsets$, any extended limit 
    that is  $(\beta \mapsto \sum_{K \in H\beta}K)$-\supclosed{} is
    $H$-\supcomplete{}, as before.
  \item \label{item:broadwidesupproof} Given a set of sets $\catk$, the $\constk$-\supgenerat{}ed
    extended limit is $\catk$-\supgenerat{}ed.
    \qedhere
  \end{enumerate}
\end{proof}

Now we give the relationship between \supgenerat{}ion and set generation.
\begin{theorem}  \hfill \label{prop:setsup}
  \begin{enumerate}
  \item \label{item:widesetsup} Wide \setgen{} is equivalent to Powerset $+$ Full Wide \Supgeneration{}.
  \item \label{item:broadsetsup} Broad \setgen{} is equivalent to
    Powerset $+$ Full Broad \Supgeneration{}.
  \end{enumerate}
\end{theorem}
\begin{proof} \hfill
  \begin{enumerate}
  \item \label{item:widesetsupproof} For ($\Rightarrow$), we have Powerset by \cref{prop:powinfty} and
    \cref{prop:setgeninf}(\ref{item:widesginf}).  To show Full Wide \Supgeneration{}, let $\catk$ be a set of sets.
    A $\catk$-\supcomplete{} ordinal is an $\catr$-inductive subset of
    $\Ord$, where the wide rubric $\catr$ on $\Ord$ consists of the
    following.
    \begin{itemize}
    \item The unary rule sending $[\alpha]$ to
      $(\beta)_{\beta < \alpha}$.  (Note that a set is closed under
      this rule iff it is transitive, i.e., an ordinal.)
    \item For each $K \smin \catk$, a $K$-ary rule sending
      $[\alpha_k]_{k \in K}$ to $(\bigvee_{k \in K} \alpha_k)$.
    \end{itemize}
For ($\Leftarrow$),  let $\catr = (\tuple{K_i,R_i})_{i \in I}$ be a wide
    rubric on $C$.  We show that the inductive chain of
    $\Gamma_{\catr}$ (which preserves smallness by Exponentiation and
    \cref{prop:rubfun}(\ref{item:wideclass})) stabilizes
    at an ordinal $\alpha$.  Define the set of sets
    \begin{eqnarray*}
      \catk & \eqdef & \setbr{K_i \mid i \smin I}
    \end{eqnarray*}
    Let $\alpha$ be a $\catk$-\supcomplete{} limit ordinal.  For any
    $x \smin \mu^{\alpha} \Gamma_{\catr}$, put $\overline{x}$ for the
    unique $\beta < \alpha$ such that
    $x \in \mu^{\mathsf{S}\beta} \Gamma_{\catr} \setminus \mu^{\beta}
    \Gamma_{\catr}$.  Given an $\catr$-plate
    \begin{eqnarray*}
      w & = & \tuple{i,x, p}
    \end{eqnarray*}
    within $\mu^{\alpha} \Gamma_{\catr}$, put
    \begin{eqnarray*}
      \beta & \eqdef & \ssup_{l \in L_j}\overline{x_l}  
    \end{eqnarray*}
    Since $\alpha$ is $K_i$-\supclosed{}, and
    $\rnkof{y_k}{\Gamma_{\catr}} < \alpha$ for all $k \smin K_i$, we
    obtain $\beta < \alpha$.  Since $w$ is an $\catr$-plate within
    $\mu^{\beta} \Gamma_{\catr}$, its result is in
    $\mu^{\succof{\beta}} \Gamma_{\catr}$, which is included in
    $\mu^{\alpha} \Gamma_{\catr}$ since $\succof{\beta} < \alpha$.

\item \label{item:broadsetsupproof}  For ($\Rightarrow$), to show Full Broad \Supgeneration{}, let
    $H \ccolc \Ord \rightarrow \psetset \allsets$ be a function.  An
    $H$-\supcomplete{} ordinal is an $\catr$-inductive subset of
    $\Ord$, where the broad rubric $\catr$ on $\Ord$ consists of the
    following.
    \begin{itemize}
    \item The nullary rule returning the wide rubric consisting of
      just the unary rule sending $[\alpha]$ to
      $(\beta)_{\beta < \alpha}$.  (Note that a set is closed under
      this rule iff it is transitive, i.e., an ordinal.)
    \item The unary rule sending $[\alpha]$ to the wide rubric
      consisting of, for each $K \smin H\alpha$, the $K$-ary rule
      sending $[\alpha_k]_{k \in K}$ to
      $(\bigvee_{k \in K} \alpha_k)$.
    \end{itemize}
For ($\Leftarrow$),  let $\cats = (\tuple{L_j,S_j})_{j \in J}$ be a broad
rubric on $C$. We show that the inductive chain of
    $\Gamma_{\cats}$ (which preserves smallness by Exponentiation and
    \cref{prop:rubfun}(\ref{item:wideclass})) stabilizes
    at an ordinal $\alpha$. Define the set 
    \begin{eqnarray*}
      \catl & \eqdef & \setbr{L_j \mid j \smin J}
    \end{eqnarray*}
    and the function $H \ccolc \Ord \rightarrow \psetset \allsets$
    sending $\beta$ to
    \begin{displaymath}
      \bigcup_{j \in J}  \bigcup_{\substack{y \in (\mu^{\beta}\Gamma_{\cats})^{L_j}
          \\ S_j(y) = (\tuple{K_i,R_i})_{i \in I}}} \setbr{K_i \mid i \smin I}  
    \end{displaymath}
    Let $\alpha$ be a limit ordinal that is
    $(\constl \vee H)$-\supcomplete{}---i.e., both
    $\catl$-\supcomplete{} and $H$-\supcomplete{}.  For any
    $x \smin \mu^{\alpha} \Gamma_{\cats}$, put $\overline{x}$ for the
    unique $\beta < \alpha$ such that
    $x \in \mu^{\mathsf{S}\beta} \Gamma_{\cats} \setminus \mu^{\beta}
    \Gamma_{\cats}$.  Given an $\cats$-plate
    \begin{eqnarray*}
      w & = & \tuple{j,    y,i, x, p}
    \end{eqnarray*}
    within $\mu^{\alpha} \Gamma_{\cats}$, put
    \begin{eqnarray*}
      \gamma & \eqdef & \ssup_{l \in L_j} {\overline{y_l}}  \\
      \beta & \eqdef & \ssup_{k \in K_i} \overline{x_k}
    \end{eqnarray*}
    Since $\alpha$ is $L_j$-\supclosed{}, and
    $\rnkof{y_l}{\Gamma_{\cats}} < \alpha$ for all $l \smin L_j$, we
    obtain $\gamma < \alpha$.  We have
    $y \in (\mu^{\gamma} \Gamma_{\cats})^{L_j}$ and
    $S_j(y) = (\tuple{K_i,R_i})_{i \in I}$, so $K_i \in H\gamma$, so
    $\alpha$ is $K_i$-\supclosed{}.  Since
    $\rnkof{x_k}{\Gamma_{\cats}} < \alpha$ for all $k \smin K$, we
    obtain $\beta < \alpha$.  Since $w$ is an $\cats$-plate within
    $\mu^{\gamma \vee \beta} \Gamma_{\cats}$, its result is in
    $\mu^{\succof{(\gamma \vee \beta)}} \Gamma_{\cats}$, which is
    included in $\mu^{\alpha} \Gamma_{\cats}$ since
        $\succof{(\gamma \vee \beta)} < \alpha$. \qedhere
  \end{enumerate}
\end{proof}

\psection{Lindenbaum Numbers}  \label{sect:lind}

We interrupt our journey towards Mahlo's principle to give some useful 
constructions that relate sets to ordinals.  First we give some notation:
\begin{definition} \label{def:injsurjcomp}
  Let $A$ and $B$ be sets.
  \begin{enumerate}
  \item \label{item:injcomp} We write $A \injcomp B$ when there is an injection $A \rightarrow B$.
  \item \label{item:surjcomp} We write $A \surjcomp B$ when there is a partial surjection
    $B \rightarrow A$.  Equivalently: when either $A = \emptyset$ or
    there is a surjection $B \rightarrow A$.
  \end{enumerate}
\end{definition}
\noindent Thus $A \injcomp B$ implies $A \surjcomp B$, and conversely
if  AC holds or $B$ is
well-orderable.

\begin{definition} \label{def:hartlind}
  Let $K$ be a set.
  \begin{enumerate}
  \item \label{item:hart} Define $R$ to be the class of all pairs $(X,<)$, consisting of
    a subset $X$ of $K$, and a well-order $<$ on $X$.   Then we obtain a
    lower class
  \begin{displaymath}
      \setbr{\ordtype{(X,<)} \mid (X,<) \smin R}
    \end{displaymath}
    whose strict supremum is denoted $\aleph(K)$.
   \item \label{item:lind} Let $S$ be the class of all triples $(X,\sim,<)$, consisting
     of a subset $X$ of $K$, and an equivalence relation $\sim$ on $X$,
    and a well-order $<$ on the set $X/\sim$ of all equivalence
    classes.  Then we obtain a lower class
 \begin{displaymath}
      \setbr{\ordtype{(X/\sim,<)} \mid (X,\sim,<) \smin S}
    \end{displaymath}
       whose strict supremum is denoted $\alestar(K)$.
  \end{enumerate}
\end{definition}
The extended ordinals $\aleph(K)$ and $\alestar(K)$ are called the \emph{Hartogs number} and the
\emph{Lindenbaum number} of $K$,
respectively.\footnote{See~\cite{KaragilaRyanSmith:hartlind} for
  analysis of the range of possibilities.}  Note that $\aleph(K)
\leqslant \alestar(K)$, with equality if  AC holds or $K$ is well-orderable.  For any ordinal $\alpha$, we have $\alpha \injcomp K$ iff
     $\alpha < \aleph(K)$, and $\alpha \surjcomp K$ iff $\alpha
     < \alestar(K)$.  Thus we have  $\aleph(K) \not \injcomp K$ and
     $\alestar(K) \not \surjcomp K$.

 These constructions are often applied to an ordinal $\beta$, giving $\beta < \aleph(\beta) \leqslant \alestar(\beta)$.

For the sake of \cref{sect:blassmahlo} below, we introduce some axioms about Lindenbaum
numbers.  The first is 
\emph{Full Lindenbaum}: For any set $K$, the
 extended ordinal $\alestar(K)$ is an ordinal.
\begin{proposition} \label{prop:implales} \hfill
  \begin{enumerate}
  \item \label{item:psetales} Powerset implies Full Lindenbaum.
  \item \label{item:widelimales} Wide \Supgeneration{} implies Full Lindenbaum.
  \end{enumerate}
\end{proposition}
\begin{proof} \hfill
  \begin{enumerate}
  \item  \label{item:psetalesproof} By Powerset, the classes $R$ and $S$ in Definition~\ref{def:hartlind} are
    sets.
  \item \label{item:widelimalesproof} For a set $K$, let $\lambda$ be a $K$-closed limit ordinal.  If
    $\lambda < \alestar(K)$, then there is a surjection $f \ccolc K
    \rightarrow \lambda$, so $\lambda =
    \sup_{k \in K} f(k) < \lambda$, contradiction.  Thus $\alestar(K)
    \leqslant \lambda$, so $\alestar(K)$ is an ordinal.  \qedhere
  \end{enumerate}
\end{proof}
We divide Full Lindenbaum into two parts:
  \begin{itemize}
    \item \emph{Ordinal Lindenbaum}: For any ordinal
      $\alpha$, the extended ordinal $\alestar(\alpha)$ is an ordinal.
    \item \emph{Relative Lindenbaum}: For any set $K$, there
      is an ordinal $\alpha$ such that $\alestar(K) \subseteq \alestar(\alpha)$.
    \end{itemize}

    \begin{proposition} \label{prop:decompfullales}
      Full Lindenbaum is equivalent to Ordinal Lindenbaum $+$
      Relative Lindenbaum.
    \end{proposition}
    \begin{proof}
       ($\Leftarrow$) is obvious, and clearly Full Lindenbaum implies
    Ordinal Lindenbaum.   To show that it implies Relative 
    Lindenbaum, put $\alpha \eqdef \alestar(K)$ so that $\alestar(K) <
    \alestar(\alpha)$.
    \end{proof}

    Lastly we consider the \emph{Well-orderability} axiom:  Every set is
    well-orderable.  This principle has the following properties:
    \begin{proposition} \label{prop:woprop} \hfill
      \begin{enumerate}
  \item AC $+$ Powerset implies 
    Well-orderability.
\item \label{item:worl} Well-orderability implies AC $+$ Relative Lindenbaum.
  \end{enumerate}
\end{proposition}
\begin{proof}  We prove only Well-orderability
  $\Rightarrow$ Relative Lindenbaum, as the rest is standard.  Given a set $K$, define $\alpha$ to be the
  least 
  order-type of a well-ordering of $K$.  Since $\alpha \cong K$, we
  have  
    $\alestar(K)  = \alestar(\alpha)$. 
\end{proof}

\psection{From \Supgeneration{} to Mahlo's Principle}  \label{sect:mahlo}
\psubsection{Unbounded and Stationary Classes} \label{sect:unbstat}

Now at last, it is time to treat Mahlo's principle; but we approach
it more slowly than in \Cref{sect:broadvsmahlo}.   To begin, we revisit the notions of unbounded and stationary
class from \Cref{sect:regstat}.
\begin{definition} \label{def:cofinal}
  Let $A$ be a set-based well-ordered class.
  \begin{enumerate}
  \item \label{item:cofinal} A subclass $B$ is  \emph{cofinal} when, for all $x \smin
    A$, there is $y \smin B$ such that $y \geqslant x$.  Equivalently: 
    when it has no strict upper bound.
  \item \label{item:strictlycof} A subclass $B$ is \emph{strictly cofinal} when, for all $x \smin
    A$, there is $y \smin B$ such that $y > x$.   Equivalently: when it
    has no upper bound.
  \end{enumerate}
\end{definition}
If $A$ has no greatest element (e.g., when $A = \Ord$), then ``cofinal'' and
``strictly cofinal'' are equivalent, and the word ``unbounded'' is
also used.

We turn next to ordinal functions.
\begin{definition} \label{def:basedclosed}
  An extended limit $\lambda$ is
  \begin{itemize}
   \item \emph{$G$-based}, for a function  $G \ccolc \Ord \rightarrow
    \extord$, when, for all   $\alpha < \lambda$, we have $G(\alpha) \leqslant \lambda$.
  \item \emph{$F$-closed}, for a function $F  \ccolc \Ord \rightarrow
    \Ord$, when, for all $\alpha < \lambda$, we have
    $F(\alpha) < \lambda$.  In short: when $F$ restricts to an endofunction
    on $\lambda$.
  \end{itemize}
\end{definition}

\begin{proposition} \label{prop:basedclosed}
  For a class of limit ordinals $D$, the following are equivalent.
  \begin{enumerate}
  \item \label{item:fbased} For every function $G \ccolc \Ord \rightarrow \Ord$, there is
    a $G$-based limit ordinal in $D$.
  \item \label{itemfclosed} For every function $F \ccolc \Ord \rightarrow \Ord$, there is
    a $F$-closed limit ordinal in $D$.
  \end{enumerate}
\end{proposition}
\begin{proof}
  Since $F$-closed is the same as $\mathsf{S}F$-based and implies $F$-based.
\end{proof}

A class of limit ordinals with these properties is said to be
\emph{stationary}.  It is then unbounded, and, for any function
$F \ccolc \Ord \rightarrow \Ord$, contains stationarily many
$F$-closed elements.  Here is an application:
\begin{proposition} \label{prop:infstat}
  Each of the following is equivalent to Infinity.
  \begin{enumerate}
 \item \label{item:infunb} $\limclass$ is unbounded.
  \item \label{item:infstat} $\limclass$ is stationary.
  \end{enumerate}
\end{proposition}
\begin{proof}
  If Infinity does not hold, then $\limclass$ is empty.

  Assume Infinity.  To show $\limclass$ is stationary, let $F \ccolc
  \Ord \Rightarrow \Ord$.  Define $G \ccolc \Ord \Rightarrow
  \Ord$ sending $\alpha$ to $\ssup_{\beta < \alpha} (\succof{\beta}
  \vee 
  F\beta)$.  We show that $\lambda \eqdef \bigvee_{n \in \nats}
  G^{n}(1)$ is an $F$-closed limit ordinal.  Firstly, $0 < 1 =
  G^{0}(1) \leqslant 
  \lambda$.  If $\beta <
  \lambda$ then there is $n \smin \nats$ such that $\beta <
  G^{n}(1)$, so $\succof{\beta} < G^{n+1}(1) \leqslant \lambda$, and
  likewise $F\beta < \lambda$. 
\end{proof}

\psubsection{Cofinality} \label{sect:cofin}

The treatment of cofinality relies on the following result:
    \begin{proposition} \label{prop:subsetcof}
  For any extended ordinal $\alpha$ and function $f \ccolc [0 \twodots
  \alpha) \rightarrow
  \Ord$, the range of $f$ has a cofinal subclass of order-type
  $\leqslant \alpha$.
\end{proposition}
\begin{proof}
Let $K$ be the class of all $i < \alpha$ such that $f(i)$ is a strict
upper bound of $\setbr{f(j) \mid j < i}$.  We prove by induction on $i
< \alpha$ that there
is $k \leqslant i$ such that $k \smin K$ and $f(k) \geqslant f(i)$, as
follows.
  If $i \in K$, put $k \eqdef i$, and if not, then there is $j < i$ such that
 $f(i) \leqslant f(j)$, and we apply the inductive hypothesis to it.

Thus the range of $f \restriction_{K}$ is cofinal within that of $f$,
and (since $f \restriction_K$ is
strictly monotone) has the same order-type as $K$, which is $\leqslant \alpha$ by \cref{prop:relatewellorder}(\ref{item:deflat}).
\end{proof}

Let $\lambda$ be an extended limit.  The cofinal and strictly cofinal
subclasses of $[0\twodots \lambda)$ are
the same (as stated above), and the order-type of each is an extended limit.   The least
such order-type is called the \emph{cofinality} of $\lambda$, and written
$\cfof{\lambda}$.  Clearly it satisfies $\cfof{\lambda} \leqslant
\lambda$ 
and $\cfof{\cfof{\lambda}} =
\cfof{\lambda}$.

Now we use cofinality to characterize \supclosed{}ness and \supcomplete{}ness{}.
\begin{proposition} \label{prop:kclosed} Let $\lambda$ be an extended limit.  It is
  \begin{enumerate}
  \item \label{item:kclosed} $K$-\supclosed{}, for a set $K$, 
    iff $\alestar(K) \leqslant \cfof{\lambda}$.
  \item     $\catk$-\supcomplete{}, for a class of sets $\catk$, iff $\supo_{K \in \catk} \alestar(K)
    \leqslant \cfof{\lambda}$.
   \item \label{item:fclosed}  $F$-\supclosed{}, for a function $F \ccolc \Ord \rightarrow
     \allsets$, 
     iff $\cfof{\lambda}$
     is $(\alpha \mapsto \alestar (F(\alpha)))$-based.
   \item \label{item:hcomplete} $H$-\supcomplete{}, for
     a function  $H \ccolc \Ord \rightarrow
     \subclassof{\allsets}$,  iff $\cfof{\lambda}$
     is $(\alpha \mapsto \supo_{K \in H(\alpha)} \alestar (K))$-based.
  \end{enumerate}
\end{proposition}
\begin{proof} 
  We prove part~(\ref{item:kclosed}), from which the other parts follow.

  For ($\Rightarrow$), take a  cofinal subclass $B$ of
  $[0 \twodots \lambda)$ with order-type $\cfof{\lambda}$.  If  $\cfof{\lambda} <
  \alestar(K)$, then we have a surjection $K \rightarrow
 [0 \twodots \cfof{\lambda})$, and hence $K \rightarrow B$, so $\lambda$ is
  $B$-\supclosed{}, so $\bigvee_{\beta < B} \beta < \lambda$, a contradiction.  

  For ($\Leftarrow$), the range of any $f \ccolc K \rightarrow
    [0 \twodots \lambda)$ has order-type  in $\alestar(K)$ and
    hence in $\cfof{\lambda}$, so its supremum is $< \lambda$.   
\end{proof}

Where the sets in questions are ordinals, we give a simpler characterization:
\begin{proposition} \label{prop:alphaclosed}
  Let $\lambda$ be an extended limit.  It is
  \begin{enumerate}
  \item \label{item:alphaclosed} 
    $\alpha$-\supclosed{}, for an ordinal $\alpha$, iff $\alpha < \cfof{\lambda}$.
  \item \label{item:alphacompl} $\rho$-\supcomplete{}, for an extended
    ordinal $\rho$, iff $\rho
    \leqslant \cfof{\lambda}$.
   \item \label{item:ordfunclosed} $F$-\supclosed{}, for a function $F \ccolc \Ord \rightarrow \Ord$,  iff $\cfof{\lambda}$ is $F$-closed.
    \item \label{item:ordfuncompl} $G$-\supcomplete{}, for a function $G \ccolc \Ord \rightarrow
      \extord$, iff $\cfof{\lambda}$ is $G$-based.
  \end{enumerate}
\end{proposition}
\begin{proof} Part~(\ref{item:alphaclosed}) is by \cref{prop:subsetcof}, and
  the rest follows.
\end{proof}

\psubsection{Regularity} \label{sect:regularity}

We revisit the notion of regularity from
\Cref{sect:regstat}.  First we note that 
\cref{prop:alphaclosed}(\ref{item:alphacompl})  gives the following:
\begin{corollary} \label{cor:reg}
  Let $\lambda$ be an extended limit.  It is $\lambda$-\supcomplete{} iff $\cfof{\lambda} = \lambda$.
\end{corollary}
We say that $\lambda$ is \emph{regular} when it satisfies these
conditions.  Thus the cofinality of any extended limit is regular.
Here are more ways of obtaining examples:
\begin{proposition} \label{prop:genisreg}
  All of the following are regular.
  \begin{itemize}
  \item The extended limit that is simply \supgenerat{}ed by a set $K$.
  \item The extended limit that is \supgenerat{}ed by a class of sets $\catk$.
  \item The extended limit that is simply \supgenerat{}ed by a function  $F \ccolc
  \Ord \rightarrow \allsets$.
  \item The extended limit that is \supgenerat{}ed a function $H
   \ccolc \Ord \rightarrow \subclassof{\allsets}$.
  \end{itemize}
\end{proposition}
\begin{proof}
We first prove that, for a class of sets $\catk$, a minimal
$\catk$-\supcomplete{} extended limit $\lambda$ is regular.  Fix $\alpha <
  \lambda$ and a tuple $[x_{i}]_{i < \alpha}$ within $\lambda$.  We 
  shall show that $\bigvee_{i < \alpha} x_i < \lambda$.
  For $\beta < \lambda$, put $\upto{\beta} \eqdef \bigvee_{i <
  \alpha \wedge \beta} x_i$.  The class $P \eqdef \setbr{\beta <
\lambda \mid \upto{\beta} < \lambda}$ is a $\catk$-\supcomplete{}
 limit 
$\leqslant \lambda$ by the following reasoning.
\begin{itemize}
\item For $\gamma \leqslant \beta \smin P$ we have $\gamma \in P$,
  since $\upto{\gamma} \leqslant \upto{\beta}$.  So $P$ is an 
  ordinal
  $\leqslant \lambda$.
\item We have $0 \in P$ since $\upto{0} = 0$, and
  for any $\beta \smin P$ we have $\succof{\beta} \in P$, since
  $\upto{\succof{\beta}}$ is $\upto{\beta} \wedge x_{\beta}$ if
  $\beta < \alpha$ and $\upto{\beta}$ otherwise So $P$ is a limit.
\item  For any $K \smin \catk$ and tuple $[\beta_k]_{k \in K}$ within $P$, we
  have $\bigvee_{k \in K} \beta_k \in P$ since
  $\upto{\bigvee_{k \in K} \beta_k} = \bigvee_{k \in K}
  \upto{\beta_k}$.  So $P$ is $\catk$-\supcomplete{}.
\end{itemize}
Minimality of $\lambda$ gives $P =
\lambda$, so $\alpha \in P$,
meaning that $\bigvee_{i < \alpha} x_i = \upto{\alpha} < \lambda$ as required.  

Lastly, for $H \ccolc \Ord \rightarrow \subclassof{\allsets}$, any minimal
$H$-\supcomplete{} limit $\lambda$ is also a minimal $(\bigcup_{\beta < \lambda}
H\beta)$-\supcomplete{} limit, and therefore regular.
\end{proof}

\psubsection{Blass's Axiom and Mahlo's Principle} \label{sect:blassmahlo}

Now we revisit the principles from \Cref{sect:twoprinc}: \emph{Blass's axiom} says that $\regord$ is unbounded,
and \emph{Mahlo's principle}  that $\regord$ is stationary.  We begin
with 
basic consequences.
\begin{proposition} \label{prop:blassordlind} \hfill
  \begin{enumerate}
  \item\label{item:blassordlind} Blass's axiom implies Ordinal
    Lindenbaum $+$ Infinity.
  \item \label{item:mahloblass} Mahlo's principle implies Blass's axiom.
  \end{enumerate}
\end{proposition}
\begin{proof}  \hfill
  \begin{enumerate}
  \item  \label{item:blassordlindproof} Given an ordinal $\alpha$, take a regular limit ordinal 
    $\lambda > \alpha$.  It is is $\alpha$-\supclosed{} by
    \cref{prop:alphaclosed}(\ref{item:alphaclosed}).  We
    show $\alestar(\alpha) \leqslant \lambda$.  For any
    $\beta < \alestar(\alpha)$, we have a partial surjection
    $\alpha \rightarrow \beta$, so $\lambda$ is $\beta$-\supclosed{},
    so $\beta < \lambda$ by
    \cref{prop:alphaclosed}(\ref{item:alphaclosed}).   Infinity holds since $\regord \subseteq \limclass$.  
  \item \label{item:mahloblassproof} Since stationary implies unbounded. \qedhere
 \end{enumerate}
\end{proof}
We arrive at the main result of the section:
\begin{theorem} \label{prop:equivbound} \hfill
  \begin{enumerate}
  \item \label{item:equivboundwide} Wide \Supgeneration{} is equivalent to Blass's axiom $+$
    Relative Lindenbaum.
  \item \label{item:equivboundbroad} Broad \Supgeneration{} is equivalent to Mahlo's principle $+$
    Relative Lindenbaum.
  \end{enumerate}
\end{theorem}
\begin{proof}
  We prove part~(\ref{item:equivboundbroad}), as part~(\ref{item:equivboundwide}) is similar.

  For ($\Rightarrow$), Broad \Supgeneration{} implies Relative Lindenbaum by
  \cref{prop:implales}(\ref{item:widelimales}).  To show that it implies Mahlo's principle,
  let $F \ccolc \Ord \Rightarrow \Ord$.  The simply $F$ \supgenerat{}ed limit ordinal is $F$-\supclosed{}, so by
  \cref{prop:alphaclosed}(\ref{item:ordfunclosed}) it is
  $F$-closed.  It is regular
    by \cref{prop:genisreg}.

    For ($\Leftarrow$), let $F \ccolc \Ord \rightarrow \allsets$.  Define $G \ccolc
     \Ord \rightarrow \Ord$ sending $\alpha$ to the least ordinal
     $\beta$ such that $\alestar(F(\alpha)) \leqslant
     \alestar(\beta)$. Then there is a regular limit ordinal that is
     $G$-closed.  By
     \cref{prop:alphaclosed}(\ref{item:ordfunclosed})  it
     is 
     $G$-\supclosed{}, so by
     \cref{prop:kclosed}(\ref{item:fclosed})  it is
     $F$-\supclosed{}. 
\end{proof}

\begin{corollary} \assumi{Powerset or Well-orderability} \label{cor:psetequiv}
   \begin{enumerate}
  \item \label{item:supblass} Wide \Supgeneration{} is equivalent to Blass's axiom.
  \item  \label{item:supmahlo} Broad \Supgeneration{} is equivalent to Mahlo's principle.
  \end{enumerate}
\end{corollary}
\begin{proof}
  Immediate from  \cref{prop:equivbound}, using
  \cref{prop:implales}(\ref{item:psetales}) and~\ref{prop:woprop}(\ref{item:worl}).
\end{proof}

\psection{The Power of Stationarity} \label{sect:powstat}
\psubsection{Club Classes and Continuous Functions}  \label{sect:clubcont}


Our final task is to develop the traditional theory of stationarity,
in which ``iterated inaccessibles'' of various kinds are obtained from Mahlo's
principle.  Throughout \cref{sect:powstat}, \emph{class} will always mean a class of ordinals, and 
\emph{function} an endofunction on $\Ord$.  We use the following constructions:
\begin{definition} \label{def:prefof} \hfill
  \begin{enumerate}
  \item \label{item:prefof} For any monotone function $H$, we write
    $\prefof{H}$ for the class of all its prefixpoints.
  \item \label{item:intersect} For any family of classes $(C_i)_{i \in I}$, the \emph{intersection} is given by
    \begin{eqnarray*}
      \bigcap_{i \in I} C_i & \eqdef & \setbr{\alpha \smin \Ord \mid
        \forall i \smin I.\,  \alpha \in C_i}
    \end{eqnarray*}
   \item \label{item:supfunc} For any family of monotone functions $(H_i)_{i \in I}$,  the \emph{supremum} is given by
     \begin{eqnarray*}
       \bigvee_{i \in I}H_i \quad \ccolc \quad \alpha & \mapsto & \bigvee_{i
                                                          \in I}
                                                          H_i(\alpha)
       \\
       \text{so that }  \quad \prefof{ \bigvee_{i \in I}H_i}  & = & \bigcap_{i
                                                            \in I} \prefof{H_i}
     \end{eqnarray*}
   \item \label{item:diagint} For any sequence of classes $(C_{\alpha})_{\alpha \in \Ord}$,
     the \emph{diagonal intersection} is given by
    \begin{eqnarray*}
      \Delta_{\alpha \in \Ord} C_{\alpha} & \eqdef & \setbr{\alpha
                                                      \smin \Ord \mid
                                                      \forall \beta <
                                                      \alpha.\, \alpha
                                                      \in C_{\beta}}
    \end{eqnarray*}
    \item \label{item:diagsup} For any sequence of monotone functions $(H_{\alpha})_{\alpha \in
        \Ord}$, the  \emph{diagonal supremum}  is given by 
      \begin{eqnarray*}
        \nabla_{\alpha \in \Ord} H_{\alpha} \quad \ccolc \quad \alpha &
                                                                \mapsto
        & \bigvee_{\beta < \alpha} H(\beta) \\
        \text{so that } \quad \prefof{ \nabla_{\alpha \in \Ord}
                          H_{\alpha}} & = & \Delta_{\alpha \in \Ord} \prefof{H_{\alpha}}
      \end{eqnarray*}
  \end{enumerate}
\end{definition}


\begin{definition} \label{def:cont}
   A function $H$ is
    \emph{continuous} when it is monotone and sends every limit
    ordinal 
    $\lambda$ to  $\bigvee_{\alpha < \lambda} H(\alpha)$.
\end{definition}
  Here are some ways to obtain continuous functions:
\begin{proposition} \label{prop:contfun} \hfill
    \begin{enumerate}
    \item \label{item:constcont} For every ordinal $\alpha$, the function $\constalpha$ is continuous.
   \item \label{item:supcont} For any family of continuous functions $(H_i)_{i \in I}$, the
    supremum 
     $\bigvee_{i \in I} H_i$ is continuous.
   \item \label{item:diagsupcont} For any sequence of continuous functions $(H_{\alpha})_{\alpha \in
        \Ord}$, the diagonal supremum $\nabla_{\alpha \in \Ord}
      H_{\alpha} $ is continuous.
    \end{enumerate}
  \end{proposition}
  \begin{proof} \hfill
    \begin{enumerate}
    \item \label{item:constcontproof} Obvious.
    \item \label{item:supcontproof} Straightforward.
    \item \label{item:diagsupcontproof} Monotonicity is obvious.  For continuity, let $\lambda$ be  a
   limit ordinal. Then
   \begin{eqnarray*}
     (\nabla_{\alpha \in \Ord} H_{\alpha})(\lambda) & = & \bigvee_{\alpha <
                                              \lambda}
                                              H_{\alpha}(\lambda) \\
     & = & \bigvee_{\alpha < \lambda} \bigvee_{\beta < \lambda}
           H_{\alpha}(\beta) \\
     & \leqslant & \bigvee_{\alpha < \lambda} \bigvee_{\beta <
                   \lambda}  H_{\alpha}(\beta \vee \mathsf{S}\alpha) \\
     & \leqslant & \bigvee_{\gamma  <\lambda} \bigvee_{\alpha < \gamma}
                   H_{\alpha}(\gamma) \\
     & = & \bigvee_{\gamma < \lambda} (\nabla_{\alpha \in \Ord}H_{\alpha})(\gamma)
   \end{eqnarray*}  \qedhere
    \end{enumerate} 
  \end{proof}

  \begin{definition} \label{def:closedord}
    Let $C$ be a class.
    \begin{enumerate}
    \item \label{item:limpt} A \emph{limit point} of $C$ is a limit ordinal $\lambda$
      such that $\lambda \cap C$ is unbounded in $\lambda$.  The class
      of all such is written $\limsof{C}$.
    \item \label{item:closedclass} A class $C$ is \emph{closed} when it is  $\limso$-prefixed,
      i.e., contains every limit ordinal $\lambda$ such
  that $\lambda \cap C$ is unbounded in $\lambda$.
    \end{enumerate}
  \end{definition}
     Here are some ways to obtain closed classes:
  \begin{proposition} \label{prop:closedeg} \hfill
    \begin{enumerate}
    \item \label{item:prefcl} For any continuous function $H$,
      the class $\prefof{H}$ is closed.
   \item \label{item:limcl} For any class $C$, the class $\limsof{C}$ is 
     closed.
   \item \label{item:intcl} For a family of closed classes  $(C_i)_{i \in I}$, the
     intersection $ \bigcap_{i \in I} C_i $ is closed.
   \item \label{item:diagcl} For a sequence of closed classes  $(C_{\alpha})_{\alpha \in
        \Ord}$, the diagonal intersection $\Delta_{\alpha \in \Ord} C_{\alpha}$ is closed.
    \end{enumerate}
  \end{proposition}
  \begin{proof} \hfill
    \begin{enumerate}
    \item \label{item:prefclproof} We must show that a limit point $\lambda$ of $\prefof{H}$ is
      in $\prefof{H}$. For any
    $\gamma < \lambda$, there is $\beta \in [\gamma \twodots \lambda) \cap \prefof{H}$, giving $H(\gamma) \leqslant H(\beta) \leqslant
    \beta < \lambda$.  So we have
    \begin{math}
      H(\lambda)  =  \bigvee_{\gamma < \lambda} H(\gamma) 
       \leqslant   \lambda 
    \end{math}
    as required.
    \item \label{item:limclproof} We show that a limit point $\lambda$ of $\limsof{C}$ is
      a limit point of $C$.   For any
    $\alpha < \lambda$, there is $\beta \smin (\alpha \twodots \lambda) \cap \limsof{C}$.  Since $\beta \in \limsof{C}$, there
    is $\gamma  \in [\alpha \twodots \beta) \cap C$.  Thus we have
    $\gamma \in [\alpha \twodots \lambda) \cap C$ as required.

  \item \label{item:intclproof} Since an infimum of prefixpoints is a prefixpoint.
    \item\label{item:diagclproof}  It suffices to show that $\limsof{\Delta_{\alpha \in \Ord}
        C_{\alpha}} \subseteq \Delta_{\alpha \in \Ord}
      \limsof{C_{\alpha}}$.  This means that any limit point $\lambda$ of $\Delta_{\alpha \in \Ord}
        C_{\alpha}$  is, for all $\alpha < \lambda$,  a limit point
      of $C_{\alpha}$.    For any $\beta \smin (\alpha \twodots
      \lambda)$, there is $\gamma \in [\beta \twodots \lambda) \cap
      \Delta_{\alpha \in \Ord} C_{\alpha}$.  Since $\alpha < 
      \beta \leqslant \gamma$, we have $\gamma \in C_{\alpha}$ as
      required.  \qedhere
    \end{enumerate}
  \end{proof}

  Next we consider unbounded classes.  
  \begin{definition} \hfill \label{def:closure}
    \begin{enumerate}
    \item \label{item:closure} A \emph{closure operator} (on $\Ord$) is a function that is
      inflationary and idempotent.
    \item \label{item:unbclose} For an unbounded class $C$, we write $H_C$ for the unique
      closure operator whose range is $C$.  Explicitly, it sends
      $\alpha$ to the least element of $C$ that is $\geqslant \alpha$.
    \end{enumerate}
  \end{definition}
  Thus we have a bijection between the collection of all unbounded
  classes and that of all closure operators. Moreover it is  \emph{dual}---i.e., for unbounded classes $C$ and $D$, we have
  $H_C \leqslant H_D$ iff $D \subseteq C$.
\begin{proposition} \label{prop:closedcont}
  For any unbounded class $C$, the following are equivalent:
  \begin{itemize}
  \item $C$ is closed.
  \item $H_C$ is continuous.
  \end{itemize}
  \end{proposition}
  \begin{proof}
   Firstly, $H_C$ is continuous at every  $\lambda \not\smin \limsof{C}$, since there is $\alpha < \lambda$
    such that $[\alpha \twodots \lambda)$ has no element in $C$, so $H_C$
sends every ordinal in this interval to $H_{C}(\lambda)$.  As for $\lambda \smin \limsof{C}$, we have $\bigvee_{\gamma
      < \lambda} H_C (\gamma) = \lambda$, so 
    $H_C$ is continuous at $\lambda$ iff $\lambda \in C$.
\end{proof}
Thus we have a dual bijection between the collection of all closed unbounded
classes, known as \emph{club classes}, and that of all continuous closure
operators.  
Here are some ways to obtain club classes:
  \begin{proposition} \label{prop:obclub}
    Each of the following is equivalent to Infinity.
    \begin{enumerate}
    \item \label{item:prefclub} For any continuous function $H$,
      the class $\prefof{H}$ is club.
    \item \label{item:limclub} For any unbounded class $C$, the class $\limsof{C}$ is
      club.
   \item \label{item:intclub} For a family of club classes  $(C_i)_{i \in I}$, the
     intersection $ \bigcap_{i \in I} C_i $ is club.
   \item \label{item:diagclub} For a sequence of club classes  $(C_{\alpha})_{\alpha \in
        \Ord}$, the diagonal intersection $\Delta_{\alpha \in \Ord} C_{\alpha}$ is club.
    \end{enumerate}
  \end{proposition}
  \begin{proof}
    Firstly, if Infinity does not hold, then a club class is just
    an unbounded class of natural numbers, and the statements are all false:
    \begin{enumerate}
    \item \label{item:prefclubrefute} $\prefof{\mathsf{S}}$ is empty.
    \item  \label{item:limclubrefute} $\limsof{\Ord}$ is empty.
    \item  \label{item:intclubrefute} Let $C$ be the class of all even numbers and $D$ that of all
      odd numbers.  Each is club, but  $C \cap D$ is empty.
    \item  \label{item:diagclubrefute} For each ordinal $n$, let $C_n$ be the class of ordinals
      $\geqslant n+2$.  Each of these is club, but $\Delta_{n \in \Ord} C_n = \setbr{0}$.
    \end{enumerate}
    Now assume Infinity.  Because of
    \cref{prop:closedeg}, we need only prove unboundedness.
    \begin{enumerate}
    \item \label{item:prefclubproof} Let $\alpha$ be an ordinal.  Form a strictly increasing sequence of ordinals $(x_n)_{n \in
      \nats}$ via $x_0 \eqdef \alpha$ and $x_{n+1} \eqdef \mathsf{S}(x_n) \vee
    H(x_{n})$.  Its supremum $\lambda$ is a limit ordinal and satisfies
    \begin{eqnarray*}
      H (\lambda) & = & \bigvee_{\gamma < \lambda} H(\beta) \\
                  & = & \bigvee_{n \in \nats} H(x_n) \\
      & \leqslant & \bigvee_{n \in \nats} x_{n+1} \text{ (since
                    $H(x_n) \leqslant x_{n+1}$)} \\
      & = & \lambda
    \end{eqnarray*}
    so $\lambda$ is an $H$-prefixpoint $\geqslant \alpha$.
    \item\label{item:limclubproof}  Let $\alpha$ be an ordinal.  Form a strictly increasing sequence of ordinals $(x_n)_{n \in
      \nats}$ via $x_0 \eqdef \alpha$ and $x_{n+1} \eqdef $ the least
    ordinal in $C$ that is greater than $x_{n}$.  Then 
    $\bigvee_{n \in \nats}x_n$ is 
    in $\limsof{C}$ and is $> \alpha$.
  \item \label{item:intclubproof} Since
    \begin{eqnarray*}
       \bigcap_{i \in I} C_i & = & \bigcap_{i \in I} \prefof{H_{C_i}}
      \\
      & = & \prefof{\bigvee_{i \in I} H_{C_i}}
    \end{eqnarray*}
    which is club by 
    part~(\ref{item:prefclub}), using
    \cref{prop:contfun}(\ref{item:supcont}) .
  \item\label{item:diagclubproof}  Similar.  \qedhere
  \end{enumerate}
  \end{proof}
  

  \begin{proposition} \label{prop:equivstat}
    For a class of limit ordinals $D$, the following are equivalent.
    \begin{enumerate}
    \item \label{item:stat} $D$ is stationary.
      \item \label{item:statcont} Every continuous function has a prefixpoint in
      $D$.
    \item  \label{item:statclub} Every club class has an element in $D$. 
   \end{enumerate}
  \end{proposition}
  \begin{proof}
To show that~(\ref{item:stat}) implies~(\ref{item:statcont}), let  $H$ be a continuous function.  Then any
$H$-based limit ordinal is $H$-prefixed.  To show the converse, let $G$
be a function.  Then the function $H \eqdef \nabla_{\alpha \in \Ord}
\mathsf{const}_{G(\alpha)}$ is continuous by
\cref{prop:contfun}, and
a limit ordinal is $G$-based iff it is $H$-prefixed.

To show that~(\ref{item:statcont}) implies~(\ref{item:statclub}): for any
club class $C$, we have $C = \prefof{H_C}$ and $H_C$ is continuous.  To show the converse, we note
that the club class $\Ord$ has an element in $D \subseteq \limclass$.  
So Infinity holds and we can apply \cref{prop:obclub}(\ref{item:prefclub}).
  \end{proof}

  \begin{corollary} \label{cor:statclub}
    Let $D$ be a stationary class of limit ordinals.  The intersection of $D$
    with any club class is stationary.
  \end{corollary}

  \psubsection{Application: Iterated Inaccessibility} \label{sect:iterinacc}
  
In order to formulate iterated inaccessibility, we use the following result.
\begin{proposition} \label{prop:iterstat}
  Let $D$ be a class of limit ordinals.
  \begin{enumerate}
  \item \label{item:iterdef} There is a sequence of classes $(X_{\alpha})_{\alpha \in
      \Ord}$ and class $X_{\subbigginfty}$ uniquely specified by
    \begin{eqnarray*}
      X_{\alpha} & = & D \cap \bigcap_{\beta < \alpha} \limsof{X_{\beta}} \\
      X_{\subbigginfty} & = & D \cap \Delta_{\beta \in \Ord} \limsof{X_{\beta}}
    \end{eqnarray*}
 \item \label{item:iterstat} If $D$ is stationary, then so are all these classes.
    \end{enumerate}
  \end{proposition}
  \begin{proof} \hfill
    \begin{enumerate}
    \item  \label{item:iterdefproof} We cannot define a sequence of
      classes recursively, so we proceed as follows.  For any ordinal $\rho$, we recursively define a sequence 
      $(X^{\rho}_{\alpha})_{\beta \in \Ord}$ of subsets of $\rho$ via
      \begin{eqnarray*}
         X^{\rho}_{\alpha} & = & \rho \cap D \cap \bigcap_{\beta < \alpha} \limsof{X^{\rho}_{\beta}} 
      \end{eqnarray*}
      These sequences are compatible in the sense that, for $\rho \leqslant \sigma$, we have $X^{\rho}_{\alpha} =
      \rho \cap X^{\sigma}_{\alpha}$.  We define
      \begin{eqnarray*}
        X_{\alpha} & \eqdef & \setbr{\rho \in \Ord \mid \rho \smin X^{\mathsf{S}\rho}_{\alpha}}
      \end{eqnarray*}
 and obtain the
required properties.
\item \label{item:iterstatproof} Firstly, since $D$ is stationary, Infinity holds and we can use \cref{prop:obclub}.

  We cannot simply prove $X_{\alpha}$ stationary by induction
  on $\alpha$, as stationarity involves second-order
  quantification.  
  Instead, we prove {unboundedness} of $X_{\alpha}$ by induction
  on $\alpha$, as follows.  For all $\beta < \alpha$, the class
  $X_{\beta}$ is unbounded, so $\limsof{X_{\beta}}$ is club.  So
  $\bigcap_{\beta < \alpha} X_{\beta}$ is club, making $X_{\alpha}$ 
  stationary and hence unbounded.  This completes the induction.

  Next, for any ordinal $\alpha$, we  see (again) that $\bigcap_{\beta <
    \alpha} \limsof{X_{\beta}}$ is club, making $X_{\alpha}$ stationary.  Likewise
  $\nabla_{\beta \in \Ord} \limsof{X_{\beta}}$ is club, making
  $X_{\subbigginfty}$ stationary.   \qedhere
    \end{enumerate}
  \end{proof}


For an application {\bf assuming Powerset $+$ Infinity $+$ AC}, let
$D$ be the class of all inaccessible cardinals.
Then $X_{\alpha}$ is the class of all \emph{$\alpha$-inaccessible} cardinals,
and $X_{\subbigginfty}$ the class of all \emph{hyper-inaccessible} cardinals.
  \cref{prop:iterstat}(\ref{item:iterstat}) tells us that
  these classes are stationary if Broad Infinity holds.  

This construction can be further iterated, giving
hyper-hyper-inaccessible cardinals
and more.  In Carmody's work~\cite{Carmody:killsoftly}, this is
achieved by generalizing the 
 subscripts used in \cref{prop:iterstat} to a system of
``meta-ordinals''.

\paragraph*{Related work} \quad A standard treatment of stationarity
 is
given in~\cite{Jech:settheory}, not for classes
but for 
{subsets of a given limit ordinal}.  So the predicativity issue does not
arise, 
and additional results are obtained, such as Fodor's
pressing-down lemma and Solovay's partitioning theorem.  See~\cite{GitmanHamkinsKaragila:classfodor}
for an analysis of whether Fodor's lemma applies to classes. 

\ppart{Wrapping up} \label{part:wrap}

\psection{Conclusions}  \label{sect:conc}

\psubsection{Summary of Achievements} \label{sect:sumachieve}

We have now established all the relationships in
Figure~\ref{fig:diagsub}.  The main technical achievement was proving the
equivalence (assuming Powerset $+$ AC)  of Simple Broad Infinity and Mahlo's
principle.  The centrepiece is the implication Full
Broad Infinity $\Rightarrow$ Broad Derivation Set, which relies on \cref{prop:genpf} to generate the
$\catr$-derivational class-family.

On the philosophical side, I claim that the notion of $F$-broad
number (for a broad arity $F$) is easily grasped, making Simple Broad
Infinity a plausible axiom scheme.   This is 
for the reader to judge.

On the practical side, we have seen several equivalent principles  
that are convenient for applications.  Specifically:
\begin{itemize}
\item Broad Derivation Set yields the existence of Tarski-style
  universes.
\item Broad \setgen{}  yields the existence of Grothendieck
  universes.
\item Mahlo's principle in the form ``Every club class contains a
  regular limit ordinal'' yields the existence of
  $\alpha$-inacessibles and hyper-inaccessibles.
\end{itemize}
As promised in \Cref{sect:summgoals}, we have developed our results in a setting that allows urelements and
  non-\wellfound{} membership,  proved the sufficiency of (a version of) WISC
  for our main AC-reliant results,  and seen the pattern of resemblance
  between Wide and Broad principles throughout the paper.

  \psubsection{Further Work}  \label{sect:future}
  
Beyond the above contributions, more work remains to be done.  Firstly, there are
unanswered questions, particularly 
about the power of Broad ZF.
\begin{enumerate}
\item \label{item:gitikbroad} By analogy with Gitik's work~\cite{gitik:uncsing}, can it be shown, under some
  consistency hypothesis for large cardinals, that Broad ZF does not prove
  the existence of an uncountable regular limit ordinal?
\item  \label{item:broadblass} Does Broad
   ZF $+$ Blass's axiom prove Mahlo's principle?
\item \label{item:heredbroad} Jech~\cite{Jech:heredcount} showed in ZF that the class of all hereditarily
  countable sets is a set, and his result has been extended to other
  cardinalities~\cite{Diener:transhull,Holmes:heredsmall}.  
   Can a
  stronger version be proved in Broad ZF?  For example, given a
  broad arity $F \ccolc \totall \rightarrow
  \allsets$, let $H(F)$ denote the least class $X$ that contains $\rStart$ and, for any  $x \smin X$ and
   $y \smin X^{F(x)}$, contains
 $\rBuild{x}{\rangeof{y}}$.  This exists by
 \cref{prop:gensub}(\ref{item:gensub}).  Does Broad ZF prove, for
 every broad arity $F$, that $H(F)$ is a
 set?
\end{enumerate}
Everything in this paper has been done in a base theory that---like
ZF---uses classical first-order logic and ignores logical complexity.   But some other versions of set theory 
 use intuitionistic logic and/or restrict the use of logically
complex sentences~\cite{Crosilla:czfizf,Mathias:strengthmac}.  The task of adapting our results to such theories (as far as
possible)  is left to future work.

Lastly, the link between type-theoretic work on
induction-recursion~\cite{DybjerSetzer:indindrec,GhaniHancock:contmonir,Ghanietal:varyir} and the principles in this paper remains to be developed.

\newcommand{\etalchar}[1]{$^{#1}$}
\providecommand{\noopsort}[1]{}



\begin{acks} I thank Asaf Karagila for teaching me
  about Lindenbaum numbers.  I also thank Tom de Jong, for persuading me to
 devote this paper to classical set theory, and to postpone  
 intuitionistic and constructive set theory to future work.
\end{acks}


\begin{thebibliography}{{\noopsort{Haskell}}{Ha}06}

\bibitem[AAG05]{AbbottAltenkirchGhani:containers}
Michael Abbott, Thorsten Altenkirch, and Neil Ghani.
\newblock
  \href{http://www.sciencedirect.com/science/article/pii/S0304397505003373}{Containers:
  constructing strictly positive types}.
\newblock {\em Theoretical Computer Science}, 342(1):3--27, 2005.
\newblock Applied Semantics: Selected Topics.

\bibitem[Acz88]{Aczel:book}
Peter Aczel.
\newblock {\em Non-Well-Founded Sets}.
\newblock Center for the Study of Language and Information, Stanford
  University, 1988.
\newblock CSLI Lecture Notes, Volume 14.

\bibitem[Acz08]{Aczel:relreflect}
Peter Aczel.
\newblock \href{https://doi.org/10.1002/malq.200710035}{The Relation Reflection
  Scheme}.
\newblock {\em Mathematical Logic Quarterly}, 54(1):5--11, 2008.

\bibitem[AR01]{AczelRathjen:notescst}
Peter Aczel and Michael Rathjen.
\newblock \href{https://events.math.unipd.it/3wftop/pdf/AczelRathjen.pdf}{Notes
  on constructive set theory}.
\newblock Technical Report~40, Mittag-Leffler Institute, 2000/2001.

\bibitem[Bar93]{Barr:terminalcoalg}
Michael Barr.
\newblock
  \href{https://www.sciencedirect.com/science/article/pii/0304397593900766}{Terminal
  coalgebras in well-founded set theory}.
\newblock {\em Theoretical Computer Science}, 114(2):299--315, 21~June 1993.

\bibitem[BDL07]{BlassDimitriouLoewe:inaccnoac}
Andreas Blass, Ioanna Dimitriou, and Benedikt L\"{o}we.
\newblock \href{https://doi.org/10.4064/fm194-2-3}{Inaccessible Cardinals
  without the Axiom of Choice}.
\newblock {\em Fundamenta Mathematicae}, 194:179--189, 01 2007.

\bibitem[Bla83]{Blass:initalg}
Andreas Blass.
\newblock
  \href{http://pldml.icm.edu.pl/pldml/element/bwmeta1.element.bwnjournal-article-fmv117i1p16bwm}{Words,
  Free Algebras and Coequalizers}.
\newblock {\em Fundamenta Mathematicae}, 117:117--160, 1983.

\bibitem[Car17]{Carmody:killsoftly}
Erin~Kathryn Carmody.
\newblock Killing them softly: degrees of inaccessible and {Mahlo} cardinals.
\newblock {\em Mathematical Logic Quarterly}, 63(3-4):256--264, 2017.

\bibitem[Cro20]{Crosilla:czfizf}
Laura Crosilla.
\newblock
  \href{https://plato.stanford.edu/archives/sum2020/entries/set-theory-constructive/}{Set
  Theory: Constructive and Intuitionistic {ZF}}.
\newblock In {\em The Stanford Encyclopedia of Philosophy}. Metaphysics
  Research Lab, Stanford University, summer 2020 edition, 2020.

\bibitem[Die92]{Diener:transhull}
Karl-Heinz Diener.
\newblock On the transitive hull of a $\kappa$-narrow relation.
\newblock {\em Mathematical Logic Quarterly}, 38(1):387--398, 1992.

\bibitem[Dow11]{Dowd:somenewaxioms}
Martin Dowd.
\newblock \href{https://ijpam.eu/contents/2011-66-2/1/1.pdf}{Some new axioms
  for set theory}.
\newblock {\em International Journal of Pure and Applied Mathematics},
  66(2):121--136, 2011.

\bibitem[DS06]{DybjerSetzer:indindrec}
Peter Dybjer and Anton Setzer.
\newblock
  \href{http://www.sciencedirect.com/science/article/pii/S1567832605000536}{Indexed
  {induction-recursion}}.
\newblock {\em The Journal of Logic and Algebraic Programming}, 66(1):1--49,
  2006.

\bibitem[FPS22]{FiorePittsSteenkamp:quotinduct}
Marcelo~P. Fiore, Andrew~M. Pitts, and S.~C. Steenkamp.
\newblock \href{https://doi.org/10.46298/lmcs-18(2:15)2022}{Quotients,
  inductive types, and quotient inductive types}.
\newblock {\em {Logical Methods in Computer Science}}, {18}(2), 2022.

\bibitem[GH16]{GhaniHancock:contmonir}
Neil Ghani and Peter Hancock.
\newblock \href{https://doi.org/10.1017/S0960129514000127}{Containers, monads
  and induction recursion}.
\newblock {\em Mathematical Structures in Computer Science}, 26(1):89--113,
  2016.

\bibitem[GHJ16]{GitmanHamkins:nopset}
Victoria Gitman, Joel~David Hamkins, and Thomas~A. Johnstone.
\newblock
  \href{https://ora.ox.ac.uk/objects/uuid:fc1a16c9-b18a-44f3-a568-55e8c78608ed}{What
  is the theory {ZFC} without power set?}
\newblock {\em Mathematical Logic Quarterly}, 62(4-5):391--406, 2016.

\bibitem[GHK21]{GitmanHamkinsKaragila:classfodor}
Victoria Gitman, Joel Hamkins, and Asaf Karagila.
\newblock \href{arxiv:1904.04190}{{Kelley–Morse} set theory does not prove
  the class {Fodor} principle}.
\newblock {\em Fundamenta Mathematicae}, 254, 01 2021.

\bibitem[Git80]{gitik:uncsing}
Moti Gitik.
\newblock \href{http://dx.doi.org/10.1007/BF02760939}{All uncountable cardinals
  can be singular}.
\newblock {\em Israel Journal of Mathematics}, 35(1--2):61--88, 1980.

\bibitem[GMNFS17]{Ghanietal:varyir}
Neil Ghani, Conor McBride, Fredrik Nordvall~Forsberg, and Stephan Spahn.
\newblock \href{https://doi.org/10.4230/LIPIcs.MFCS.2017.63}{Variations on
  inductive-recursive definitions}.
\newblock In {\em Proceedings of the 42nd International Symposium on
  Mathematical Foundations of Computer Science}, Leibniz International
  Proceedings in Informatics. Schloss Dagstuhl, 2017.

\bibitem[Gra78]{grayson:thesis}
Robin~John Grayson.
\newblock {\em
  \href{https://ora.ox.ac.uk/objects/uuid:3a88ef78-7a3e-4b98-83ac-467a00cf3311}{Intuitionistic
  set theory}}.
\newblock PhD thesis, University of Oxford, 1978.

\bibitem[Ham03]{Hamkins:simplemax}
Joel~David Hamkins.
\newblock \href{http://www.jstor.org/stable/4147695}{A Simple Maximality
  Principle}.
\newblock {\em The Journal of Symbolic Logic}, 68(2):527--550, 2003.

\bibitem[{\noopsort{Haskell}}{Ha}06]{Haskell:maybe}
{\noopsort{Haskell}}{Haskell wiki}.
\newblock Maybe, 2006.
\newblock \url{wiki.haskell.org/Maybe}.

\bibitem[HMG{\etalchar{+}}13]{HankMcBrideGhaniMalatestaAltenkirch:smallir}
Peter Hancock, Conor McBride, Neil Ghani, Lorenzo Malatesta, and Thorsten
  Altenkirch.
\newblock Small induction recursion.
\newblock In {\em Proceedings, 11th International Conference, Typed Lambda
  Calculi and Applications}, volume 7941 of {\em Lecture Notes in Computer
  Science}. Springer, 2013.
\newblock
  \href{https://www.cs.nott.ac.uk/~psztxa/publ/tlca13-small-ir.pdf}{Preprint}.

\bibitem[Hol14]{Holmes:heredsmall}
Randall Holmes.
\newblock On hereditarily small sets in {ZF}.
\newblock {\em Mathematical Logic Quarterly}, 60, 05 2014.
\newblock
  \href{https://randall-holmes.github.io/Drafts/hereditary.pdf}{Preprint}.

\bibitem[Jec82]{Jech:heredcount}
Thomas Jech.
\newblock \href{https://doi.org/10.2307/2273380}{On hereditarily countable
  sets}.
\newblock {\em The Journal of Symbolic Logic}, 47(1):43--47, 1982.

\bibitem[Jec03]{Jech:settheory}
Thomas Jech.
\newblock {\em Set Theory, the Third Millennium Edition, Revised and Expanded}.
\newblock Springer, 2003.

\bibitem[Jor70]{Jorgensen:generate}
Murray Jorgensen.
\newblock \href{https://www.jstor.org/stable/2037129}{An equivalent form of
  {L\'evy's} axiom schema}.
\newblock {\em Proceedings of the American Mathematical Society}, 26:651--654,
  04 1970.

\bibitem[Kar14]{Karagila:embedorder}
Asaf Karagila.
\newblock Embedding orders into cardinals with {DC$_{\kappa}$}.
\newblock {\em Fundamenta Mathematicae}, 226:143--156, 2014.
\newblock \href{https://arxiv.org/pdf/1212.4396.pdf}{Preprint}.

\bibitem[Ker65]{Kerkhoff:freealg}
Robert Kerkhoff.
\newblock Eine {Konstruktion} absolut freier {Algebren} ({A} construction of
  absolutely free algebras).
\newblock {\em Mathematische Annalen}, 158:109--112, 1965.

\bibitem[KRS24]{KaragilaRyanSmith:hartlind}
Asaf Karagila and Calliope Ryan-Smith.
\newblock Which pairs of cardinals can be {Hartogs} and {Lindenbaum} numbers of
  a set?
\newblock {\em Fundamenta Mathematicae}, 267:231--241, 2024.
\newblock \href{https://arxiv.org/abs/2309.11409}{Preprint}.

\bibitem[KS10]{KahleSetzer:extendedpredmahlo}
Reinhard Kahle and Anton Setzer.
\newblock \href{https://doi.org/10.1515/9783110324907.315}{An extended
  predicative definition of the {M}ahlo universe}.
\newblock In Ralf Schindler, editor, {\em Ways of Proof Theory}, Ontos Series
  in Mathematical Logic, pages 315--340. Ontos Verlag, Frankfurt (Main),
  Germany, 2010.
\newblock
  \href{http://wwwmath.uni-muenster.de/logik/Personen/rds/pohlersfest.html}{Preprint}.

\bibitem[KW07]{KayeWong:arithset}
Sadie Kaye and Tin~Lok Wong.
\newblock \href{https://doi.org/10.1305/ndjfl/1193667707}{On interpretations of
  arithmetic and set theory}.
\newblock {\em Notre Dame Journal of Formal Logic}, 48(4):497--510, 2007.

\bibitem[L{\'{e}}v60]{Levy:stronginf}
Azriel L{\'{e}}vy.
\newblock \href{https://projecteuclid.org:443/euclid.pjm/1103038638}{Axiom
  schemata of strong infinity in axiomatic set theory}.
\newblock {\em Pacific Journal of Mathematics}, 10(1):223--238, 1960.

\bibitem[Mat77]{Mathias:happyfam}
Adrian~R.D. Mathias.
\newblock \href{https://doi.org/10.1016/0003-4843(77)90006-7}{Happy families}.
\newblock {\em Annals of Mathematical Logic}, 12(1):59--111, 1977.

\bibitem[Mat01]{Mathias:strengthmac}
Adrian~R.D. Mathias.
\newblock The strength of {Mac Lane} set theory.
\newblock {\em Annals of Pure and Applied Logic}, 110(1-3):107--234, 2001.
\newblock \href{https://www.dpmms.cam.ac.uk/~ardm/maclane.pdf}{Preprint}.

\bibitem[May00]{Mayberry:book}
John~P. Mayberry.
\newblock {\em The Foundations of Mathematics in the Theory of Sets}.
\newblock Cambridge University Press, 2000.

\bibitem[ML84]{MartinLoef:inttypetheory}
Per Martin-{L}{\"o}f.
\newblock {\em Intuitionistic type theory}.
\newblock Bibliopolis, Napoli, 1984.

\bibitem[Mon62]{Montague:twocontribs}
Richard Montague.
\newblock Two contributions to the foundation of set theory.
\newblock In {\em Proc.\ International Congress in Logic, Methodology and
  Philosophy of Science (1960)}, pages 94--110. Stanford University Press,
  1962.

\bibitem[MP00]{MoerdijkPalmgren:wellfoundtreecat}
Ieke Moerdijk and Erik Palmgren.
\newblock \href{https://doi.org/10.1016/S0168-0072(00)00012-9}{Wellfounded
  trees in categories}.
\newblock {\em Annals of Pure and Applied Logic}, 104(1):189--218, 2000.

\bibitem[MS78]{MannaShamir:convfun}
Zohar Manna and Adi Shamir.
\newblock
  \href{https://www.sciencedirect.com/science/article/pii/0304397578900336}{The
  convergence of functions to fixedpoints of recursive definitions}.
\newblock {\em Theoretical Computer Science}, 6(2):109--141, 1978.

\bibitem[{nLa}24]{nlab:connectedlimit}
{nLab}.
\newblock \href{https://ncatlab.org/nlab/show/connected+limit}{Connected
  limit}, revision 14, 2024.
\newblock Explanatory note.

\bibitem[PS78]{PareSchumacher:absfamadjfunct}
Robert Par{\'e} and Dietmar Schumacher.
\newblock Abstract families and the adjoint functor theorems.
\newblock In {\em Indexed categories and their applications}, pages 1--125.
  Springer, 1978.

\bibitem[PS21]{PittsSteenkamp:constructinit}
Andrew~M. Pitts and S.~C. Steenkamp.
\newblock
  \href{https://cgi.cse.unsw.edu.au/~eptcs/paper.cgi?ACT2021.7}{Constructing
  initial algebras using inflationary iteration}.
\newblock In Kohei Kishida, editor, {\em Proceedings of the Fourth
  International Conference on Applied Category Theory (ACT 2021), Cambridge,
  United Kingdom}, volume 372 of {\em {EPTCS}}, pages 88--102, 2021.

\bibitem[Rat00]{Rathjen:superjump}
Michael Rathjen.
\newblock The superjump in {Martin-L\"{o}f} type theory.
\newblock In {\em Proceedings, Logic Colloquium 1998}, volume~13 of {\em
  Lecture Notes in Logic}, pages 363--386. Association for Symbolic Logic,
  2000.
\newblock \href{http://www1.maths.leeds.ac.uk/~rathjen/Prag.pdf}{Preprint}.

\bibitem[Rob15]{Roberts:wiscfail}
David~Michael Roberts.
\newblock \href{http://www.jstor.org/stable/43649746}{The Weak Choice Principle
  {WISC} may Fail in the Category of Sets}.
\newblock {\em Studia Logica: An International Journal for Symbolic Logic},
  103(5):1005--1017, 2015.

\bibitem[Set00]{Setzer:mlttonemahlo}
Anton Setzer.
\newblock \href{http://dx.doi.org/10.1007/s001530050140}{Extending
  {M}artin-{L}{\"o}f Type Theory by one {M}ahlo-Universe}.
\newblock {\em Arch. Math. Log.}, 39:155--181, 2000.

\bibitem[S\l58]{Slominski:theorymodinfoprel}
J\'{o}zef S\l{}omi\'{n}ski.
\newblock Theory of models with infinitary operations and relations.
\newblock {\em Bulletin de l'Acad\'{e}mie Polonaise des Sciences}, 6:449--456,
  1958.

\bibitem[SP82]{SmythPlotkin:catrecdom}
Michael Smyth and Gordon~D. Plotkin.
\newblock The category-theoretic solution of recursive domain equations.
\newblock {\em {SIAM Journal on Computing}}, 11:761--783, 1982.

\bibitem[Str05]{Streicher:wisc}
Thomas Streicher.
\newblock
  \href{http://www2.mathematik.tu-darmstadt.de/~streicher/CIZF/rmczfnp.pdf}{Realizability
  models for {CZF} and $\neg${Pow}}.
\newblock Unpublished note, 2005.

\bibitem[Tak69]{Takahashi:inductionset}
{Moto-o} Takahashi.
\newblock
  \href{https://ynu.repo.nii.ac.jp/record/6489/files/YMJ_17_N1_1969_053-059.pdf}{An
  induction principle in set theory {I}}.
\newblock {\em Yokohama Mathematical Journal}, 17:53--59, 1969.

\bibitem[vdB11]{vandenBerg:hab}
Benno van~den Berg.
\newblock
  \href{https://staff.fnwi.uva.nl/b.vandenberg3/Habil/habil.pdf}{Categorical
  semantics of constructive set theory}, 2011.
\newblock Habilitation thesis, Technische Universit\"{a}t Darmstadt.

\bibitem[vdBM14]{vandenbergmoerdijk:amc}
Benno van~den Berg and Ieke Moerdijk.
\newblock \href{https://doi.org/10.1142/S0219061314500056}{The axiom of
  multiple choice and models for constructive set theory}.
\newblock {\em Journal of Mathematical Logic}, 14(01):1450005, 2014.

\bibitem[Wan77]{Wang:largesets}
Hao Wang.
\newblock
  \href{https://link.springer.com/chapter/10.1007/978-94-010-1138-9_17}{Large
  sets}.
\newblock In {\em Logic, Foundations of Mathematics, and Computability Theory},
  pages 309--333. Springer, 1977.

\bibitem[{Wik}23]{wikipedia:urelement}
{Wikipedia}.
\newblock \href{https://en.wikipedia.org/wiki/Urelement}{Urelement}, 2023.
\newblock Accessed 11 May 2024.

\bibitem[{Wik}24]{wikipedia:nonwellfounded}
{Wikipedia}.
\newblock
  \href{https://en.wikipedia.org/wiki/Non-well-founded_set_theory}{Non-well-founded
  set theory}, 2024.
\newblock Accessed 11 May 2024.

\bibitem[Yao23]{yao:thesis}
Bokai Yao.
\newblock {\em \href{https://arxiv.org/abs/2212.13627}{Set theory with
  urelements}}.
\newblock PhD thesis, University of Notre Dame, 2023.

\bibitem[Zer08]{Zermelo:natnumber}
Ernst Zermelo.
\newblock
  \href{https://link.springer.com/article/10.1007/BF01449999}{Untersuchungen
  \"{u}ber die Grundlagen der Mengenlehre} ({Investigations} in the foundations
  of set theory).
\newblock {\em Mathematische Annalen}, 65(2):261--281, 1908.

\end{thebibliography}
\end{document}